\newtheorem{thm}{Theorem}[section]
\newtheorem{cor}[thm]{Corollary}
\newtheorem{lem}[thm]{Lemma}
\newtheorem{prop}[thm]{Proposition}
\theoremstyle{definition}
\newtheorem{defn}[thm]{Definition}
\theoremstyle{remark}
\newtheorem{rem}[thm]{Remark}
\newtheorem{claim}[thm]{Claim}
\numberwithin{equation}{section}
\newcommand{\mint}{\mathop{\int\hspace{-1.05em}{\--}}\nolimits}
\newcommand{\Real}{\mathbb R}
\newcommand{\func}[1]{\ensuremath{\mathrm{#1} \:} }
\newcommand{\dist}[0]{\mathrm{dist}}
\newcommand{\osc}[0]{\mathop{\mathrm{osc}}}
\newcommand{\spt}[0]{\func{spt}}
\newcommand{\Ss}[0]{{\mathbb S}}
\newcommand{\xX}[0]{\mathbf x}
\newcommand{\Bt}[0]{B_{2t}\backslash B_t}
\newcommand{\Bd}[0]{B_{\delta}\backslash B_{\frac \delta 2}}
\title[Compactness results for biharmonic maps]{Compactness results for sequences of approximate biharmonic maps}
\author[C.~Breiner]{Christine~Breiner}
\address[C.~Breiner]{Department of Mathematics, Fordham University, Bronx, NY 10458}
\email{cbreiner@fordham.edu}
\author[T.~Lamm]{Tobias~Lamm}
\address[T.~Lamm]{Institute for Analysis, Karlsruhe Institute of Technology (KIT), Kaiserstr. 89-93, D-76133 Karlsruhe, Germany}
\email{tobias.lamm@kit.edu}
\thanks{The first author was supported in part by NSF grant DMS-1308420 and an AMS-Simons Travel Grant.}
\keywords{Harmonic maps, biharmonic maps, bubbling, energy quantization}
\begin{document}

\maketitle


\begin{abstract}In this article, we prove energy quantization for approximate (intrinsic and extrinsic) biharmonic maps into spheres where the approximate map is in $L \log L$. Moreover, we demonstrate that if the $L\log L$ norm of the approximate maps does not concentrate, the image of the bubbles are connected without necks.
\end{abstract}
\section{Introduction}Critical points to the Dirichlet energy
\[
E(u) :=\frac 12 \int_{\Omega} |D u|^2dx
\]are called \emph{harmonic maps} and the compactness theory for such a sequence in two dimensions is well understood. Let $\Omega \subset \Real^2$ be a bounded domain and $N$ be a smooth, compact Riemannian manifold. For a sequence of harmonic maps $u_k \in W^{1,2}(\Omega, N)$ with uniform energy bounds, Sacks and Uhlenbeck \cite{SacksUhlenbeck} proved that a subsequence $u_k$ converges weakly to a harmonic $u_\infty$ on $\Omega$ and $u_k \to u_\infty$ in $C^\infty(\Omega \backslash \{x_1, \dots, x_\ell\})$ for some finite $\ell$ depending on the energy bound. For each $x_i$, Sacks and Uhlenbeck showed there exist some number of ``bubbles'', maps $\phi_{ij}: \Ss^2 \to N$, that result from appropriate conformal scalings of the sequence $u_k$ near $x_i$. In dimension two $E(u)$ is conformally invariant and thus one can ask whether any energy is lost in the limit. Jost \cite{Jost2DBook} proved that in fact the energy is quantized; there is no unaccounted energy loss:
\[
\lim_{k \to \infty} E(u_k) = E(u_\infty) + \sum_{i=1}^\ell \sum_{j=1}^{\ell_i} E(\phi_{ij}).
\]Parker \cite{Parker} provided the complete description of the $C^0$ limit or ``bubble tree''. In particular, he demonstrated that the images of the limiting map $u_\infty$ and the bubbles $\phi_{ij}$ are connected without necks. Around the same time various authors proved energy quantization and the no-neck property for approximate harmonic maps \cite{DingTian,QingTian,ChenTian,LinWang,WangPS}.

In this paper we are interested in an analogous compactness problem for a scale invariant energy in four dimensions. Let $(M^4, g)$ and $(N^k,h)$ be compact Riemannian manifolds without boundary, with $N^k$ isometrically embedded in some $\Real^n$. Consider the energy functional
\[
E_{ext}(u) := \int_{M}|\Delta u|^2dx
\]for $u \in W^{2,2}(M, N)$ where $\Delta$ is the Laplace-Beltrami operator. Critical points to this functional are called \emph{extrinsic biharmonic maps} and the Euler-Lagrange equation satisfied by such maps is of fourth order. Clearly, this functional depends upon the immersion of $N$ into $\Real^n$. To avoid such a dependence, one may instead consider critical points to the functional
\[
E_{int}(u):= \int_M |(\Delta u)^T|^2 dx
\]where $(\Delta u)^T$ is the projection of $\Delta u$ onto $T_uN$. Critical points to this functional are called \emph{intrinsic biharmonic maps}. The Euler-Lagrange equations satisfied by extrinsic and intrinsic biharmonic maps have been computed (see for instance \cite{Wang2}). We will be interested in approximate critical points.
\begin{defn}
Let $u \in W^{2,2}(B_1, N)$ where $B_1 \subset \Real^4$ and $N$ is a $C^3$ closed submanifold of some $\Real^n$. Let $f \in L\log L(B_1, \Real^n)$. Then $u$ is an \emph{$f$-approximate biharmonic map} if
\[
\Delta^2 u - \Delta(A(u)(D u, D u)) - 2 d^*\langle \Delta u, D P(u)\rangle + \langle \Delta(P(u)), \Delta u\rangle = f.
\]We define $u$ to be an \emph{$f$-approximate intrinsic biharmonic map} if
\begin{align*}
&\Delta^2 u - \Delta(A(u)(D u, D u)) - 2 d^* \langle \Delta u, D P(u)\rangle + \langle \Delta(P(u)), \Delta u\rangle  \\&-P(u)(A(u)(D u, D u)D_u A(u)(D u, D u)) - 2A(u)(D u, D u)A(u)(D u, D P(u))=f.
\end{align*}
Here $A$ is the second fundamental form of $N \hookrightarrow \Real^n$ and $P(u): \Real^{n} \to T_uN$ is the orthogonal projection from $\Real^n$ to the tangent space of $N$ at $u$.
\end{defn}
Recently, Hornung-Moser \cite{HornungMoser}, Laurain-Rivi\`ere \cite{LaurainRiviere}, and Wang-Zheng \cite{WangZheng} determined the energy quantization result for sequences of intrinsic biharmonic maps, approximate intrinsic and extrinsic biharmonic maps, and approximate extrinsic biharmonic maps respectively. (In fact, the result of \cite{LaurainRiviere} applies to a broader class of solutions to scaling invariant variational problems in dimension four.)

As a first result, we demonstrate that when the target manifold is a sphere, the energy quantization result extends to $f$-approximate biharmonic maps with $f \in L \log L$. For the definition of this Banach space, see the appendix.

\begin{thm}\label{bubblethm}
Let $f_k\in L\log L(B_1, \Real^{n+1})$ and $u_k \in W^{2,2}(B_1, \mathbb S^n)$ be a sequence of $f_k$-approximate biharmonic maps with 
\begin{equation}
\label{globalest}
\|D ^2u_k\|_{L^2(B_1)}+\|D u_k\|_{L^4(B_1)}+ \|f_k\|_{L \log L(B_1)} \leq \Lambda < \infty.
\end{equation}
If $u_k\rightharpoonup u$ weakly in $W^{2,2}(B_1,\mathbb S^n)$, there exists $ \{x_1, \dots, x_\ell\} \subset B_1$ such that $u_k \to u$ in $W^{2,2}_{loc} (B_1\backslash \{x_1, \dots, x_\ell\},\mathbb S^n)$.

Moreover, for each $1 \leq i \leq \ell$ there exists $\ell_i \in \mathbb N$ and nontrivial, smooth biharmonic maps $\omega_{ij} \in C^{\infty}(\mathbb R^4, \mathbb S^n)$ with finite energy ($1 \leq j \leq \ell_i$) such that
\[
\lim_{k \to \infty}\int_{B_{r_i}(x_i)} |D^2 u_k|^2 = \int_{B_{r_i}(x_i)}|D^2 u|^2 + \sum_{j=1}^{\ell_i} \int_{\mathbb R^4} |D^2 \omega_{ij}|^2,
\]
\[
\lim_{k \to \infty}\int_{B_{r_i}(x_i)} |D u_k|^4 = \int_{B_{r_i}(x_i)}|D u|^4 + \sum_{j=1}^{\ell_i} \int_{\mathbb R^4} |D \omega_{ij}|^4.
\]Here $r_i = \frac 12 \min_{1 \leq j \leq \ell, j \neq i}\{|x_i-x_j|, \dist(x_i, \partial B_1)\}$.\end{thm}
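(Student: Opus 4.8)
The plan is to run the blow-up scheme of Sacks--Uhlenbeck, Struwe and Lin--Wang in the fourth order setting, in three pieces: an $\eps$-regularity estimate, an induction on the energy that produces the bubbles, and a neck analysis that rules out energy loss. The hypothesis $f_k\in L\log L$, rather than merely $f_k\in L^1$, is used in two essential places. First, $L\log L$ is exactly the integrability for which convolution with the biharmonic kernel in $\Real^4$ returns $f_k$ to the energy class $W^{2,2}$ (an $L^1$ bound would only give $D^2u_k$ in weak $L^2$), so that the $\eps$-regularity estimate closes. Second, a bound in $L\log L$ forces $\{f_k\}$ to be equi-integrable, so that on sets of small measure, and after conformal rescaling, the forcing is negligible; this is what makes the bubbles honestly biharmonic and what kills the forcing term in the neck.

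Step 1 ($\eps$-regularity and the concentration set). I would first prove that there are $\eps_0>0$ and $C$ so that every $f$-approximate (extrinsic or intrinsic) biharmonic map $u:B_{2r}\to\Ss^n$ with $\int_{B_{2r}}(\abs{D^2u}^2+\abs{Du}^4)<\eps_0$ satisfies
\[
\norm{D^2u}_{L^2(B_r)}+\norm{Du}_{L^4(B_r)}\leq C\left(\int_{B_{2r}}(\abs{D^2u}^2+\abs{Du}^4)\right)^{1/2}+C\norm{f}_{L\log L(B_{2r})},
\]
together with higher local bounds when $f$ is more regular. Into the sphere, the cubic and quartic nonlinearities can be written in divergence (conservation-law) form --- the fourth order analogue of Shatah's reformulation of the harmonic map equation into $\Ss^n$ --- which together with Wente/Adams-type estimates and the mapping property $L\log L\to L^2$ of the relevant Riesz potential in dimension four closes the estimate; the additional intrinsic terms are cubic and quartic in $Du$ and are absorbed in the same way. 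A covering argument then shows that the set $\Sigma=\{x_1,\dots,x_\ell\}$ of points $x$ with $\liminf_k\int_{B_r(x)}(\abs{D^2u_k}^2+\abs{Du_k}^4)\geq\eps_0^2$ for all $r>0$ is finite, with $\ell\leq C\Lambda^2/\eps_0^2$, and that off $\Sigma$ the estimate above plus elliptic bootstrap gives uniform local bounds strictly beyond $W^{2,2}$; hence, along a subsequence, $u_k\to u$ in $W^{2,2}_{loc}(B_1\setminus\Sigma)$.

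Step 2 (the bubbles). Fix $x_i\in\Sigma$ and a small ball meeting $\Sigma$ only at $x_i$. By the standard point selection, pick $y_k\to x_i$ and $\lambda_k\downarrow0$ normalizing the concentrating energy and set $v_k(z)=u_k(y_k+\lambda_k z)$; these are $\tilde f_k$-approximate biharmonic maps with $\tilde f_k(z)=\lambda_k^4f_k(y_k+\lambda_k z)$, which are bounded in $L\log L(B_R)$ for each $R$ while, by the Orlicz--H\"older inequality $\norm{\mathbf 1_A}_{\exp L}\leq C/\log(2+\abs{A}^{-1})$, $\norm{\tilde f_k}_{L^1(B_R)}=\int_{B_{\lambda_k R}(y_k)}\abs{f_k}\to0$ as $k\to\infty$. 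Applying Step 1 to $v_k$, we get $v_k\to\omega$ in $W^{2,2}_{loc}(\Real^4\setminus\Sigma')$ for a finite $\Sigma'$, where $\omega$ is nonconstant (the normalization), biharmonic (the forcing drops out in the limit), smooth (bootstrap) and of finite energy (lower semicontinuity plus removability of the finitely many point singularities and of the point at infinity). Iterating this on the regions between consecutive scales --- Struwe's induction on the energy, which drops by at least a fixed amount at each stage --- terminates after $\ell_i$ steps and produces $\omega_{i1},\dots,\omega_{i\ell_i}$.

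Step 3 (no energy in the necks; quantization; the main obstacle). It remains to show that neither the $\abs{D^2u_k}^2$- nor the $\abs{Du_k}^4$-energy is lost on the neck regions --- the degenerating annuli separating $u$ from a bubble, two consecutive bubbles, or a bubble from itself at intermediate scales. On such a neck every dyadic sub-annulus carries energy $<\eps_0$ (by maximality in the point selection), so Step 1 yields the pointwise decay
\[
\abs{z}^2\abs{D^2u_k(z)}+\abs{z}\abs{Du_k(z)}\leq C\left(\int_{B_{2\abs{z}}\setminus B_{\abs{z}/2}}(\abs{D^2u_k}^2+\abs{Du_k}^4)\right)^{1/2}+C\abs{z}\norm{f_k}_{L\log L}
\]
throughout the neck. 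The key ingredient is a Pohozaev-type identity for $f_k$-approximate biharmonic maps on annuli $B_t\setminus B_s$, obtained by testing the equation against $z\cdot Du_k$: it equates the difference of the radial boundary fluxes at $\partial B_t$ and $\partial B_s$ with an interior integral $\int_{B_t\setminus B_s}(z\cdot Du_k)\cdot f_k$ plus curvature terms, and since $z\cdot Du_k$ is uniformly small on the neck the forcing contribution is bounded by $\eps_0\norm{f_k}_{L^1(B_t\setminus B_s)}$, which vanishes as the annuli degenerate because $\{f_k\}$ is equi-integrable. Inserting this into the dyadic differential-inequality/iteration for the angular energy (as in Lin--Wang and Laurain--Rivi\`ere) forces the total neck energy to $0$; the $\abs{Du_k}^4$ part then follows from the $\abs{D^2u_k}^2$ part via the $\eps$-regularity estimate on slightly enlarged necks. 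Summing the body, the bubbles and the vanishing necks gives the two identities. I expect this last step to be the principal difficulty: the fourth order Pohozaev identity carries many more boundary and curvature terms than its second order analogue, so extracting a clean radial/angular splitting and a workable differential inequality is delicate, and the forcing must be controlled over necks that need not have small measure --- precisely where the $L\log L$ bound (equi-integrability), rather than a bare $L^1$ bound, is indispensable, in tandem with the logarithmic gain of the dyadic iteration.
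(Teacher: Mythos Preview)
Your Steps 1 and 2 are broadly in line with the paper's Sections 2--3: the paper proves the needed $\eps$-regularity (Proposition~\ref{thirdderivs}) by exploiting the divergence form \eqref{deltasquaredeq} and splitting $u=B+\phi+\psi$ into a biharmonic part, a part absorbing the nonlinearity, and a part absorbing $f$; the key output is not just a $W^{2,2}$ estimate but a genuine Lorentz improvement $D^3u\in L^{4/3,1}$, $D^2u\in L^{2,1}$, $Du\in L^{4,1}$, which is what drives compactness (via the measure-theoretic Lemma~\ref{measures}/Corollary~\ref{measurecor} and Lemma~\ref{strong2}) and which your write-up should make explicit.

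Your Step 3 is a genuinely different route. The paper does \emph{not} use a Pohozaev identity or an angular-energy differential inequality. Instead it proves that on the neck the weak Lorentz norms $\|Du_k\|_{L^{4,\infty}}$, $\|D^2u_k\|_{L^{2,\infty}}$, $\|D\Delta u_k\|_{L^{4/3,\infty}}$ are small (Lemma~\ref{inftylemma}, via a dyadic decomposition and the pointwise kernel bounds for $D^jG$), while the strong Lorentz norms $L^{4,1}$, $L^{2,1}$, $L^{4/3,1}$ are uniformly bounded by \eqref{epsreg1}; the neck energy then vanishes by the duality interpolation $\|g\|_{L^p}^p\le C\|g\|_{L^{p,\infty}}^{p-1}\|g\|_{L^{p,1}}$ (Proposition~\ref{quantizeprop}). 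Note also that the paper quantizes $\|D\Delta u_k\|_{L^{4/3}}$ as an auxiliary step (hence estimate \eqref{epsreg4}), something your Pohozaev scheme would not obviously require.

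What each buys: your Pohozaev/dyadic-iteration approach is the one used in \cite{HornungMoser,WangZheng,LaurainRiviere} for $f\in L^p$, $p>1$, and is in principle viable here; but as you anticipate, the fourth-order identity generates many boundary terms, and controlling $\int_{\text{neck}}(z\cdot Du_k)\cdot f_k$ when the neck need not have small measure is exactly the delicate point --- you are relying on a logarithmic gain from the iteration to beat the merely $o(1)$ decay of $\|f_k\|_{L^1}$ on shrinking sets. The paper's Lorentz-duality argument avoids this computation entirely: the $L\log L$ hypothesis enters cleanly through the single estimate $\|f\|_{L^1(B_r)}\le C(\log(1/r))^{-1}\|f\|_{L\log L}$ (Lemma~\ref{fL1lemma}), and the whole neck analysis reduces to kernel estimates and interpolation, with no radial/angular splitting at all.
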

As a second result, we demonstrate the no neck property for approximate biharmonic maps with the approximating functions $L\log L$ norm not concentrating. 
\begin{thm}\label{neckthm}
Let $f_k \in L\log L$ such that the $L \log L$ norm does not concentrate. For $u_k$ a sequence of $f_k$-approximate biharmonic maps satisfying \eqref{globalest}, the images of $u$ and the maps $\omega_{ij}$ described above are connected in $\Ss^n$ without necks.
\end{thm}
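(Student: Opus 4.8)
The plan is to establish the no-neck property by the standard neck-analysis strategy: show that on each dyadic annulus $B_{2t}\backslash B_t$ separating the weak limit from a bubble (or separating two bubbles), the relevant scale-invariant quantities decay, and then sum over the dyadic decomposition to conclude that the oscillation of $u_k$ over the whole neck region $B_\delta \backslash B_{\rin}$ tends to zero as one first lets $k\to\infty$, then $\delta\to 0$ and $2^{-k}R/\delta \to 0$. Concretely, after the usual reduction (decompose $B_1$ into finitely many pieces, each containing at most one concentration point, and perform conformal rescalings to separate bubbles), it suffices to work on a single neck $A(\rin, \delta) = B_\delta \backslash B_{\rin}$ on which $u_k$ has no concentration of energy, i.e.\ the energy on every sub-annulus $\Bt \subset A$ is below the $\eps_0$-threshold coming from the small-energy regularity theory for $f$-approximate biharmonic maps (which we may invoke since $f_k \in L\log L$ with non-concentrating norm). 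The goal is to prove
\[
\lim_{\delta\to 0}\ \lim_{K\to\infty}\ \limsup_{k\to\infty}\ \osc_{B_\delta\backslash B_{2^{-K}R}} u_k = 0,
\]
which immediately gives that the images join up continuously with no neck.

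The key steps, in order, are as follows. First, I would record a decay estimate on dyadic annuli: using the $\eps_0$-regularity together with a Pohozaev-type identity for $f$-approximate biharmonic maps into $\Ss^n$ (integrating the equation against $x\cdot Du$ and controlling the $f_k$-term by $\|f_k\|_{L\log L}$ via the $L\log L$–$\exp L$ duality and the fact that $Du_k \in \exp L$ locally once $u_k$ is bounded), one shows the ``radial energy is small'' and hence the full scale-invariant energy $\int_{\Bt}(|D^2u_k|^2+|Du_k|^4)$ on an annulus is controlled by a fixed fraction of the energies on the neighboring annuli plus an error from $f_k$ that sums up. Second, I would iterate this to get exponential decay of the energy on the $j$-th annulus away from both ends of the neck, with the $L\log L$ error terms summable because the non-concentration hypothesis lets us make the total $L\log L$ mass on the neck arbitrarily small. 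Third, from the annular energy decay I would derive pointwise decay of $|Du_k|$: on each annulus $\Bt$ the regularity theory upgrades the small energy to $t\sup_{\Bt}|Du_k| \le C(\text{energy on }B_{4t}\backslash B_{t/2})^{1/2} + (\text{$f_k$ error})$, and summing $\sup_{\Bt}|Du_k|\cdot t$ in a telescoping/Cauchy-Schwarz fashion over the dyadic scales controls $\osc u_k$ over the neck by (geometric series in the decay rate)$^{1/2}$ times the total neck energy, plus the $f_k$ contribution. Finally, letting $k\to\infty$ and then shrinking the neck kills the limit, and combining with the energy quantization of Theorem \ref{bubblethm} (which guarantees no energy hides in the neck) finishes the argument.

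The main obstacle is the first step: obtaining a Pohozaev identity and the attendant annular decay estimate for the \emph{fourth-order} approximate equation with an $L\log L$ right-hand side, controlling the error terms sharply enough that they are summable over infinitely many dyadic scales. The difficulty is twofold: the biharmonic operator produces many lower-order terms (the $A(u)(Du,Du)$, $d^*\langle\Delta u, DP(u)\rangle$, and intrinsic correction terms) that must each be shown to contribute only a fixed fraction of neighboring-annulus energy, and the $f_k$-term $\int_{\Bt} f_k\,(x\cdot Du_k)$ must be estimated by $\|f_k\|_{L\log L(\Bt)}\|x\cdot Du_k\|_{\exp L(\Bt)}$, which requires knowing $Du_k$ lies in $\exp L$ with a norm controlled by the local $W^{2,2}$-energy — this is where the sphere target and the Wente/Coifman–Lions–Meyer–Semmes–type estimates available for $\Ss^n$ are essential, and where the $L\log L$ (rather than $L^p$, $p>1$) hypothesis is exactly borderline. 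I expect the rest — the iteration, the oscillation bound, and the interplay with the bubble-tree construction — to follow the now-standard template in \cite{LinWang,LaurainRiviere}, adapted mutatis mutandis.
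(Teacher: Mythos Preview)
Your approach is genuinely different from the paper's. You propose the classical Pohozaev/three-annulus route (in the spirit of Lin--Wang or Ding--Tian): derive a stationarity-type identity, iterate it to obtain exponential decay of the scale-invariant energy along the dyadic annuli of the neck, and then sum pointwise gradient bounds to control the oscillation. The paper does none of this. Instead it proves a single oscillation lemma (Lemma~\ref{osclemma}) bounding $\osc_{B_{\delta/2}\backslash B_{2t}} u$ directly by quantities already known to vanish on the neck: $\|D^2u\|_{L^2}$, $\|Du\|_{L^4}$, and $\|D\Delta u\|_{L^{4/3}}$ on $B_{2\delta}\backslash B_t$ (these go to zero by the energy quantization of Proposition~\ref{quantizeprop}, which is why the paper insists on quantizing the third-derivative norm via \eqref{epsreg4}), together with $\|f\|_{L\log L(B_{2\delta})}$ (killed by the non-concentration hypothesis). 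The mechanism is a Lorentz-space duality: with $G$ the biharmonic fundamental solution one has $\osc u \lesssim \|DG\|_{L^{4,\infty}}\|D\Delta w\|_{L^{4/3,1}}$ for a suitable cutoff $w$ of $u$, and $\|D\Delta w\|_{L^{4/3,1}}$ is estimated by a Hodge decomposition of the one-form $\beta = D\Delta w\wedge u - \Delta w\wedge Du$, using that the sphere equation forces $d^*\beta = f\wedge u$ in the neck. No decay along the neck is ever proved or needed; the $L^{4/3,1}$ bound on the whole annulus is obtained in one piece.

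The obstacle you flag is real, and it is exactly what the paper's method is built to avoid. Your plan needs the $f_k$-error in the Pohozaev identity to be tame enough to preserve the iteration over infinitely many scales; at the $L\log L$ borderline this is genuinely delicate (indeed the paper notes that even for harmonic maps Zhu's no-neck argument in \cite{Zhu} required slightly more than $L\log L$). Your specific proposal to control $\|x\cdot Du_k\|_{\exp L}$ via Wente/CLMS is not right as stated: those estimates yield $L^\infty$ or BMO control of \emph{solutions} to div--curl systems, not $\exp L$ control of gradients, and in dimension four $D^2u\in L^2$ gives only $Du\in L^4$, which does not embed into $\exp L$. One can instead use $\eps$-regularity to get $t\|Du_k\|_{L^\infty(\Bt)}\le C$ and hence bound the $f_k$-term by $\|f_k\|_{L^1(B_\delta)}$, which is small by Lemma~\ref{fL1lemma}; but whether the many additional boundary and lower-order terms in a fourth-order Pohozaev identity can all be absorbed with a contraction factor strictly below $1/2$ is far from automatic, and you have not indicated how. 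The paper's Lorentz/Hodge approach trades this entire iteration for a single $L^{4/3,1}$ estimate, at the cost of first having to quantize $\|D\Delta u\|_{L^{4/3}}$.
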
In particular, if $f_k \in \phi(L)$, an Orlicz space such that $\lim_{t \to \infty} \frac{\phi(t)}{t \log t} =  \infty$, the theorem holds. For a definition of an Orlicz space, see the appendix.
\begin{rem}
The theorems also hold for $u_k$ a sequence of $f_k$-approximate intrinsic biharmonic maps. We will prove the theorems in detail for $f_k$-approximate biharmonic maps and point out the necessary changes one must make to prove the intrinsic case.
\end{rem}
We consider biharmonic maps into spheres because the symmetry of the target provides structure to the equation that can be exploited to prove higher regularity.  For an $f$-approximate biharmonic map into $\Ss^n$, the structural equations takes the form (see Wang \cite{Wang})
\begin{equation}\label{biharmoniceq}
d^*(D\Delta u \wedge u - \Delta u \wedge Du) =f\wedge u
\end{equation} and for $f$-approximate intrinsic biharmonic $u$
\begin{equation}
d^*(D\Delta u \wedge u - \Delta u \wedge Du+ 2|Du|^2 Du \wedge u) =f\wedge u.
\end{equation}

The structure of the equation for harmonic maps from a compact Riemann surface into $\mathbb S^n$ was determined independently by Chen \cite{Chen} and Shatah \cite{Shatah}. They demonstrated that $u$ satisfies the conservation law
\[
d^*(Du \wedge u) =0.
\]H\'elein \cite{Helein} used the structure of this equation and Wente's inequality \cite{Wente} to determine that any weakly harmonic $u \in W^{1,2}$ was in fact $C^\infty$. 

In a recent paper by Li and Zhu \cite{LiZhu}, the authors use this additional structure to determine energy quantization for approximate harmonic maps.
In their setting the equation takes the form $d^*(Du \wedge u) = \tau \wedge u$ for $\tau\in L \log L$. Our proof of energy quantization is similar in spirit to the work of \cite{LiZhu} and to the recent small energy compactness result of Sharp and Topping \cite{ST}. Of critical importance are the energy estimates we prove in Section \ref{energy_section}. The first estimates, from Proposition \ref{thirdderivs} are used in two ways. First, the $L^p$ estimates of \eqref{epsreg2}, \eqref{epsreg3} provide sufficient control to determine a small energy compactness result away from the bubbles. Second, we use Lorentz space duality to prove energy quantization and thus require uniform bounds on the appropriate Lorentz energies as in \eqref{epsreg1}. In Section \ref{quantize_section} we prove the energy quantization result. We point out that as the oscillation bound contains an energy term of the form $\|D\Delta u_k\|_{L^{\frac 43}}$, we must also prove this energy is quantized. This point justifies the necessity of the estimate \eqref{epsreg4}. We prove the energy quantization result, under the presumption of the occurrence of one bubble, in Proposition \ref{quantizeprop}.

We next use this stronger energy quantization result for maps into spheres to prove a no-neck property. Zhu \cite{Zhu} showed the no-neck property for approximate harmonic maps with $\tau$ in a space essentially between $L^p$ with $p>1$ and $L \log L$. 
For $w$, a cutoff function of the approximate harmonic map $u$, Zhu considers a Hodge decomposition of the one-form $\beta:=Dw \wedge u$. (This is actually a matrix of one-forms but we gloss over that point for now.) He bounds $\|\beta\|_{L^{2,1}}$ by bounding each component of the decomposition and uses this to bound $\|D w\|_{L^{2,1}}$ by $\|D u\|_{L^2}$ plus a norm of the torsion term, $\tau$. Using $\varepsilon$-compactness and a simple duality argument, he shows the oscillation of $u$ is controlled by $\|D w\|_{L^{2,1}}$, which in turn implies the desired result. 

Like Zhu, we prove the no-neck property by demonstrating that the oscillation of an $f$-approximate biharmonic map is controlled by norms that tend to zero in the neck region. Using a duality argument, we first determine that the oscillation of $u$ on an annular region is bounded by quantized energy terms plus a third derivative of a cutoff function $w$. Our main work is in determining an appropriate estimate for $\|D\Delta w\|_{L^{{\frac 43},1}}.$ We determine this bound by considering the one form $\beta = D\Delta w \wedge u - \Delta w \wedge Du$, and we bound $D\Delta w$ by bounding $\beta$ via its Hodge decomposition. In particular, we take advantage of the divergence structure of the equation for biharmonic maps into spheres to show that $\beta$ not only has good $L^{{\frac 43}}$ estimates but in fact has good estimates in $L^{{\frac 43},1}$. This second estimate allows us to prove the necessary oscillation lemma. The proof of the oscillation lemma constitutes the work of Section \ref{oscillation_section}. Coupling the oscillation lemma with energy quantization, we prove Theorem \ref{neckthm} in Section \ref{no_neck_section}.

Finally, the arguments we use require a familiarity with Lorentz spaces and the appropriate embedding theorems relevant in dimension four. In the appendix, we describe the various Banach spaces and collect the necessary embeddings and estimates.

Many steps of the proof require the use of cutoff functions so we set the following notation.
\begin{defn}
Let $\phi \in C^\infty_0(B_2)$ with $\phi\equiv 1$ in $B_1$. For all $r>0$, define $\phi_r(\xX) = \phi(\frac \xX r)$. 
\end{defn}

\emph{Note added in proof:} As we finalized the paper, we noticed a somewhat related preprint posted on the arxiv by Liu and Yin (arXiv:1312.4600v1), in which they claim the no-neck property holds for sequences of biharmonic maps into general targets. Their methods are quite different from ours and we believe our results are of independent interest.

\section{Energy Estimates}\label{energy_section}
To establish strong convergence away from points of energy concentration, we first prove the necessary energy estimates. The small energy compactness result relies on the fact that in both \eqref{epsreg2}, \eqref{epsreg3} there is an extra power of the energy on the right hand side of the inequality. Thus, small energy implies that $\|Du_k\|_{L^4}$ and $\|D^2u_k\|_{L^2}$ must converge to zero on small balls. Measure theory arguments in the next section will then imply strong convergence for these norms to some $Du, D^2u$ respectively.

\begin{prop}\label{thirdderivs}Let $u \in W^{2,2}(B_{2}, \Ss^n)$ be an $f$-approximate (intrinsic) biharmonic map where $f \in L \log L (B_{2}, \Real^{n+1})$.
Then there exists $C >0$ such that
\begin{align}
\|D^3 u\|_{L^{\frac 43,1}(B_1)}&+\|D^2 u\|_{L^{2,1}(B_1)}+\|Du\|_{L^{4,1}(B_1)} \nonumber \\
&\leq C\left( \|D^2u\|^2_{L^{2}(B_{2})}+\|Du\|^2_{L^2(B_{2})}+\|Du\|_{L^2(B_2)}+ \|f\|_{L\log L(B_{2})}\right).\label{epsreg1}
\end{align}
Moreover, there exists $\tilde \varepsilon>0$ such that if
\[
\|D^2u\|_{L^2(B_{2})} + \|Du\|_{L^4(B_{2})} <\tilde \varepsilon
\]then for every $0<r<\frac 12$,
\begin{align}
\|D^2 u\|^2_{L^{2}(B_r)}&\leq Cr^2\| D^2u\|^2_{L^{2}(B_2)}\nonumber \\
&+C\left(\|D^2u\|^4_{L^{2}(B_{2})}+\|Du\|^4_{L^4(B_{2})}+ \|f\|_{L^1(B_2)}^{2}\|f\|_{L\log L(B_2)}\right), \label{epsreg2}
\end{align}
\begin{align}
\|Du\|^4_{L^{4}(B_r)}&\leq Cr^4\| Du\|^4_{L^{2}(B_2)}\nonumber \\
&+C\left(\|D^2u\|^8_{L^{2}(B_{2})}+\|Du\|^8_{L^4(B_{2})}+ \|f\|_{L^1(B_2)}^{3}\|f\|_{L\log L(B_2)}\right) \label{epsreg3}
\end{align}
and
\begin{align}
\|D\Delta u\|^{\frac43}_{L^{\frac43}(B_r)}&\leq Cr^{\frac 43}\| D^2u\|^{\frac43}_{L^{2}(B_2)}\nonumber \\
&+C\left(\|D^2u\|^{\frac83}_{L^{2}(B_{2})}+\|Du\|^{\frac83}_{L^4(B_{2})}+ \|f\|_{L^1(B_2)}^{\frac 13}\|f\|_{L\log L(B_2)}\right). \label{epsreg4}
\end{align}
\end{prop}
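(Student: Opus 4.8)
\emph{Overall plan.} I would push the whole argument through the antisymmetric matrix-valued one-form $\omega:=Du\wedge u$ and its derivatives, rather than inverting $\Delta^2$ on $u$ directly. The point is that the conservation law \eqref{biharmoniceq} and the constraint $|u|=1$ keep every ``source'' either equal to $f\wedge u$ (hence in $L\log L$) or equal to a genuinely bilinear-in-gradients null form; in particular one never meets a bare quadratic term like $|\Delta u|^2$. The tools I would take off the shelf are: (i) the $\Real^4$ mapping properties of the Riesz potentials, $I_1\colon L\log L\to L^{\frac43,1}$, $I_2\colon L\log L\to L^{2,1}$, $I_3\colon L\log L\to L^{4,1}$, together with the weak bounds $I_k\colon L^1\to L^{q,\infty}$ (these are the appendix estimates); (ii) Wente-type estimates for the Hodge Laplacian on a ball, giving $L^{\frac43,1}$/$L^{2,1}$/$L^{4,1}$ control of the exact and co-exact parts of a one-form whose codifferential is such a source and whose differential is a Jacobian; (iii) interior estimates for harmonic and biharmonic functions, which on $B_r$ carry the correct positive powers of $r$; and (iv) the sharp Sobolev embedding $W^{1,2}(\Real^4)\hookrightarrow L^{4,2}$, which is precisely what puts products such as $\Delta u\cdot Du$ and $Du\otimes Du$ into $L^{\frac43,1}$, resp.\ $L^{2,1}$, with only $\|D^2u\|^2_{L^2(B_2)}+\|Du\|^2_{L^2(B_2)}$ on the right.

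\emph{Estimate \eqref{epsreg1}.} From $|u|=1$ one reads off $Du\cdot u=0$, $\Delta u\cdot u=-|Du|^2$, $D^2u\cdot u=-Du\otimes Du$, and so on, which recover $Du,D^2u,D^3u$ pointwise from $\omega,D\omega,D^2\omega$ up to products of lower-order derivatives. Since $d^*\omega=\mp\,\Delta u\wedge u$ and, by \eqref{biharmoniceq} together with the identity $D\Delta u\wedge u-\Delta u\wedge Du=D(\Delta u\wedge u)-2\Delta u\wedge Du$, one gets $\Delta(\Delta u\wedge u)=\mp\bigl(f\wedge u+2\,d^*(\Delta u\wedge Du)\bigr)$, the exact part $a$ of the Hodge decomposition $\omega=da+d^*b+c$ (performed on an intermediate ball, after a cutoff whose commutator errors sit in $B_2\setminus B_1$ and are folded into $c$) solves the \emph{fourth}-order equation $\Delta^2a=\mp\bigl(f\wedge u+2\,d^*(\Delta u\wedge Du)\bigr)$ --- this is the key device, since it avoids inverting $\Delta$ against the not-yet-controlled quantity $\Delta u\wedge u$. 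Then (i) handles the $f\wedge u$ part of $Da\sim I_3(f\wedge u)+I_2(\Delta u\wedge Du)$, and (iv) gives $\|\Delta u\wedge Du\|_{L^{\frac43,1}}\lesssim\|D^2u\|^2_{L^2(B_2)}+\|Du\|^2_{L^2(B_2)}$; the co-exact part is governed by $d\omega=2\,Du\wedge Du$, for which (ii) and (iv) yield $\|d^*b\|_{L^{4,1}}\lesssim\|D^2u\|_{L^2(B_2)}\|Du\|_{L^{4,2}(B_2)}\lesssim\|D^2u\|^2_{L^2(B_2)}+\|Du\|^2_{L^2(B_2)}$; and the harmonic $c$ is controlled in every norm on $B_1$ by $\|c\|_{L^1(B_{3/2})}\lesssim\|\omega\|_{L^1}\lesssim\|Du\|_{L^1(B_2)}$. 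Differentiating the decomposition once and twice --- the sources improving accordingly, e.g.\ $I_2(f\wedge u)$ and $I_1(\Delta u\wedge Du)$ lie in $L^{2,1}$, while $I_1(f\wedge u)$ and a Calder\'on--Zygmund transform of $\Delta u\wedge Du$ lie in $L^{\frac43,1}$ --- and feeding the results into the pointwise recovery formulas gives \eqref{epsreg1}.

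\emph{Estimates \eqref{epsreg2}--\eqref{epsreg4}.} Here I would drop the Lorentz refinement but must extract the $r$-decay. After replacing $u$ by $u-\bar u_{B_2}$ (Poincar\'e) I would split the relevant quantity on a fixed ball into a harmonic (resp.\ biharmonic) part plus a particular solution with homogeneous data. Interior estimates bound the harmonic part: $\|D^j(\text{harmonic part})\|_{L^\infty(B_{1/4})}\lesssim\|u-\bar u_{B_2}\|_{L^1(B_2)}\lesssim\|Du\|_{L^1(B_2)}$, so its $L^p(B_r)$-norm is $\lesssim r^{4/p}\|Du\|_{L^1(B_2)}$, and raising to the relevant power (with $r<\tfrac12$ to spare) produces the $r^2,r^4,r^{\frac43}$ prefactors. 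Elliptic theory bounds the particular solution by $\|D^2u\|_{L^2(B_2)}\|Du\|_{L^4(B_2)}$ plus, for the $f$-term, the interpolation $\|g\|^p_{L^p}\lesssim\|g\|_{L^{p,1}}\|g\|^{p-1}_{L^{p,\infty}}$, which via (i) and the weak bound gives exactly $\|f\|^{p-1}_{L^1(B_2)}\|f\|_{L\log L(B_2)}$ with $p-1\in\{1,3,\tfrac13\}$; raising to the relevant power converts $(\|D^2u\|_{L^2}\|Du\|_{L^4})^p$ into the claimed higher powers of the energies by the arithmetic--geometric inequality.

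\emph{Main difficulty.} I expect two points to require care. First, the compensated-compactness step: one must check that, once $|u|=1$ is used, $d\omega$ and $d^*(\Delta u\wedge Du)$ are honestly of Jacobian/divergence type, so that a Wente estimate applies at the borderline Lorentz scale in dimension four and no top-order derivative is hiding --- this is exactly where the structure of the target sphere is essential. Second, a finiteness/absorption bookkeeping issue: a priori $D^3u$ is only in $L^1_{loc}$, so one must first know the left side of \eqref{epsreg1} is finite before the Hodge-decomposition estimates and the absorption of the $B_2\setminus B_1$ commutators are legitimate; I would arrange this by proving the cruder $L^p$ bound \eqref{epsreg4} first (it closes by pure $L^p$/interpolation arguments) together with a standard bootstrap, or by running the whole scheme on small balls --- where the nonlinear recovery terms, including the cubic $|Du|^2\,Du$ that appears when $D^3u$ is expressed through $D^2\omega$, are genuinely subordinate to the quadratic energies on the right of \eqref{epsreg1} --- and then patching.
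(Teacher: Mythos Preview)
Your plan is sound and shares its engine with the paper --- both rest on the identity $\Delta(\Delta u\wedge u)=2\,d^*(\Delta u\wedge Du)+f\wedge u$ and on the Lorentz mapping properties of the biharmonic Green operator --- but the paper takes a noticeably shorter path. Rather than Hodge-decomposing $\omega=Du\wedge u$ and then algebraically recovering $D^ku$ from $D^{k-1}\omega$, the authors go one step further with the sphere constraint, writing $\Delta u=\Delta u\wedge u-|Du|^2u$ to obtain directly
\[
\Delta^2u=f\wedge u+d^*\bigl(2\,\Delta u\wedge Du-D(u|Du|^2)\bigr),
\]
and then split $u=\phi+\psi+B$ on $B_2$ with $\Delta^2\phi=d^*(\cdots)$, $\Delta^2\psi=f\wedge u$, and $B$ biharmonic. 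The Lorentz bounds on $\phi$ follow from Calder\'on--Zygmund plus exactly the product estimate you call (iv) (noting that $|Du|^2\le|\Delta u|$ since $|u|=1$, so the apparently cubic term $|Du|^3$ is really controlled by $|D^2u|\,|Du|$ --- this also resolves the cubic recovery term you flag at the end); the bounds on $\psi$ are your item (i); and $B$ is handled by the biharmonic mean-value property, giving $\|DB\|_{W^{2,\infty}(B_1)}\lesssim\|DB\|_{L^1(B_{3/2})}$. Your route adds a co-exact piece $b$ (from $d\omega=2\,Du\wedge Du$), a harmonic one-form $c$, and a nonlinear recovery step, all of which are manageable but are bypassed entirely once one decomposes $u$ itself; the ``Wente-type'' language is also slightly off in dimension four, where what is actually used is just $W^{1,2}\hookrightarrow L^{4,2}$ and Lorentz H\"older, not a div-curl/$\mathcal H^1$ gain. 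For \eqref{epsreg2}--\eqref{epsreg4} the two approaches coincide: the biharmonic piece supplies the $r$-power via interior estimates, and the $\psi$-piece uses precisely the interpolation $\|g\|_{L^p}^p\lesssim\|g\|_{L^{p,\infty}}^{p-1}\|g\|_{L^{p,1}}$ together with $\|D^kG\ast f\|_{L^{p,\infty}}\lesssim\|f\|_{L^1}$, as you describe.
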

\begin{rem}
In point of fact, we do not need the full strength of \eqref{epsreg4} in application. We use instead the estimate
\[
\|D\Delta u\|_{L^{\frac 43}(B_r)}^{\frac 43}\leq C\left(\|D^2 u\|_{L^2(B_{8r})}^{\frac 43} + \|Du\|_{L^4(B_{8r})}^{\frac 43}+ \|f\|_{L^1(B_{8r})}^{\frac 13}\|f\|_{L \log L(B_{8r})}\right),
\]which can be immediately proven via the method outlined below.
\end{rem}
\begin{proof}
First, find $v \in W^{1,2}_0(B_2, so(n+1)) \cap W^{2,2}(B_2,so(n+1))$ such that
\[
\Delta v = \Delta u \wedge u.
\]Thus, for each $i,j \in \{1, \dots, n+1\}$, $\Delta v^{ij} = u^j\Delta u^i - u^i\Delta u^j$. 
It follows from equation \eqref{biharmoniceq} that
\[
\Delta^2 v =\Delta (\Delta u \wedge u) = 2d^* (\Delta u \wedge Du)+f\wedge u.
\]
Next we let $\phi \in W^{2,2}_0(B_2,so({n+1})\otimes \Omega^1\Real^4)$ be the solution of
\[
\Delta^2 \phi = d^* \left( 2\Delta u \wedge D u \right).
\] Here $so({n+1})\otimes \Omega^1\Real^4$ denotes the space of $1$-forms tensored with $(n+1)\times (n+1)$-anti-symmetric matrices. 
Using Calder\'on-Zygmund theory coupled with interpolation, and using the estimates from Appendix \ref{estap} we determine that
\begin{equation}\label{phiest1}
\|D^3 \phi\|_{L^{\frac 43,1}(B_2)}+\|D^2 \phi\|_{L^{2,1}(B_2)}+\|D \phi\|_{L^{4,1}(B_2)} \leq c(\|D^2u\|^2_{L^{2}(B_{2})}+\|Du\|^2_{L^2(B_{2})}).
\end{equation}
Moreover, letting $\psi \in W^{2,2}_0(B_2,so({n+1}))$ be the solution of
\[
\Delta^2 \psi = f\wedge u,
\]
we conclude that
\begin{equation}\label{psiest1}
\|D \psi\|_{L^{4,1}(B_2)}+\|D^2 \psi \|_{L^{2,1}(B_2)}+\| D^3 \psi\|_{L^{\frac43,1}(B_2)}  \le c\|f\|_{L\log L(B_{2})}.
\end{equation}

Defining
\begin{align*}
B:=  v -\phi-\psi \label{bih}
\end{align*}
and using the above equation for $v$, we conclude that each $B^{ij}$ is a biharmonic function on $B_2$. Now every biharmonic function satisfies the mean value property
\[
B(x)= c_1 \mint_{B_r(x)} B(y)dy- c_2 \mint_{B_{2r}(x)} B(y) dy,
\]
for every $B_{2r}(x) \subset B_2$ (see e.g. \cite{huilgol}). Hence we estimate
\begin{align*}
\|D^2 B\|_{L^{2,1}(B_{3/2})}&+\|D^3B\|_{L^{\frac 43,1}(B_{3/2})}\\ &\le c\|DB\|_{L^{2}(B_2)}\\
&\le c (\|Dv\|_{L^{2}(B_2)}+\|f\|_{L\log L(B_{2})}+\|D^2 u\|^2_{L^2(B_2)}+ \|Du\|^2_{L^{2}(B_2)}).
\end{align*}Since $v=0$ on $\partial B_2$ we can use the divergence theorem and Cauchy-Schwarz to show that
\[
\int_{B_2} |Dv^{ij}|^2 = -\int_{B_2} v^{ij}\Delta v^{ij} = -\int_{B_2}Dv^{ij} \cdot (Du \wedge u)^{ij} \leq \frac 12 \int_{B_2} |Dv^{ij}|^2 + C \int_{B_2} |Du|^2.
\]
Thus,
\begin{align*}
\|D^2 B\|_{L^{2,1}(B_{3/2})}+&\|D^3B\|_{L^{\frac 43,1}(B_{3/2})}\\&
\quad  \le c\left(\|Du\|_{L^{2}(B_2)}+\|f\|_{L\log L(B_{2})}+\|D^2 u\|^2_{L^2(B_2)}+ \|Du\|^2_{L^{2}(B_2)}\right).
\end{align*}

Now we observe that as $\Delta v = \Delta u \wedge u$,
\[
\Delta u = (\Delta u \wedge u).u + \langle \Delta u, u\rangle u = \Delta v.u - |Du|^2 u
\]where here $\Omega.u$ represents matrix multiplication.
Therefore,
\[
\Delta^2 u = \Delta(\Delta v.u- |Du|^2u) = d^*\left(D\Delta v.u + \Delta v.Du -D(|Du|^2u)\right).
\]To get the second and third derivative estimate in \eqref{epsreg1}, we first observe that
\[
\|D^2v\|_{L^{2,1}(B_{3/2})}+\|D^3 v\|_{L^{\frac 43,1}(B_{3/2})} \leq  c \left(\|Du\|_{L^{2}(B_2)}+\|f\|_{L\log L(B_{2})}+\|D^2 u\|^2_{L^2(B_2)}+ \|Du\|^2_{L^{2}(B_2)}\right).
\]Using the previous estimates and Appendix \ref{estap}, we observe that the one form in the parentheses is in $L^{\frac 43,1}$. Lemma A.3 from \cite{LammRiv} implies that
\begin{align*}
\|D^2u\|_{L^{2,1}(B_1)}+\|D^3 u\|_{L^{\frac 43,1}(B_1)}& \leq c\left( \|D^3 v\|_{L^{\frac 43,1}(B_{3/2})} + \|D^2 v\|^2_{L^{2}(B_2)} + \|D^2u\|^2_{L^2(B_2)} + \|Du\|^2_{L^2(B_2)}\right)\\
& \leq c\left(\|Du\|_{L^{2}(B_2)}+\|f\|_{L\log L(B_{2})}+\|D^2 u\|^2_{L^2(B_2)}+ \|Du\|^2_{L^{2}(B_2)} \right).
\end{align*}
Finally, Sobolev embedding for Lorentz spaces implies that
\[
\|Du\|_{L^{4,1}(B_1)} \leq c\left(\|D^2u\|_{L^{2,1}(B_2)}+\|Du\|_{L^{2,1}(B_2)}\right) \leq c\left(\|D^2u\|_{L^{2,1}(B_2)}+\|Du\|_{L^2(B_2)}\right).
\]Combining this with the previous estimates finishes the proof of \eqref{epsreg1}.

To prove the small energy estimates, we observe that $u$ satisfies (see for instance \cite{LammRiv}, equations 1.4, 1.14)
\begin{equation}\label{ELu}
\Delta^2 u = \Delta(V\cdot D u) + d^*(wD u) + W \cdot D u + f
\end{equation}where 
$V^{ij}=u^iD u^j - u^jD u^i,$ $w^{ij} = -d^*(V^{ij}) -2|D u|^2\delta_{ij},$ 
and $W^{ij} = -D(d^*(V^{ij})) +2 (\Delta u^i D u^j -\Delta u^j D u^i)$. Let $\mathcal M_m$ denote the space of $m\times m$ matrices and $\mathcal M_m \otimes \Omega^k \Real^4$ denote the space of $k$-forms tensored with $m \times m$ matrices. Then $V \in W^{1,2}(B_{2}, \mathcal M_{n+1} \otimes \Omega^1 \Real^4)$, $w \in L^2(B_{2}, \mathcal M_{n+1})$, $W \in W^{-1,2}(B_{2}, \mathcal M_{n+1} \otimes \Omega^1 \Real^4)$.  

Without loss of generality we extend $f$ by zero outside of $B_2$.
The small energy hypothesis implies (see for instance \cite{LammRiv}) that there exist $A\in L^\infty\cap W^{2,2}(B_{1},GL_{n+1}),\tilde B \in W^{1,\frac 43}(B_1,\mathcal M_{n+1} \otimes \Omega^2\Real^4)$ such that
\[
D\Delta A + \Delta AV - DA w + AW= D\tilde B
\]and
\begin{align*}
\Delta(A\Delta u)&=d^*\left( 2DA\Delta u - \Delta ADu+AwD-DA(V\cdot Du)+AD(V\cdot Du)+\tilde B\cdot Du)\right)+Af\\
&:= d^*(K) + Af.
\end{align*}
Moreover,
\[
\|DA\|_{W^{1,2}(B_{1})} + \|dist(A,SO(n+1))\|_{L^\infty(B_{1})} + \|\tilde B\|_{W^{1,\frac 43}(B_{1})} \leq c\left(\|D^2u\|_{L^2(B_{2})}+ \|Du\|_{L^4(B_{2})}\right).
\]

First, we determine $E,F \in W^{1,2}_0(B_{1})$ such that
\[
\Delta E= d^*(K), \quad \quad \Delta F = Af.
\]
Interpolating on standard $L^p$ theory, we get the estimates
\begin{align*}
\|E\|_{L^{2,1}(B_1)}+\|DE\|_{L^{\frac 43,1}(B_1)}& \leq c\|K\|_{L^{\frac 43,1}(B_{2})}\\
&\leq c\left(\|D^2u\|_{L^2(B_{2})}^2+\|Du\|_{L^4(B_{2})}^2  \right).
\end{align*}Note that the estimate on $K$ comes from considering the form of the equation \eqref{ELu} and the estimates on $V,w,W$ and consequently those on $A,\tilde B$.

To determine estimates on $F$, we first observe that the estimates of Appendix \ref{estap} imply that for $G$ the fundamental solution to $\Delta^2 G = \delta_0$,
\[
\|F\|_{L^{2,\infty}(B_{1})}\leq c\|D^2G\ast (Af)\|_{L^{2,\infty}(B_{1})} \leq c \|f\|_{L^1(B_{2})},
\]
\[
\|DF\|_{L^{\frac 43,\infty}(B_1)} \leq c\|D^3G\|_{L^{\frac 43,\infty}(B_{2})}\|f\|_{L^1(B_{2})}.
\]
Also, since $\Delta F = Af \in \mathcal H^1(\Real^4)$, standard theory implies that $D^2F \in L^1(\Real^4)$ and thus by the embedding of $W^{1,1}$ into $L^{\frac 43,1}$ and Sobolev embeddings in $\Real^4$, 
\[
 \|F\|_{L^{2,1}(B_1)}+\|DF\|_{L^{\frac 43,1}(B_1)} \leq c\|f\|_{L\log L(B_{2})}.
\]Using a duality argument, we conclude that
\begin{align*}
\|F\|_{L^2(B_1)}^2& \leq c\|F\|_{L^{2,\infty}(B_1)}\|F\|_{L^{2,1}(B_1)}\\
&\leq c\|f\|_{L^1(B_{2})}\|f\|_{L\log L(B_{2})}
\end{align*}and
\begin{align*}
\|DF\|_{L^{\frac 43}(B_1)}^{\frac 43}& \leq c\|(DF)^{\frac 13}\|_{L^{4,\infty(B_1)}}\|DF\|_{L^{\frac 43,1}(B_1)}\\
&\leq c\|DF\|_{L^{\frac 43,\infty}(B_2)}^{\frac 13}\|f\|_{L \log L(B_2)}\\
&\leq c\|f\|_{L^1(B_{2})}^{\frac 13}\|f\|_{L\log L(B_{2})}.
\end{align*}

Now, set $H = A\Delta u -E-F$. Then $\Delta H=0$ in $B_1$ and using standard estimates on harmonic functions we determine that for all $0<r<\frac 12$,
\[
\|H\|_{L^2(B_r)} +\|DH\|_{L^{\frac 43}(B_r)} \leq cr\|H\|_{W^{1,\infty}(B_{1/2})}\leq cr\|H\|_{L^2(B_1)}.
\]
The previous estimates imply that
\[
\|H\|_{L^2(B_1)}^2\leq c\left(\|D^2u\|_{L^2(B_2)}^2+\|Du\|_{L^4(B_2)}^4+ \|f\|_{L^1(B_2)}\|f\|_{L\log L(B_2)} \right).
\]
Since,
\[
\Delta u = A^{-1}(E+F+H),
\]the estimates for $D^2u$ now follow from a standard cutoff argument and the previous estimates.

We estimate $\|D\Delta u\|_{L^{\frac 43}(B_r)}$ by using the previous estimates and noting that
\[
\|D(A^{-1}(E+F+H))\|_{L^{\frac 43}(B_r)} \leq C\left(\|E+F+H\|_{L^2(B_r)}\|DA\|_{L^4(B_r)}+ \|D(E+F+H)\|_{L^{\frac 43}(B_r)}\right).
\]

To determine an estimate for $Du$, we first consider $\alpha \in W^{2,2}(B_1)$, $\beta \in W^{1,2}_0(B_1,\Omega^1\Real^4)\cap W^{2,2}$ such that
\[
Adu = d\alpha + d^* \beta.
\]Then
\[
 \Delta^2 \alpha = \Delta d^*(Adu) =\Delta(A\Delta u + \nabla A \cdot \nabla u) = d^*(\tilde K) + Af \text{ on }B_1
\]and
\[
\Delta \beta = dA \wedge du \text{ on }B_1.
\]Here $\tilde K$ is the appropriate modification of $K$ to include the additional term. We first observe that
\[
\|D\beta\|_{L^4(B_{r})} \leq  c\left(\|D^2\beta\|_{L^2(B_1)}+ \|D\beta\|_{L^2(B_1)}\right).
\]Standard $L^p$ theory implies that
\[
\|D^2\beta\|_{L^2(B_{2})} \leq c\|DA\|_{W^{1,2}(B_1)}\|Du\|_{W^{1,2}(B_1)}.
\]Moreover, using a weighted Cauchy-Schwarz and the Poincar\'e Inequality, we note that
\[
\int_{B_1}|\nabla \beta^{ij}|^2 = - \int_{B_1} \beta^{ij}(dA \wedge du)^{ij}\leq c\|DA\|_{L^4(B_1)}^2\|Du\|_{L^4(B_1)}^2+ \frac 12 \|\nabla \beta\|_{L^2(B_1)}^2.
\]Combining this with previous estimates implies that
\[
\|D\beta\|_{L^4(B_{r})} \leq  c\left( \|D^2u\|_{L^2(B_{2})}^2+ \|Du\|_{L^4(B_{2})}^2\right)
\]For the $\alpha$ term, we follow the ideas used to prove \eqref{epsreg1}.  Indeed, first determine $\phi, \psi \in W^{2,2}_0(B_2)$ such that $\Delta^2 \phi = d^*(K)$ and $\Delta^2 \psi = Af$. Then by \eqref{phiest1}, \eqref{psiest1}, and appropriate duality arguments, we conclude that for any $0<r<1$,
\[
\|D\phi\|_{L^4(B_{r})}\leq c\left(\|D^2u\|_{L^2(B_2)}^2 + \|Du\|_{L^4(B_2)}^2 \right),
\]
\[
  \|D\psi\|_{L^4(B_{r})}^4 \leq c\|f\|_{L^1(B_2)}^3\|f\|_{L\log L(B_2)}.
\]Setting $B=\alpha - \psi - \phi$, $\Delta^2 B=0$ on $B_1$ and we use the mean value property to show that for any $0<r<\frac 12$,
\[
\|DB\|_{L^4(B_r)} \leq cr\|DB\|_{L^\infty(B_{3/4})}\leq cr\|DB\|_{L^4(B_{7/8})}.
\]Noting that
\begin{align*}
\|DB\|_{L^4(B_{7/8})}^4\leq c\left( \|D\alpha\|_{L^4(B_{7/8})}^4\right.& + \|Du\|_{L^4(B_{1})}^4+\|D^2u\|_{L^2(B_2)}^8\\
&\left. + \|Du\|_{L^4(B_2)}^8+\|f\|_{L^1(B_2)}^{3}\|f\|_{L\log L(B_2)} \right)
\end{align*}we combine the previous estimates to get the result for $Du$.
\end{proof}

\begin{rem}
When $u$ is intrinsic, the strategy is the same except for two things. In the first part of the argument, the equation for $u$ has the additional term $-d^*(|D u|^2 Du\wedge u)$ on the right hand side. But this term doesn't change the estimates. In the second part of the argument, $W^{ij}$ includes the term $|D u|^2(u^iD u^j-u^j D u^i)$. This gives the same value for $d^*(W^{ij})$ and all estimates going forward are the same.
\end{rem}
We will prove the energy quantization results by appealing to Lorentz duality. In Proposition \ref{thirdderivs}, we determined uniform estimates for Lorentz norms of the form $L^{p,1}$. The next lemma provides the necessary small energy estimates for the $L^{p,\infty}$ norms on the annular region, presuming small energy on all dyadic annuli.

\begin{lem}\label{inftylemma}
Let $u \in W^{2,2}(B_1,\Ss^n)$ be an $f$-approximate biharmonic map with $f \in L \log L(B_1, \Real^{n+1})$. Given $\varepsilon>0$, suppose that for all $\rho$ such that $B_{2\rho} \backslash B_{\rho} \subset B_{2\delta} \backslash B_{t/2}$
\begin{equation}\int_{B_{2\rho}\backslash B_\rho} |Du|^4 + |D^2 u|^2 + |D\Delta u|^{{\frac 43}} < \varepsilon.
\end{equation}Then,
\begin{equation*}
\|Du\|_{L^{4,\infty}(B_\delta \backslash B_t)}+
\label{inftyests}\|D^2 u\|_{L^{2,\infty}(B_\delta \backslash B_t)}+
\|D\Delta u\|_{L^{{\frac 43},\infty}(B_\delta \backslash B_t)} \leq C\left(\varepsilon^{\frac 18} + \left( \log(1/\delta)\right)^{-1}\right).
\end{equation*}
\end{lem}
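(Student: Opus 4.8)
The plan is to reduce \eqref{inftyests} to a scale-by-scale application of Proposition~\ref{thirdderivs} on a dyadic decomposition of the neck, together with a hole-filling iteration that propagates the smallness across all intermediate scales. Set $r_j := 2^{-j}$ and let $A_j := B_{r_{j-1}}\setminus B_{r_j}$, so that $B_\delta\setminus B_t$ is covered by those $A_j$ with $j$ in an index range of cardinality $N\simeq\log_2(\delta/t)$, while each fattened annulus $\hat A_j := B_{2r_{j-1}}\setminus B_{r_{j+1}}$ is still contained in $B_{2\delta}\setminus B_{t/2}$, so the small-energy hypothesis applies on $\hat A_j$. Covering $A_j$ by a number of balls $B_{r_j/8}(y)$, $y\in A_j$, bounded independently of $j$, and rescaling each such ball to unit size, one applies the rescaled (dimensionless) forms of \eqref{epsreg2}, \eqref{epsreg3}, \eqref{epsreg4}. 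On the rescaled configuration the reference ball ``$B_2$'' of Proposition~\ref{thirdderivs} is a bounded union of the $A_{j'}$, so the rescaled energy there is $<C\eps$; hence the homogeneous higher-order terms on the right-hand sides become $O(\eps^{2})$ (using the eighth-power form in \eqref{epsreg3}, and likewise for the mixed exponents), plus a contribution $b_j$ of $f$ on $\hat A_j$, while the genuinely lower-order term $r^{k}\|Du\|_{L^1(B_2)}$ rescales to a quantity dominated by $\big(\int_{\hat A_j}|Du|^{4}\big)^{1/4}$, i.e.\ by the dimensionless energies of the neighbouring annuli.

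Summing over the finitely many covering balls gives, for $a_j := \int_{A_j}\big(|Du|^4 + |D^2u|^2 + |D\Delta u|^{4/3}\big)$, a discrete inequality of hole-filling type,
\[
a_j \ \le\ \theta\sum_{|j'-j|\le C_0} a_{j'} \ +\ C\eps^{2} \ +\ C b_j,
\]
with $\theta$ coming from the explicit powers of the radius in \eqref{epsreg2}--\eqref{epsreg4}, $C_0$ the covering multiplicity, and $\sum_j b_j$ dominated by a norm of $f$ on $B_{2\delta}$ once the rescalings are accounted for. If $\theta(2C_0+1)<1$, iterating this inequality forces $a_j$ to decay geometrically as $j$ moves into the interior of the neck away from its two ends; consequently the contribution of the inner end is controlled uniformly in $t$, while the contribution of the outer end, at scale $\simeq\delta$, is bounded by distributing the a priori energy \eqref{globalest} over the $\simeq\log(1/\delta)$ dyadic scales between $B_{2\delta}$ and a fixed radius (choosing a ``clean'' reference scale by pigeonhole) --- this is the origin of the $(\log(1/\delta))^{-1}$ term. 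Equivalently, one may record the outcome as the pointwise bound $|x|\,|Du(x)| + |x|^{2}|D^2u(x)| + |x|^{3}|D\Delta u(x)| \le C\big(\eps^{1/8} + (\log(1/\delta))^{-1}\big)$ for $x\in B_\delta\setminus B_t$, obtained by bootstrapping the above together with the Lorentz estimate \eqref{epsreg1} on shrinking balls up to an $L^\infty$-bound.

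From here \eqref{inftyests} follows by an elementary distribution-function argument: writing $M := C(\eps^{1/8}+(\log(1/\delta))^{-1})$, if $|Du(x)|\le M/|x|$ on $B_\delta\setminus B_t$ then $\{\,|Du|>\lambda\,\}\cap(B_\delta\setminus B_t)\subset\{\,|x|<M/\lambda\,\}$, whose measure is $\le\omega_4 (M/\lambda)^{4}$, so $\|Du\|_{L^{4,\infty}(B_\delta\setminus B_t)}^{4}\le\omega_4 M^{4}$; the bounds for $\|D^2u\|_{L^{2,\infty}}$ and $\|D\Delta u\|_{L^{4/3,\infty}}$ follow in exactly the same way from the corresponding pointwise bounds. (If one prefers to avoid pointwise bounds, the same conclusion follows by bounding $\big|\{|Du|>\lambda\}\cap A_j\big|\le\min\big(|A_j|,\lambda^{-4}a_j\big)$, inserting the geometric decay of $a_j$, and summing the resulting geometric series.) I expect the main obstacle to be making the hole-filling iteration close --- i.e.\ verifying that the constants produced by Proposition~\ref{thirdderivs} together with the covering multiplicity genuinely satisfy $\theta(2C_0+1)<1$; if this fails one must instead extract the decay from Lorentz duality and the sharper estimate \eqref{epsreg1}. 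A secondary technical point is the careful rescaling of the $L\log L$ norm of $f$ onto each annulus and the verification that $\sum_j b_j$ together with the outer-scale term produce only the stated $(\log(1/\delta))^{-1}$.
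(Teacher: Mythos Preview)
Your hole-filling recursion does not follow from Proposition~\ref{thirdderivs}. After rescaling and H\"older, the leading terms $Cr^k\|Du\|_{L^1(B_2)}^k$ in \eqref{epsreg2}--\eqref{epsreg4} become $Cr^k\|Du\|_{L^4(\hat A_j)}^k$ with $k=4,2,\frac43$; as you yourself note, this is dominated by a power like $\hat a_j^{1/4}$, which for small $\hat a_j$ is much \emph{larger} than $\theta\hat a_j$ --- the resulting inequality is weaker than the hypothesis $a_j<\eps$ and yields no decay. Moreover the ``higher-order'' terms on the right of \eqref{epsreg2} and \eqref{epsreg4} include $\|Du\|_{L^4(B_2)}^4$ and $\|Du\|_{L^4(B_2)}^{8/3}$, which under the hypothesis are only $\le C\eps$ and $\le C\eps^{2/3}$, not $O(\eps^2)$ as you record. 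Shrinking $r$ does not help, since the covering multiplicity grows like $r^{-4}$, cancelling the gain. Without genuine geometric decay of the $a_j$, the fallback $\sum_j\min(|A_j|,\lambda^{-4}a_j)$ with $a_j\sim\eps$ gives $\|Du\|_{L^{4,\infty}}^4\sim\eps\log(\delta/t)$, unbounded as $t\to0$; you anticipated this obstacle (``if this fails one must instead\ldots'') but supplied no alternative argument.

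The paper's proof is entirely different and uses neither iteration nor hole-filling. On each dyadic shell it writes $Du=DG\ast\Delta^2\big(\tilde\phi_k(u-\bar u_k)\big)$ and uses the structural equation \eqref{deltasquaredeq} to split $\Delta^2(\tilde\phi_k(u-\bar u_k))$ into pieces $I_k+II_k+III_k+IV_k$. The cutoff errors $I_k,II_k$ are bounded \emph{pointwise} by $C\eps^{1/8}|x|^{-1}$, uniformly in $k$, so their contribution to $\{|Du|>\lambda\}$ across all shells lies in the single ball $\{|x|<C\eps^{1/8}/\lambda\}$ --- this is what bypasses any summation in $k$. The divergence piece $III_k$ and the inhomogeneity $IV_k$ are estimated directly in $L^{4,\infty}$ via the convolution bounds on $D^jG$; their fourth powers are summable over $k$ because the nonlinearity supplies extra factors (e.g.\ $(\int\tilde\phi_k|Du|^4)^3\le\eps^2\int\tilde\phi_k|Du|^4$, summing to $C\eps^2\Lambda$), while $\sum_k\|f\|_{L^1(A_k)}^4\le C(\log(1/\delta))^{-4}\|f\|_{L\log L}^4$ via Lemma~\ref{fL1lemma} produces the $(\log(1/\delta))^{-1}$. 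Your pointwise route through full $\eps$-regularity is viable in principle, but you would need to actually prove it with explicit $L\log L$ dependence on $f$, and the Green's-function splitting is essentially how that is done.
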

\begin{proof}
Let $\tilde \phi_k:= \phi_{2^{k+2}t} (1-\phi_{2^{k-2}t})$ be the annular cutoff supported on $A_k:=B_{2^{k+3}t} \backslash B_{2^{k-2}t}$ which is identically $1$ on $B_{2^{k+2}t}\backslash B_{2^{k-1}t}$. 
Let $G$ be the distribution such that $\Delta^2 G = \delta_0$ in $\Real^4$. Then $|DG(x) |= C|x|^{-1}$. Note that operator bounds on $D^kG$ can be found in the appendix.
Let $\overline u_k:= \mint_{A_k} u$.
Define $\tilde u_k(x):= \tilde \phi_k(u-\overline u_k)(x)$. Therefore on $B_{2^{k+1}t} \backslash B_{2^k t}$,
 \begin{align*}
 \Delta^2 \tilde u_k =(\Delta^2  \tilde \phi_k)(u-\overline u_k)+4 D\Delta \tilde \phi_k\cdot D(u-\overline u_k)+ 2\Delta \tilde \phi_k\Delta u + 4D  \tilde \phi_k\cdot D\Delta u+  \tilde \phi_k\Delta^2 u.
 \end{align*}Using the fact that $\Delta^2 u = \Delta(\Delta u \wedge u.u -|Du|^2u)$ and that $\Delta^2 u \wedge u = f\wedge u$, we note that
 \begin{align*}
 \tilde \phi_k\Delta^2 u =& d^*\left( \tilde \phi_k(2\Delta u \wedge D u.u+ 2\Delta u \wedge u.Du- D(u|Du|^2) )\right)\\
&\quad -D\tilde \phi_k \cdot(2\Delta u \wedge D u.u+2\Delta u\wedge u.Du- D(u|Du|^2))\\
& \quad  +\tilde \phi_k\left(f \wedge u.u -2 \Delta u\wedge Du.Du- \Delta u \wedge u.\Delta u\right) .
 \end{align*}And thus,
 \begin{align*}
\Delta^2 \tilde u_k = & (\Delta^2  \tilde \phi_k)(u-\overline u_k)+ 4D\Delta \tilde \phi_k\cdot D(u-\overline u_k)+ 2\Delta\tilde \phi_k\Delta u + 4D  \tilde \phi_k\cdot D\Delta u\\
&\quad -D\tilde \phi_k \cdot(2\Delta u \wedge D u.u+2\Delta u\wedge u.Du- D(u|Du|^2))\\
& \quad +d^*\left( \tilde \phi_k(2\Delta u \wedge D u.u+ 2\Delta u \wedge u.Du- D(u|Du|^2) )\right)\\
& \quad  +\tilde \phi_k\left(f \wedge u.u -2 \Delta u\wedge Du.Du- \Delta u \wedge u.\Delta u\right).
\end{align*}
For ease of notation, we let $I_k$ denote the first four terms above, and $II_k, III_k, IV_k$ denote each of the last three terms. Then on each $B_{2^{k+1}t} \backslash B_{2^k t}$
\begin{align*}
|Du(x)|&=| D (\tilde \phi_k(u-\overline u_k))(x)|=|\Delta^2 G \ast D(\tilde \phi_k(u-\overline u_k))(x)|\\
&= |DG \ast \Delta^2(\tilde \phi_k(u-\overline u_k))(x)|=|DG \ast(I_k + II_k + III_k+IV_k)(x)|.
\end{align*} We consider each of these estimates separately. First, note that
\begin{align*}
|DG \ast I_k(x)| & \leq C\left|\int_{(B_{2^{k+3}t} \backslash B_{2^{k+2}t} )\cup (B_{2^{k-1}t} \backslash B_{2^{k-2}t})}\frac{1}{|x-y|}\left((2^kt)^{-4}(u-\overline u_k) \right. \right.\\
& \qquad \qquad \qquad \qquad\left.\left.+  (2^kt)^{-3}D(u-\overline u_k)+(2^kt)^{-2}\Delta u + (2^kt)^{-1}D\Delta u\right)dy\right|\\
& \leq C \left|\int_{(B_{2^{k+3}t} \backslash B_{2^{k-2}t})} (2^kt)^{-1} \left((2^kt)^{-4}(u-\overline u_k) +  (2^kt)^{-3}Du\right.\right.\\
& \qquad \qquad \qquad \qquad \qquad \qquad \qquad \qquad\qquad\left.\left.+(2^kt)^{-2}\Delta u + (2^kt)^{-1}D\Delta u\right) dy\right|\\
& \leq C \int_{(B_{2^{k+3}t} \backslash B_{2^{k-2}t})}(2^kt)^{-4}|Du| +(2^kt)^{-3}|D^2 u| + (2^kt)^{-2}|D\Delta u|\\
& \leq C(2^kt)^{-1} \left(\|Du\|_{L^4}+\|D^2u\|_{L^2} + \|D\Delta u\|_{L^{\frac 43}}\right)\\
& \leq C ({\varepsilon^{1/4}+\varepsilon^{1/2}+ \varepsilon^{3/4}}){|x|}^{-1}.
\end{align*}
Using the same ideas as previously, we bound
\begin{align*}
|DG \ast II_k(x)| &\leq C(2^kt)^{-2}\int_{(B_{2^{k+3}t} \backslash B_{2^{k-2}t})}\left|2\Delta u \wedge D u.u +2 \Delta u \wedge u.Du- D(u|Du|^2)\right|\\
& \leq C (2^kt)^{-1} \|2\Delta u \wedge D u.u+2\Delta u \wedge u.Du- D(u|Du|^2)\|_{L^{\frac 43}(B_{2^{k+3}t} \backslash B_{2^{k-2}t})}\\
& \leq C (2^kt)^{-1}\left(\|D^2u\|_{L^2} \|Du\|_{L^4} + \|Du\|_{L^4}^3 \right)\\
& \leq C({\varepsilon^{1/8} + \varepsilon^{3/4}}){|x|}^{-1}.
\end{align*}
Using the estimates from the appendix, we note that
\begin{align*}
\|DG \ast& III_k\|_{L^{4,\infty}(B_{2^{k+3}t} \backslash B_{2^{k-2}t})}\\ &\quad \quad\leq C \|D^2G\ast\tilde \phi_k\left(2\Delta u \wedge D u.u+2\Delta u \wedge u.Du- D(u|Du|^2)\right) \|_{L^{4, \infty}(B_{2^{k+3}t} \backslash B_{2^{k-2}t})}\\
&\quad \quad\leq C \|\tilde \phi_k\left(2\Delta u \wedge D u.u+2\Delta u \wedge u.Du- D(u|Du|^2)\right) \|_{L^{\frac 43}(B_{2^{k+3}t} \backslash B_{2^{k-2}t})}
\end{align*}and
\[
\|DG \ast IV_k\|_{L^{4,\infty}(B_{2^{k+3}t} \backslash B_{2^{k-2}t})} \leq C \|\tilde \phi_k(f \wedge u.u-2\Delta u \wedge Du.Du - \Delta u \wedge u.\Delta u)\|_{L^1(B_{2^{k+3}t} \backslash B_{2^{k-2}t})}.
\]
Thus
\begin{align*}
\left|\{x: \left|DG\ast(III_k\right.\right.&\left.\left.+IV_k)(x)\right|> \lambda\}\right|\\ &\leq \lambda^{-4}\|DG\ast(III_k+IV_k)\|_{L^{4,\infty}
(\Real^4)}^4 \\
& \leq C \lambda^{-4}\left(\| \tilde \phi_k\left(f \wedge u.u-2\Delta u \wedge Du.Du - \Delta u \wedge u.\Delta u\right)\|^4_{L^1(B_{2^{k+3}t} \backslash B_{2^{k-2}t})} \right.\\
& \quad + \left.\| 
\tilde \phi_k(2\Delta u \wedge D u.u + 2\Delta u \wedge u.Du- D(u|Du|^2) )\|_{L^{\frac 43}(B_{2^{k+3}t} \backslash B_{2^{k-2}t})}^4\right)\\
& \leq C \lambda^{-4} \left(\left(\int\tilde \phi_k|D^2u|^2 \right)^{2}\left(\int \tilde \phi_k |Du|^4\right) +\left( \int \tilde \phi_k|Du|^4\right)^3 \right)\\
& \qquad + C \lambda^{-4}\| \tilde \phi_k\left(f \wedge u.u-2\Delta u \wedge Du.Du - \Delta u \wedge u.\Delta u\right)\|^4_{L^1((B_{2^{k+3}t} \backslash B_{2^{k-2}t}))}.
\end{align*}

Thus, if $\delta = 2^M t$ then
\begin{align*}
|\{x \in B_\delta \backslash B_t: &|D u(x)| > 3\lambda\}|\\
& \leq  \sum_{k=0}^{M-1} |\{x \in B_{2^{k+1}t} \backslash B_{2^kt}: |D u(x)| >3 \lambda\}|\\
&\leq  \sum_{k=0}^{M-1}|\{x \in B_{2^{k+1}t} \backslash B_{2^kt}:|DG \ast I_k|>\lambda\}| \\
& \quad\quad+ \sum_{k=0}^{M-1}|\{x \in B_{2^{k+1}t} \backslash B_{2^kt}:|DG \ast II_k|>\lambda\}| \\
& \quad \quad+ \sum_{k=0}^{M-1}|\{x \in B_{2^{k+1}t} \backslash B_{2^kt}:|DG\ast(III_k + IV_k) |>\lambda\}|\\
& \leq \sum_{k=0}^{M-1}\left|\{ \left|DG\ast(III_k+IV_k)\right|> \lambda\}\right|+ |\{x \in B_1: C\frac{\varepsilon^{1/8}}{|x|} > \lambda\}| \\
& \leq C \lambda^{-4} \left( \varepsilon^{\frac 12}+\sum_{k=0}^{M-1}\| \tilde \phi_k\left(f \wedge u.u\right)\|^4_{L^1((B_{2^{k+3}t} \backslash B_{2^{k-2}t}))} \right.\\
& \quad \quad \left.+\sum_{k=0}^{M-1} \left(\left(\int\tilde \phi_k|D^2u|^2 \right)^{4}+\left(\int \tilde \phi_k |Du|^4\right)^{4}  \right.\right.\\
& \quad \quad\quad \left.+ \left.\left(\int \tilde \phi_k |Du|^4\right)^{3} +\left( \int \tilde \phi_k|Du|^4\right)^2 \right)\right)\\
& \leq C\lambda^{-4} \left( \varepsilon^{\frac 12}  + \left( \log(1/\delta)\right)^{-4}\|f\|^4_{L \log L(B_{2\delta})}+ \varepsilon^2\right).
\end{align*}For the estimate on $\|f \wedge u.u\|_{L^1}$ we use Lemma \ref{fL1lemma} and for the rest of the $L^1$ estimate we just use Cauchy-Schwarz. This proves the estimate for $Du$. The estimates for $D^2u, D\Delta u$ work in much the same way. In the case of $D^2u$, the terms like $III_k, VI_k$ require the fact that $D^3G:L^{\frac 43}\to L^{2,\infty}, D^2G:L^{1} \to L^{2,\infty}$ are bounded operators where the operation is convolution. For the term $D\Delta u$ we observe that $D^3G: L^1\to L^{\frac 43,\infty} , D^4G:L^{\frac 43} \to L^{\frac 43, \infty}$ are also bounded operators.
\end{proof}

\section{Energy Quantization -- Proof of Theorem \ref{bubblethm}}\label{quantize_section}
We now determine a weak convergence result which will give small energy compactness and help us complete the proof of the energy quantization. We follow the ideas of \cite{LiZhu,ST}, which in turn follow the arguments of \cite{Evans}, with appropriate minor modifications.  Throughout this lemma and its proof, we consider a measurable function $f$ as both a function and a Radon measure.
\begin{lem}\label{measures}
Suppose $\{V_k\} \subset W^{1,{\frac 43}}(B_1)$ is a bounded sequence in $B_1 \subset \Real^4$. Then there exist at most countable $\{x_i\} \subset B_1$ and $\{a_i >0\}$ with $\sum_i a_i < \infty$ and $V \in W^{1,{\frac 43}}(B_1)$ such that, after passing to a subsequence
\[
V_k^2 \rightharpoonup V^2+ \sum_i a_i \delta_{x_i}
\]weakly as measures.
\end{lem}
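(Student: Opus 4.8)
The plan is to run the standard concentration-compactness / defect-measure argument for the sequence $\{V_k^2\}$, following Lemma 1.2 of Lin-Wang or the analogous lemmas in \cite{Evans,LiZhu,ST}. First I would observe that by the Rellich-Kondrachov theorem, the embedding $W^{1,4/3}(B_1) \hookrightarrow L^2(B_1)$ is compact (since in $\Real^4$ the Sobolev conjugate of $4/3$ is $4/3 \cdot 4/(4-4/3) = 2$, so $W^{1,4/3} \hookrightarrow L^2$ is borderline and hence we only get compactness into $L^q$ for $q<2$; to fix this I would instead pass to a subsequence with $V_k \rightharpoonup V$ weakly in $W^{1,4/3}$ and $V_k \to V$ in $L^q_{loc}$ for every $q<2$, and also a.e.). Since $\{V_k^2\}$ is bounded in $L^1(B_1)$, after a further subsequence there is a finite nonnegative Radon measure $\mu$ on $B_1$ with $V_k^2 \rightharpoonup \mu$ weakly-$*$ as measures. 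The task is to identify $\mu$.

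The key step is to show $\mu = V^2 + \nu$ where $\nu$ is a nonnegative measure supported on an at most countable set. First, for any Borel set $E$, Fatou/weak lower semicontinuity applied along the a.e.-convergent subsequence gives $\int_E V^2 \le \mu(E)$, so the Radon-Nikodym decomposition $\mu = V^2 + \nu$ with $\nu \ge 0$ makes sense once we check $V^2 \le \mu$ as measures; this follows because on any ball $B_r(x) \subset B_1$, $\int_{B_r(x)} V^2 = \lim_j \int_{B_r(x)} V_{k_j}^2 \cdot (\text{truncation})$... more cleanly: test against $0 \le \varphi \in C_c$, use $V_k \to V$ in $L^{3/2}_{loc}$ say, then $\int \varphi V_k^2 \to \int \varphi V^2$ would need $L^2$ convergence which we don't have — so instead I use that $V_k^2 \rightharpoonup \mu$ and lower semicontinuity of $\int \varphi \,(\cdot)^2$ under weak $L^q$ convergence of $V_k$ to conclude $\int \varphi V^2 \le \int \varphi \, d\mu$ for all such $\varphi$, i.e. $V^2\,dx \le \mu$. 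The genuinely delicate part is showing the singular part $\nu$ is atomic. For this I would invoke the Sobolev inequality quantitatively: for each $x$ in the support of $\nu$ and each small $r$,
\[
\left(\int_{B_r(x)} |V_k - (V_k)_{x,r}|^2\right)^{1/2} \le C \left(\int_{B_r(x)} |DV_k|^{4/3}\right)^{3/4},
\]
where $(V_k)_{x,r}$ is the average; passing $k \to \infty$ and then using that $DV_k$ is bounded in $L^{4/3}$, one shows that an atom of $\nu$ at $x$ of mass $a$ forces $\liminf_k \int_{B_r(x)} |DV_k|^{4/3} \ge c \, a^{2/3}$ for all small $r$, i.e. the measure $|DV_k|^{4/3}\,dx$ (which subconverges to some finite measure) has mass $\ge c\, a^{2/3}$ concentrated arbitrarily close to $x$. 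Since the limiting measure of $|DV_k|^{4/3}$ is finite, only countably many such points can occur and $\sum_i a_i^{2/3} < \infty$; in particular $\sum_i a_i < \infty$. Finally, on $B_1 \setminus \{x_i\}$ one upgrades $L^q_{loc}$ convergence to show $\mu$ has no further singular part there: if $x_0 \notin \{x_i\}$ then $\limsup_r \limsup_k \int_{B_r(x_0)} |DV_k|^{4/3} = 0$... (this is not quite right since $DV_k$ need not converge strongly), so more carefully, at a point where the limiting measure $\lambda := \text{weak-}*\lim |DV_k|^{4/3}$ has zero density in a suitable sense, the oscillation estimate above shows $\nu(\{x_0\}) = 0$ and then a covering argument shows $\nu \llcorner (B_1\setminus\{x_i\}) = 0$. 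Combining, $\mu = V^2 + \sum_i a_i \delta_{x_i}$, which is the claim; $V \in W^{1,4/3}$ by weak closedness of the ball in $W^{1,4/3}$.

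The main obstacle is precisely the borderline nature of the embedding $W^{1,4/3} \hookrightarrow L^2$ in dimension four: because the target exponent $2$ is the critical Sobolev exponent, the inclusion fails to be compact, so one does not automatically get $V_k \to V$ in $L^2$ and hence cannot directly conclude $V_k^2 \to V^2$ in $L^1$. This is exactly why the defect measure $\nu$ is nontrivial, and the whole content of the lemma is controlling it. The remedy is the standard one: combine a.e. convergence plus uniform $L^1$-boundedness of $V_k^2$ (giving weak-$*$ convergence to some $\mu$ and, via the Brezis-Lieb lemma or lower semicontinuity, $\mu \ge V^2$) with the scale-invariant Sobolev/Poincaré inequality above (giving that the singular part is carried by the atoms of the weak-$*$ limit of $|DV_k|^{4/3}$, hence countable with summable masses). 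I would model the write-up closely on the corresponding lemma in \cite{LiZhu} or \cite{ST}, citing \cite{Evans} for the concentration-compactness input, and only spell out the dimension-four exponent bookkeeping.
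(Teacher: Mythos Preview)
Your overall strategy---defect measures plus a scale-invariant Sobolev inequality---is the same as the paper's, but the execution has a real gap at the step where you try to show the defect part $\nu:=\mu-V^2$ is purely atomic. Your Sobolev--Poincar\'e inequality controls $\int_{B_r(x)}|V_k-(V_k)_{x,r}|^2$, and this quantity has no clean relation to $\int_{B_r(x)}(V_k^2-V^2)$; subtracting the average of $V_k$ is not the same as subtracting $V$. So the chain ``atom of $\nu$ at $x$ forces concentration of $|DV_k|^{4/3}$'' is not justified, and your subsequent claim that the diffuse part of $\nu$ vanishes is only asserted (``a covering argument''), not proved. A minor side issue: $\sum a_i^{2/3}<\infty$ does not imply $\sum a_i<\infty$; the summability is trivial anyway since $\nu$ is finite.

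The fix, which is exactly what the paper does, is to subtract $V$ \emph{before} applying Sobolev. Set $g_k:=V_k-V$; then $g_k\to 0$ strongly in $L^{4/3}$ while $Dg_k$ is bounded in $L^{4/3}$, so after a subsequence $g_k^2\rightharpoonup\nu$ and $|Dg_k|^{4/3}\rightharpoonup\mu$ as measures. Applying the Sobolev inequality to $\phi g_k$ and letting $k\to\infty$ (the term $g_k\,D\phi$ disappears because $g_k\to 0$ in $L^{4/3}$) gives the clean relation
\[
\int \phi^2\,d\nu \le C\Bigl(\int|\phi|^{4/3}\,d\mu\Bigr)^{3/2},\qquad\text{hence}\qquad \nu(B_r(x))\le C\,\mu(B_r(x))^{3/2}.
\]
Differentiation of measures then gives $D_\mu\nu(x)\le C\mu(\{x\})^{1/2}=0$ off the (at most countable) atoms of $\mu$, so $\nu=\sum_i a_i\delta_{x_i}$ immediately---no covering argument needed. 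The identification $V_k^2\rightharpoonup V^2+\nu$ follows from the algebraic identity $(V_k-V)^2=V_k^2-V^2+2V(V-V_k)$ together with $V_k\rightharpoonup V$ weakly in $L^2$, which makes the cross term vanish. Your lower-semicontinuity detour to get $V^2\le\mu$ is then unnecessary.
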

\begin{proof}
As $W^{1,{\frac 43}}$ embeds continuously into $L^2$ in four dimensions, after taking a subsequence, by Rellich compactness there exists some $V\in L^2$ such that $V_k \to V$ strongly in $L^p$ for $1\leq p<2$ and $V_k \rightharpoonup V$ weakly in $L^2$. Moreover, since $\{DV_k\}$ is uniformly bounded in $L^{{\frac 43}}$, it follows that $DV_k \rightharpoonup f \in L^{{\frac 43}}$ and $f$ is necessarily $DV$.

Set $g_k:= V_k - V$. Then $g_k \in L^2$ and $Dg_k \in L^{{\frac 43}}$ with uniform bounds. Thus, in the weak-$\ast$ topology, both $|Dg_k|^{{\frac 43}}$ and $g_k^2$ converge to non-negative Radon measures with finite total mass. (We denote this space $M(B)$). Then $g_k^2 \rightharpoonup \nu \in M(B)$ and $|Dg_k|^{{\frac 43}} \rightharpoonup \mu \in M(B)$ where $\nu,  \mu$ are both non-negative. Now consider $\phi \in C_0^1(B_1)$ and observe that the Sobolev embedding of $W^{1,{\frac 43}}$ into $L^2$ implies that
\[
\left(\int (\phi g_k)^2 dx \right)^{{\frac 12}} \leq C \left(\int |D(\phi g_k)|^{{\frac 43}}dx\right)^{{\frac 34}}.
\]Taking $k \to \infty$ and noting that $g_k \to 0$ in $L^{{\frac 43}}$ we use the weak convergence to observe that
\[
\int \phi^2 d\nu \leq C\left(\int |\phi|^{{\frac 43}} d\mu\right)^{{\frac 32}}.
\]Let $\phi$ approximate $\chi_{B_r(x)}$ for $B_r(x) \subset B_1$. Then 
\[\nu (B_r(x)) \leq C\left(\mu(B_r(x))\right)^{{\frac 32}}.\]
By standard results on the differentiation of measures (see \cite{EvansGariepy}, section 1.6), for any Borel set $E$
\[
\nu(E) =\int_E D_\mu \nu \, d\mu, \text{ where } D_\mu \nu(x) = \lim_{\substack{r \to 0}}\frac{\nu(B_r(x))}{\mu(B_r(x))} \text{ for } \mu-\text{ a.e. } x \in \Real^4.
\]Now, as $\mu$ is a finite, non-negative, Radon measure, there exist at most countably many $x_i \in B_1$ such that $\mu(\{x_i\})>0$. Moreover, for all $x\in B$ such that $\mu(\{x\})=0$, we note that
\[
D_\mu \nu(x) = \lim_{\substack{r \to 0}}\frac{\nu(B_r(x))}{\mu(B_r(x))} \leq C\lim_{\substack{r \to 0}}\mu(B_r(x))^{\frac 12}=0.
\]For every $x_j$ such that $\mu(\{x_j\})>0$, set $a_j = D_\mu \nu(x_j) \mu(\{x_j\})$. Then 
\[
\nu(E) = \int_E D_\mu \nu d\mu = \sum_{\substack{\{j | \,x_j \in E\}}}a_j \quad \text{ or}\quad \nu = \sum_j a_j \delta_{x_j}.
\]Since $g_k^2 \rightharpoonup \nu$ as measures, for $\phi \in C_0^0(B_1)$,
\[
\sum_j a_j \phi(x_j) = \lim_{k\to \infty}\int_{B_1} g_k^2 \phi dx = \lim_{k \to \infty}\int_{B_1}\left(V_k - V\right)^2 dx.
\]Since $(V_k-V)^2 = V_k^2 - V^2 + 2V(V-V_k)$ and $V-V_k = g_k \rightharpoonup 0$ in $L^2$, we have the result.
\end{proof}

\begin{cor}\label{measurecor}
For $\{V_k\}$ as in Lemma \ref{measures}, if
\begin{equation}\label{measurelimit}
\lim_{\substack{r \to 0}} \limsup_{\substack{k \to \infty}} \|V_k\|_{L^2(B_r(x))} =0
\end{equation}
for all $x \in B$, then 
\[
V_k \to V \text{ strongly in } L^2_{loc}(B).
\]
\end{cor}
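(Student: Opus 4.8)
The plan is to deduce Corollary \ref{measurecor} directly from Lemma \ref{measures} by showing that the hypothesis \eqref{measurelimit} forces the singular part of the weak-$\ast$ limit to vanish. By Lemma \ref{measures}, after passing to a subsequence we have $V_k^2 \rightharpoonup V^2 + \sum_i a_i \delta_{x_i}$ weakly as measures, with $a_i > 0$ and $\sum_i a_i < \infty$. The first step is to argue that \eqref{measurelimit} implies $a_i = 0$ for every $i$: fixing an atom $x_i$ and a radius $r$, one chooses a cutoff $\varphi \in C_0^0(B_r(x_i))$ with $0 \le \varphi \le 1$ and $\varphi \equiv 1$ on $B_{r/2}(x_i)$, and then
\[
a_i \le \int \varphi \, d\Big(V^2 + \sum_j a_j \delta_{x_j}\Big) \text{ would not be quite right};
\]
more carefully, $a_i = (\sum_j a_j \delta_{x_j})(\{x_i\}) \le (V^2 + \sum_j a_j \delta_{x_j})(\overline{B_{r/2}(x_i)}) \le \liminf_{k\to\infty}\int_{B_r(x_i)} V_k^2 \,dx$, using lower semicontinuity of mass on open sets (or an approximation of $\chi_{B_{r/2}}$ from below by $C_0$ functions and of $\chi_{\overline{B_{r/2}}}$ from above). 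Since this holds for all $r$, letting $r \to 0$ and invoking \eqref{measurelimit} gives $a_i \le \lim_{r\to 0}\limsup_{k\to\infty}\|V_k\|_{L^2(B_r(x_i))}^2 = 0$, hence $a_i = 0$. As this holds for each $i$, the singular part is absent and $V_k^2 \rightharpoonup V^2$ weakly as measures.

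The second step is to upgrade this weak-$\ast$ convergence of $V_k^2$ to strong $L^2_{loc}$ convergence of $V_k$ to $V$. From the proof of Lemma \ref{measures} we already have $V_k \rightharpoonup V$ weakly in $L^2$ and $V_k \to V$ strongly in $L^p$ for $1 \le p < 2$ (by Rellich). Fix any ball $B' \Subset B_1$ and a cutoff $\varphi \in C_0^0(B_1)$ with $\varphi \equiv 1$ on $B'$, $0 \le \varphi \le 1$. Testing $V_k^2 \rightharpoonup V^2$ against $\varphi$ gives $\int \varphi V_k^2 \to \int \varphi V^2$, hence $\limsup_k \int_{B'} V_k^2 \le \lim_k \int \varphi V_k^2 = \int \varphi V^2$, and by weak lower semicontinuity of the $L^2$ norm, $\int \varphi V^2 \le \liminf_k \int \varphi V_k^2$. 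Combined with the expansion $\int \varphi (V_k - V)^2 = \int \varphi V_k^2 - 2\int \varphi V_k V + \int \varphi V^2$ and the fact that $\int \varphi V_k V \to \int \varphi V^2$ (from weak $L^2$ convergence, since $\varphi V \in L^2$), we conclude $\int \varphi (V_k - V)^2 \to 0$, so in particular $V_k \to V$ in $L^2(B')$. Since $B' \Subset B_1$ was arbitrary, $V_k \to V$ strongly in $L^2_{loc}(B_1)$.

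Finally, one should address the standard subsequence subtlety: a priori the convergence is only along a subsequence, but since the limit $V$ is uniquely determined (it is the weak $W^{1,4/3}$ limit, independent of the subsequence) and every subsequence of $\{V_k\}$ has a further subsequence converging strongly in $L^2_{loc}$ to $V$, the full sequence converges. The only mild obstacle is making the measure-theoretic comparison in Step 1 rigorous — one must be slightly careful about open versus closed balls and whether the atom $x_i$ sits on the boundary sphere $\partial B_{r/2}(x_i)$, but this is harmless since $x_i$ is the center, and passing $r \to 0$ washes out any such ambiguity. Everything else is a routine application of weak-$\ast$ convergence, weak lower semicontinuity, and the Rellich compactness already established.
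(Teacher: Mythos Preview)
Your proposal is correct and follows essentially the same route as the paper: first use \eqref{measurelimit} to kill the atoms so that $V_k^2 \rightharpoonup V^2$ as measures, then combine this with weak $L^2$ convergence via the expansion $(V_k-V)^2 = V_k^2 - 2V_kV + V^2$ to obtain strong $L^2_{loc}$ convergence. The paper compresses your Step~1 into a single sentence and in Step~2 quotes a result from Evans--Gariepy (Section~1.9) to get norm convergence on balls rather than testing against a cutoff, but the substance is identical; your version is simply more explicit, and your subsequence remark is a welcome addition.
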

\begin{proof}
Notice the condition \eqref{measurelimit} implies that $|V_k|^2 \rightharpoonup |V|^2$ weakly as bounded, Radon measures. Then (by \cite{EvansGariepy}, Section 1.9) for any $B_r(x) \subset B_1$,  $\|V_k\|_{L^2(B_r(x))}\to \|V\|_{L^2(B_r(x))}$ strongly for all $B_r(x) \subset B_1$. Then, again using the fact that $(V_k-V)^2 = V_k^2 - V^2 + 2V(V-V_k)$ and
\[
\int_{B_r(x)} V_k^2-V^2 dx + \int_{B_r(x)} 2V(V-V_k) dx \to 0 \text{ as } k \to \infty
\]
we conclude $V_k \to V$ strongly in $L^2_{loc}(B_1)$.
\end{proof}

We now use the energy estimates of Proposition \ref{thirdderivs} to prove a small energy compactness result.

\begin{lem}\label{strong2}Let $u_k$ be a sequence of $f_k$-approximate biharmonic maps in $B_2$ with $f_k \in L\log L(B_2)$ satisfying \eqref{globalest}. There exists $\varepsilon_0>0$ such that if $\|Du_k\|_{L^4(B_2)} + \|D^2u_k\|_{L^2(B_2)} < \varepsilon_0$, then there exists $u \in W^{2,2}_{loc}(B_{2})$ such that 
\[
Du_k \to Du \text{ strongly in }L^4_{loc}(B_1),
\]
\[
D^2u_k \to D^2u\text{ strongly in }L^2_{loc}(B_1).
\]
\end{lem}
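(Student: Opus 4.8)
The plan is to combine the $\varepsilon$-regularity of Proposition~\ref{thirdderivs} with the measure-theoretic decomposition of Lemma~\ref{measures} and Corollary~\ref{measurecor}. After passing to a subsequence (not relabeled), the uniform bounds \eqref{globalest} together with $\abs{u_k}\equiv 1$ force $u_k\rightharpoonup u$ weakly in $W^{2,2}$, hence $D^2u_k\rightharpoonup D^2u$ weakly in $L^2$, $Du_k\rightharpoonup Du$ weakly in $L^4$, and by Rellich $Du_k\to Du$ in $L^p_{loc}$ for every $p<4$ and $D^2u_k\to D^2u$ in $L^q_{loc}$ for every $q<2$ (so both converge a.e.\ after a further subsequence). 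Moreover, since the right-hand side of \eqref{epsreg1} for each $u_k$ is bounded by a constant depending only on $\Lambda$, the sequence $\{D^2u_k\}$ is bounded in $W^{1,\frac43}(B_1)$ and $\{Du_k\}$ is bounded in $W^{1,2}(B_1)$. With $u$ identified as the weak limit, it then remains to promote the convergence to the critical spaces, and for this it suffices to establish the \emph{no-concentration} property: for every $x_0\in B_1$,
\[
\lim_{r\to 0}\ \limsup_{k\to\infty}\Bigl(\norm{D^2u_k}_{L^2(B_r(x_0))}+\norm{Du_k}_{L^4(B_r(x_0))}\Bigr)=0.
\]

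To prove no-concentration I would fix $x_0\in B_1$, set $\psi_k(s):=\norm{D^2u_k}_{L^2(B_s(x_0))}+\norm{Du_k}_{L^4(B_s(x_0))}$, and run a decay iteration. For $\rho<\tfrac12$ rescale $B_{2\rho}(x_0)$ to $B_2$; the rescaled map is again an approximate biharmonic map, the norms $\norm{D^2u_k}_{L^2}$, $\norm{Du_k}_{L^4}$, $\norm{f_k}_{L^1}$ are scale invariant, and the $L\log L$-norm of the rescaled source is bounded by $c\norm{f_k}_{L\log L(B_{2\rho}(x_0))}$ (convexity of the Young function). Applying the rescaled forms of \eqref{epsreg2} and \eqref{epsreg3} on $B_{\sigma\rho}(x_0)$ with $\sigma<\tfrac12$ small, using H\"older to bound $\norm{Du_k}_{L^1(B_{2\rho}(x_0))}\le c\rho^3\norm{Du_k}_{L^4(B_{2\rho}(x_0))}$ and using $\psi_k(2\rho)\le\psi_k(2)<\varepsilon_0$ to absorb the higher-order terms $\psi_k(2\rho)^2$, I expect to obtain
\[
\psi_k(\sigma\rho)\le C_1(\sigma+\varepsilon_0)\,\psi_k(2\rho)+\omega(\rho),
\]
where $\omega(\rho)\to 0$ as $\rho\to 0$ uniformly in $k$ and $x_0$. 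Here $\omega(\rho)$ is built from $\sup_k\norm{f_k}_{L^1(E)}$ over sets $E$ with $\abs{E}\le c\rho^4$, which tends to zero because a family bounded in $L\log L(B_2)$ is uniformly integrable (de la Vall\'ee-Poussin). Choosing first $\sigma$ and then $\varepsilon_0$ so small that $C_1(\sigma+\varepsilon_0)\le\tfrac12$, and iterating from an initial radius $2\rho_0$ (the $j$-th estimate being on the ball of radius $(\sigma/2)^j\cdot 2\rho_0$), one gets $\psi_k\bigl((\sigma/2)^j\cdot 2\rho_0\bigr)\le 2^{-j}\varepsilon_0+2\,\omega(\rho_0)$ for all $j,k$; letting $j\to\infty$ and then $\rho_0\to 0$ gives the claim. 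This fixes the value of $\varepsilon_0$ in terms of the constant from Proposition~\ref{thirdderivs}.

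With no-concentration established, I would apply Lemma~\ref{measures} to $V_k:=\abs{D^2u_k}$, which is bounded in $W^{1,\frac43}(B_1)$ since $\abs{D\abs{D^2u_k}}\le\abs{D^3u_k}$; writing $V$ for its weak $W^{1,\frac43}$-limit, the no-concentration property is exactly the hypothesis \eqref{measurelimit}, so Corollary~\ref{measurecor} yields $\abs{D^2u_k}\to V$ strongly in $L^2_{loc}(B_1)$. The $W^{1,\frac43}(B_1)$-bound and Rellich also give $D^2u_k\to D^2u$ in $L^q_{loc}$ for $q<2$, hence a.e., so $V=\abs{D^2u}$; therefore $\norm{D^2u_k}_{L^2(B_r(x))}\to\norm{D^2u}_{L^2(B_r(x))}$ whenever $\overline{B_r(x)}\subset B_1$, and since $D^2u_k\rightharpoonup D^2u$ weakly in $L^2$ and $L^2$ is uniformly convex, $D^2u_k\to D^2u$ strongly in $L^2_{loc}(B_1)$. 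Running the same argument with $W_k:=\abs{Du_k}^2$ (which is bounded in $W^{1,\frac43}(B_1)$ because $\abs{DW_k}\le 2\abs{Du_k}\,\abs{D^2u_k}$, and which satisfies $\norm{W_k}_{L^2(B_r(x))}=\norm{Du_k}_{L^4(B_r(x))}^2\to 0$ as $r\to 0$ by no-concentration) gives $\abs{Du_k}^2\to\abs{Du}^2$ in $L^2_{loc}(B_1)$, that is $\norm{Du_k}_{L^4(B_r(x))}\to\norm{Du}_{L^4(B_r(x))}$; combined with $Du_k\rightharpoonup Du$ weakly in $L^4$ and the uniform convexity of $L^4$ this yields $Du_k\to Du$ strongly in $L^4_{loc}(B_1)$.

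The main obstacle is the no-concentration step. The argument works only because, as the authors emphasize, \eqref{epsreg2} and \eqref{epsreg3} carry an extra power of the critical energy: after rescaling, the ``constant'' term $\norm{Du_k}_{L^1}$ contributes only a small multiple of $\psi_k(2\rho)$ and the remaining energy terms are genuinely higher order, so that the decay iteration closes once $\varepsilon_0$ is small; the $f_k$-terms are controlled precisely because the $L\log L$-bound forces uniform integrability. The subsequent identification of the strong limits is comparatively soft, exploiting the fact that $\{D^2u_k\}$ and $\{Du_k\}$ are bounded one derivative better (in $W^{1,\frac43}$ and $W^{1,2}$ respectively) to extract a.e.\ convergence, together with the uniform convexity of $L^2$ and $L^4$.
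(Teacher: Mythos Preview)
Your proposal is correct and follows the paper's approach: derive no-concentration from the extra energy power in \eqref{epsreg2}, \eqref{epsreg3} (your explicit decay iteration is the same argument as the paper's triple limit $\lim_{R\to 0}\lim_{r\to 0}\limsup_k$, with the $f$-contributions controlled there via Lemma~\ref{fL1lemma} rather than de la Vall\'ee--Poussin), and then invoke Lemma~\ref{measures} and Corollary~\ref{measurecor}. The only cosmetic difference is in the last step for $Du_k$: you feed $|Du_k|^2$ into Corollary~\ref{measurecor} and conclude by uniform convexity of $L^4$, whereas the paper first gets $Du_k\to Du$ in $L^2_{loc}$ (using $\|Du_k\|_{L^2(B_r)}\le Cr\|Du_k\|_{L^4(B_r)}$ in Corollary~\ref{measurecor}) and then upgrades to $L^4_{loc}$ by Gagliardo--Nirenberg interpolation.
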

\begin{proof}We will first prove convergence of $Du_k$ to $Du$ and $D^2u_k$ to $D^2 u$ in $L^2_{loc}$ and then use Gagliardo-Nirenberg interpolation to get the $L^4$ convergence. 

Begin by choosing $0<\varepsilon_0 < \tilde \varepsilon$ from Proposition \ref{thirdderivs}. First note that the uniform bounds on $u_k$ in $W^{2,2}(B_2)$ implies that there exists a $u \in W^{2,2}_{loc}(B_2)$ such that $u_k \rightharpoonup u$ in $W^{2,2}_{loc}(B_2)$. We now show the strong convergence for the derivatives indicated.

Pick any $x_0 \in B_1$ and $2R \in (0, {\frac 34}]$. Then $B_{2R}(x_0) \subset B_2$. Let $\hat u_k(x):= u_k(x_0+ 2Rx), \hat f_k(x):= (2R)^4f_k(x_0+2Rx)$. Then $\hat u_k$ is an $\hat f_k$-approximate biharmonic map on $B_1$. 
From \eqref{epsreg2}, \eqref{epsreg3} we note that for any $r \in (0,1/2]$,
\begin{align*}
\|D\hat u_k\|_{L^4(B_r)}+&\|D^2 \hat u_k\|_{L^2(B_r)} \\
&\leq Cr(\|D\hat u_k\|_{L^4(B_2)}+\|D^2\hat u_k\|_{L^2(B_2)}) + C\left(\|D\hat u_k\|_{L^4(B_2)}^2+ \|D^2\hat u_k\|_{L^2(B_2)}^2 \right.\\
& \left.+( \|\hat f_k\|_{L^1(B_2)}\|\hat f_k\|_{L\log L(B_2)})^{\frac 12}+(\|\hat f_k\|_{L^1(B_2)}^3\|\hat f_k\|_{L\log L(B_2)})^{\frac 14} \right).
\end{align*}Using the scaling relations listed in Appendix \ref{scalingappend} and Lemma \ref{fLlogLlemma} we observe that
\begin{align*}
\|D u_k&\|_{L^4(B_{r2R}(x_0))}+\|D^2 u_k\|_{L^2(B_{r2R}(x_0))}\\
& \leq Cr(\|D u_k\|_{L^4(B_{2R}(x_0))}+\|D^2u_k\|_{L^2(B_{2R}(x_0))}) + C\left(\|D u_k\|_{L^4(B_{2R}(x_0))}^2+ \|D^2 u_k\|_{L^2(B_{2R}(x_0))}^2 \right.\\
& \left.+(\| f_k\|_{L^1(B_{2R}(x_0))}\|f_k\|_{L\log L(B_{2R}(x_0))})^{\frac 12}+( \|f_k\|_{L^1(B_{2R}(x_0))}^3\|f_k\|_{L\log L(B_{2R}(x_0))})^{\frac 14} \right).
\end{align*}
Lemma \ref{fL1lemma} and \eqref{globalest} together imply that
\[
\| f_k\|_{L^1(B_{2R}(x_0))} \leq C \left(\log\left(\frac 1{2R}\right)\right)^{-1}\|f_k\|_{L\log L(B_{2R}(x_0))}\leq C\Lambda \left(\log\left(\frac 1{2R}\right)\right)^{-1}.
\]Note that the right hand side goes to zero as $R \to 0$. Therefore, the small energy hypothesis implies that
\begin{align*}
\lim_{\substack{R \to 0}}\lim_{\substack{r \to 0}}&\lim_{\substack{k \to \infty}}\left(\|D u_k\|_{L^4(B_{r2R}(x_0))}+\|D^2 u_k\|_{L^2(B_{r2R}(x_0))}\right)\\
&\leq C\varepsilon_0\lim_{\substack{R \to 0}}\lim_{\substack{r \to 0}}\lim_{\substack{k \to \infty}}\left(\|D u_k\|_{L^4(B_{2R}(x_0))}+\|D^2 u_k\|_{L^2(B_{2R}(x_0))}\right).
\end{align*}Decreasing $\varepsilon_0$, if necessary, so that $\varepsilon_0< 1/C$, implies that
\[
\lim_{\substack{r \to 0}}\lim_{\substack{k \to \infty}}\left(\|D u_k\|_{L^4(B_{r}(x_0))}+\|D^2 u_k\|_{L^2(B_{r}(x_0))}\right)=0
\]for all $x_0 \in B_1$. Let $V_k = D^2 u_k$ and $V = D^2u$. Since $V_k \rightharpoonup V$ weakly in $L^2$ as measures and $V_k$ satisfies the hypotheses of Lemma \ref{measures} and Corollary \ref{measurecor} on $B_1$, $V_k \to V$ strongly in $L^2_{loc}(B_1)$.

Since $Du_k \rightharpoonup Du$ weakly as measures in $L^2(B_2)$ and 
\[
\lim_{\substack{r \to 0}}\lim_{\substack{k \to \infty}}\|D u_k\|_{L^2(B_{r}(x_0))} \leq \lim_{\substack{r \to 0}}\lim_{\substack{k \to \infty}}r\|D u_k\|_{L^4(B_{r}(x_0))}=0,
\]for all $x \in B_1$, Corollary \ref{measurecor} again implies that $Du_k \to Du$ strongly in $L^2_{loc}(B_1)$.

Now, for any $B_r(x) \subset B_1$, we consider the functions $w_k:= (u_k-u) -\, \mint_{B_r(x)} (u_k - u)$. Then, $Dw_k = D(u_k-u)$ and $D^2w_k = D^2(u_k-u)$. We apply the Gagliardo-Nirenberg interpolation inequality for $w_k$ and then the Poincar\'e inequality for the $L^2$ estimates on $w_k$ to conclude 
\[
\|Dw_k\|_{L^4(B_r(x))} \leq C\|D^2 w_k\|_{L^2(B_r(x))} \|Dw_k\|_{L^2(B_r(x))} + C \|Dw_k\|_{L^2(B_r(x))} .
\]
Then, using the strong convergence of $D^2u_k \to Du$ in $L^2_{loc}$ and $Du_k \to Du$ in $L^2_{loc}$ we conclude $Du_k \to Du$ in $L^4_{loc}(B_1)$.
\end{proof}

Finally, we prove the energy quantization result under the presumption of one bubble at the origin.

\begin{prop}\label{quantizeprop}
Let $f_k\in L\log L(B_1, \Real^{n+1})$ and $u_k \in W^{2,2}(B_1, \Ss^n)$ be a sequence of $f_k$-approximate biharmonic maps with bounded energy such that 
\[\begin{array}{ll}
u_k\to u &\text{in } W^{2,2}_{loc}(B_1\backslash \{0\}, \Ss^n)\\
\tilde u_k(x):= u_k(\lambda_k x)  \to \omega(x) &\text{in }W^{2,2}_{loc}(\mathbb R^4,\Ss^n).
\end{array} 
\]Presume further that $\omega$ is the only ``bubble'' at the origin.  Let $A_k(\delta, R):= \{ x | \lambda_k R \leq |x| \leq \delta\}$.
Then

\[
\lim_{R\to \infty} \lim_{\delta \to 0} \lim_{k \to \infty} \left( \|D^2 u_k\|_{L^2(A_k(\delta,R))}+ \|Du_k\|_{L^4(A_k(\delta,R))}+\|D\Delta u_k\|_{L^{\frac 43}(A_k(\delta,R))}\right)= 0.
\]
The proposition also holds if $u_k$ is a sequence of $f_k$-approximate intrinsic biharmonic maps.
\end{prop}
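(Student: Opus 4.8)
\emph{Proof plan.} Write $A_k:=A_k(\delta,R)=B_\delta\setminus B_{\lambda_kR}$ and $\mathcal F_k(\Omega):=\|D^2u_k\|_{L^2(\Omega)}+\|Du_k\|_{L^4(\Omega)}+\|D\Delta u_k\|_{L^{\frac43}(\Omega)}$; the goal is $\lim_R\lim_\delta\lim_k\mathcal F_k(A_k)=0$. The whole plan runs on Lorentz duality: for $p\in\{\frac43,2,4\}$ and any measurable $g$, $\|g\|_{L^p(\Omega)}^p\le C\|g\|^{p-1}_{L^{p,\infty}(\Omega)}\|g\|_{L^{p,1}(\Omega)}$ — the same inequality already used in the proof of Proposition \ref{thirdderivs}. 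Applying it with $(g,p)=(Du_k,4),(D^2u_k,2),(D\Delta u_k,\frac43)$ on $\Omega=A_k$ and using $A_k\subset B_\delta$, it is enough to establish: (i) a bound $\|Du_k\|_{L^{4,1}(B_\delta)}+\|D^2u_k\|_{L^{2,1}(B_\delta)}+\|D\Delta u_k\|_{L^{\frac43,1}(B_\delta)}\le C(\Lambda)$ that is uniform in $k$ and $\delta$; and (ii) the smallness $\|Du_k\|_{L^{4,\infty}(A_k)}+\|D^2u_k\|_{L^{2,\infty}(A_k)}+\|D\Delta u_k\|_{L^{\frac43,\infty}(A_k)}\le C(\varepsilon^{1/8}+(\log(1/\delta))^{-1})$, where $\varepsilon>0$ can be made arbitrarily small in the iterated limit. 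Together these give $\mathcal F_k(A_k)\le C(\Lambda)(\varepsilon^{1/8}+(\log(1/\delta))^{-1})^{1/4}$, whence $\limsup_R\limsup_\delta\limsup_k\mathcal F_k(A_k)\le C(\Lambda)\varepsilon^{1/32}$ for every $\varepsilon$, i.e.\ the triple limit is $0$.

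\emph{Step (i).} I would get the uniform $L^{p,1}$ bound by a single rescaling together with Proposition \ref{thirdderivs}. Fix $\delta<\frac12$, set $\hat u_k(x):=u_k(\delta x)$ and $\hat f_k(x):=\delta^4f_k(\delta x)$, an $\hat f_k$-approximate biharmonic map in $W^{2,2}(B_2,\Ss^n)$, and apply \eqref{epsreg1} to $\hat u_k$. The three left-hand norms are dilation invariant on $\Real^4$, so after undoing the scaling they equal, up to an absolute constant, $\|D\Delta u_k\|_{L^{\frac43,1}(B_\delta)}+\|D^2u_k\|_{L^{2,1}(B_\delta)}+\|Du_k\|_{L^{4,1}(B_\delta)}$ (recall $D\Delta u_k$ is a fixed combination of components of $D^3u_k$). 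On the right, H\"older gives $\|D\hat u_k\|_{L^1(B_2)}=\delta^{-3}\|Du_k\|_{L^1(B_{2\delta})}\le C\|Du_k\|_{L^4(B_{2\delta})}$ and $\|D\hat u_k\|^2_{L^2(B_2)}=\delta^{-2}\|Du_k\|^2_{L^2(B_{2\delta})}\le C\|Du_k\|^2_{L^4(B_{2\delta})}$, while $\|D^2\hat u_k\|^2_{L^2(B_2)}=\|D^2u_k\|^2_{L^2(B_{2\delta})}$; finally, since $\delta^4<1$ and $t\mapsto t\log(e+t)$ is convex and vanishes at $0$, a one-line Jensen estimate (the content of Lemma \ref{fLlogLlemma}) gives $\|\hat f_k\|_{L\log L(B_2)}\le\|f_k\|_{L\log L(B_{2\delta})}$. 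By \eqref{globalest} all four right-hand terms are $\le C(\Lambda)$, independently of $k$ and of $\delta$.

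\emph{Step (ii).} Lemma \ref{inftylemma}, applied to $u_k$ with $t=\lambda_kR$, yields (ii) once I know that for every $\varepsilon>0$ there exist $R_0,\delta_0,k_0$ such that, for $R\ge R_0$, $\delta\le\delta_0$, $k\ge k_0$, every dyadic annulus $B_{2\rho}\setminus B_\rho\subset B_{2\delta}\setminus B_{\lambda_kR/2}$ carries $\int_{B_{2\rho}\setminus B_\rho}(|Du_k|^4+|D^2u_k|^2+|D\Delta u_k|^{\frac43})<\varepsilon$. This ``no bubble in the neck'' statement I would prove by contradiction. A failure would produce, along subsequences, scales $\rho_m$ with $\rho_m/\lambda_{k_m}\to\infty$ and $\rho_m\to0$ on which $v_m(x):=u_{k_m}(\rho_m x)$ carries at least energy $\varepsilon$ on $B_2\setminus B_1$; choosing $\rho_m$ to maximise the dyadic energy over the neck, invoking the strong $\varepsilon$-compactness of Lemma \ref{strong2} (extended to $D\Delta u_k\to D\Delta u$ in $L^{\frac43}_{loc}$ by the same measure-theoretic argument, now via \eqref{epsreg4}), and noting that the rescaled forcing tends to $0$ in $L^1_{loc}$ because $\|\rho_m^4f_{k_m}(\rho_m\cdot)\|_{L^1(B_r)}=\|f_{k_m}\|_{L^1(B_{r\rho_m})}\le C(\log(1/(r\rho_m)))^{-1}\Lambda\to0$ by Lemma \ref{fL1lemma}, a subsequence of $v_m$ would converge strongly on compact subsets of $\Real^4\setminus\{0\}$ to a biharmonic map, nonconstant because it keeps energy $\varepsilon$ on $B_2\setminus B_1$ (if that energy instead escaped to a point one rescales once more). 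This produces a nontrivial finite-energy biharmonic bubble centred at the origin at a scale separated from $\lambda_{k_m}$ and from $1$, contradicting the hypothesis that $\omega$ is the only bubble at $0$.

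\emph{Intrinsic case and main obstacle.} The intrinsic case needs no change: by the remark after Proposition \ref{thirdderivs} the extra term $-d^*(|Du|^2Du\wedge u)$ leaves \eqref{epsreg1}--\eqref{epsreg4} and Lemma \ref{inftylemma} intact. I expect the real work to lie entirely in the ``no bubble in the neck'' claim of Step (ii): the rescaling scales and, when energy escapes, the secondary concentration points must be chosen carefully enough that the extracted bubble is genuinely distinct from $\omega$ and from the weak limit $u$, and the forcing must be controlled throughout — this is exactly where the $L\log L$ hypothesis on $f_k$ (and the convergence $f_k\to f$ in $L^p$, which pins down $u$ and $\omega$) enters, through Lemmas \ref{fL1lemma} and \ref{fLlogLlemma}. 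Step (i), the Lorentz interpolation, and the limit bookkeeping are then routine given the estimates of Section \ref{energy_section}.
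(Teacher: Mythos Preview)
Your proposal is correct and follows essentially the same route as the paper: Lorentz duality splits the task into a uniform $L^{p,1}$ bound (via \eqref{epsreg1} and \eqref{globalest}) and $L^{p,\infty}$ smallness on the neck (via Lemma \ref{inftylemma}), the latter fed by the dyadic-smallness claim proved through the same rescale-and-extract-a-second-bubble contradiction, with $\tilde f_k\to 0$ in $L^1_{loc}$ coming from Lemma \ref{fL1lemma}. The only cosmetic differences are that the paper first establishes boundary smallness near $\delta$ and near $\lambda_kR$ explicitly (using \eqref{epsreg4} to upgrade to the $D^3u$ term) before running the contradiction, and it applies \eqref{epsreg1} on the fixed ball without spelling out the rescaling you perform in Step (i).
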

\begin{proof}We first prove that for any $\varepsilon >0$ there exists $K$ sufficiently large and $\delta$ small so that for all $k \geq K$ and $\rho_k >0$ such that $B_{4\rho_k} \backslash B_{\rho_k/2} \subset A_k(\delta, R)$
\begin{equation}\label{dyadicest}
\|D^2 u_k\|_{L^2(B_{2\rho_k}\backslash B_{\rho_k})}+ \|Du_k\|_{L^4(B_{2\rho_k}\backslash B_{\rho_k})}+\|D\Delta u_k\|_{L^{\frac 43}(B_{2\rho_k}\backslash B_{\rho_k})}<\varepsilon.
\end{equation}

Since $\{0\}$ is the only point of energy concentration, the strong convergence of $D^2u_k \to D^2u$ in $L^2$ and $Du_k \to Du$ in $L^4$ implies that for any $\varepsilon>0$ and any $m\in \mathbb Z^+$ and $\delta$ sufficiently small, there exists $K:=K(m)$ sufficiently large such that for all $k \geq K(m)$,
\begin{align}\label{smallannulus1}
\|D^2 u_k\|_{L^2({B_{2\delta} \backslash B_{\delta 2^{-m-1}}})} + \|Du_k\|_{L^4({B_{2\delta} \backslash B_{\delta 2^{-m-1}}})}\leq \frac \varepsilon {C\Gamma^{m+3}}.
\end{align}Here $C$ is an appropriately large constant determined by the bounds of Proposition \ref{thirdderivs} and $\Gamma$ is the number of balls of radius $r/32$ needed to cover $B_{r}\backslash B_{\frac r2}$. By \eqref{epsreg4}, for any $x \in B_{2\delta}\backslash B_{\delta 2^{-m-1}}$ and $0<r<\delta 2^{-m-1}$,
\begin{align}
\|D\Delta u_k&\|_{L^{\frac 43}({B_{\frac r{32}}(x)})}\leq C\left(\|D^2u_k\|_{L^{2}(B_{\frac r2}(x))}+\|Du_k\|_{L^4(B_{\frac r2}(x))}+ \|f_k\|_{L^1(B_{\frac r2}(x))}^{\frac 14}\|f_k\|_{L\log L(B_{\frac r2}(x))}^{\frac 34} \right)
\label{smallannulus2}
\end{align}
Since Lemma \ref{fL1lemma} and \eqref{globalest} imply that
\begin{equation}\label{smallannulus3}
\|f_k\|_{L^1(B_{\frac r2}(x))}\leq C \left(\log\left(\frac 1{r}\right)\right)^{-1} \|f_k\|_{L\log L(B_{\frac r2}(x))},
\end{equation}
for sufficiently small $\delta$, \eqref{smallannulus1}, \eqref{smallannulus2}, and \eqref{smallannulus3} together imply that for $k \geq K(m)$
\begin{equation}\label{smallnorm1}
\|D\Delta u_k\|_{L^{\frac 43}({B_{2\delta} \backslash B_{\delta 2^{-m-1}}})}+\|D u_k\|_{L^4({B_{2\delta} \backslash B_{\delta 2^{-m-1}}})}+\|D^2 u_k\|_{L^2({B_{2\delta} \backslash B_{\delta 2^{-m-1}}})}\leq \varepsilon/2.
\end{equation}
A similar argument (perhaps requiring a larger $K$) implies that 
\begin{equation}\label{smallnorm2}
\|D\Delta u_k\|_{L^{\frac 43}({B_{2^m \lambda_k R} \backslash B_{\lambda_k R}})}+\|D u_k\|_{L^{4}({B_{2^m \lambda_k R} \backslash B_{\lambda_k R}})}+\|D^2 u_k\|_{L^{2}({B_{2^m \lambda_k R} \backslash B_{\lambda_k R}})} \leq \varepsilon/2.
\end{equation}
Now suppose there exists a sequence $t_k$ with $\lambda_k R < t_k < \delta$ such that
\[
\|D^2 u_k\|_{L^2(B_{2t_k}\backslash B_{t_k})}+ \|Du_k\|_{L^4(B_{2t_k}\backslash B_{t_k})}+\|D\Delta u_k\|_{L^{\frac 43}(B_{2t_k}\backslash B_{t_k})}\geq \varepsilon.
\]By \eqref{smallnorm1} and \eqref{smallnorm2}, $t_k \to 0$ and $B_{\delta/t_k} \backslash B_{\lambda_k R/t_k} \to \Real^4 \backslash \{0\}$. Define $v_k(x) = u_k(t_kx)$ and $\tilde f_k(x):= t_k^4 f_k( t_kx)$. Then $v_k$ is an $\tilde f_k$-approximate biharmonic map, defined on $B_{t_k^{-1}}$. We first observe that $v_k \to v_\infty$ weakly in $W^{2,2}_{loc}(\Real^4,\Ss^n)$. Notice for any $R>0$
\begin{align*}
\int_{B_R} |\tilde f_k(x)|dx &= \int_{B_{Rt_k}} |f_k(s)| ds \\
&\leq \int_0^{|B_{Rt_k}|} (f_k)^*(t) dt \\
&\leq  c\left(\log\left(2+\frac 1{Rt_k}\right)\right)^{-1}\int_0^\infty (f_k)^*(t) \log(2+\frac 1t)dt\\
&=c \left(\log\left(2+\frac 1{Rt_k}\right)\right)^{-1}\|f_k\|_{L \log L(B_1)}.
\end{align*}By \eqref{globalest} $\tilde f_k \to 0$ in $L^1_{loc}(\Real^4)$. Moreover, for all $k$
\[
\|D^2v_k\|_{L^2(B_2\backslash B_1)} + \|Dv_k\|_{L^4(B_2 \backslash B_1)}+ \|D\Delta v_k\|_{L^{\frac 43}( B_2\backslash B_1)} \geq \varepsilon.
\] If $v_k \to v_\infty$ strongly in $W^{2,2}(B_{16} \backslash B_{1/16}, \Ss^n)$, then $v_\infty$ is a non-constant biharmonic map into $\Ss^n$. Note that by Proposition \ref{thirdderivs} we get 
\[
\|D^2 v_\infty\|_{L^2(B_2 \backslash B_1)} + \|Dv_\infty\|_{L^4(B_2\backslash B_1)} > 0.
\]This contradicts the fact that there is only one bubble at $\{0\}$. If the convergence is not strong, then Lemma \ref{strong2} implies that the energy must concentrate. That is, there exists a subsequence $v_k$ such that $\|D^2 v_k\|_{L^2(B_r(x))} + \|Dv_k\|_{L^4(B_r(x))} \geq \varepsilon_0^2$ for all $r>0$. This also contradicts the existence of only one bubble. Thus, \eqref{dyadicest} holds.

Using the duality of Lorentz spaces and the estimates of Appendix \ref{estap} we get the bounds
\begin{align}
\|D^2 u_k\|_{L^2}^2 &\leq C\|D^2u_k\|_{L^{2,\infty}}\|D^2 u_k\|_{L^{2,1}},\label{duallor}\\
\|Du_k\|_{L^4}^4 &\leq C\|\, |Du_k|^3\|_{L^{{\frac 43},\infty}}\|Du_k\|_{L^{4,1}}\leq C\|Du_k\|^3_{L^{4,\infty}}\|Du_k\|_{L^{4,1}},\nonumber\\
\|D\Delta u_k\|_{L^{{\frac 43}}}^{{\frac 43}}& \leq C\|(D\Delta u_k)^{1/3}\|_{L^{4,\infty}}\|D\Delta u_k\|_{L^{{\frac 43},1}} \leq C \|D\Delta u_k\|^{\frac 13}_{L^{{\frac 43},\infty}}\|D\Delta u_k\|_{L^{{\frac 43},1}}. \nonumber
\end{align}

Using \eqref{globalest} and \eqref{epsreg1} we observe that
\[
\|D^2 u_k\|_{L^{2,1}}+ \|Du_k\|_{L^{4,1}}+\|D\Delta u_k\|_{L^{{\frac 43},1}} \leq C\Lambda.
\]Since \eqref{dyadicest} allows us to apply Lemma \ref{inftylemma}, appealing to \eqref{duallor} implies the result.
\end{proof}

The full proof of Theorem \ref{bubblethm} now follows immediately from the uniform energy bounds of \eqref{globalest}, the small energy compactness results of this section, and standard induction arguments on the bubbles.
\section{Oscillation Bounds}\label{oscillation_section}
 The proof of the following oscillation lemma will constitute the work of this section.

\begin{lem}\label{osclemma}
Let $u\in W^{2,2}(B_1, \Ss^n)$ be an $f$-approximate biharmonic map for $f \in L\log L(B_1, \Real^{n+1})$ with
\[
\|D^2u\|_{L^2(B_1)}+\|Du\|_{L^4(B_1)}+ \|f\|_{L \log L(B_1)} \leq \Lambda < \infty.
\] Then for $0<2t<\delta/2<1/16$, 
\begin{align*}
\sup_{x,y \in B_{\delta/2} \backslash B_{2t}} |u(x)-u(y)| \leq &C\left( \|D^2u\|_{L^2(B_{2\delta} \backslash B_t)} + \|Du\|_{L^4(B_{2\delta} \backslash B_t)} + \|f\|_{L\log L(B_{2\delta})}\right.\\
&\qquad + \left. \|D \Delta u\|_{L^{\frac 43}(B_{2\delta} \backslash B_{t})}+\|D \Delta u\|_{L^{\frac 43,1}(B_{2t} \backslash B_{t})}+|B_{4\delta}|\right).
\end{align*}
The lemma also holds if $u$ is an $f$-approximate intrinsic biharmonic map.
\end{lem}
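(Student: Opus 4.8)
The plan is to reduce the oscillation bound to an estimate for $\|D\Delta w\|_{L^{\frac43,1}}$, where $w$ is a suitable cutoff of $u$ on the annular region, and then establish that estimate via a Hodge decomposition of the matrix-valued one-form $\beta = D\Delta w \wedge u - \Delta w \wedge Du$, exploiting the divergence structure of \eqref{biharmoniceq}. First I would fix $w := \phi_{\delta}(1-\phi_{2t})(u - \overline u)$ for an appropriate average $\overline u$, so that $w$ agrees with $u-\overline u$ on $B_{\delta/2}\backslash B_{2t}$ and is supported in $B_{2\delta}\backslash B_t$. By the Sobolev embedding $W^{3,\frac43}(\mathbb R^4) \hookrightarrow C^0$ and a Green's-function representation $w = G \ast \Delta^2 w$ with $\Delta^2 G = \delta_0$ (so that $DG$ has the pointwise bound $|DG(x)|=C|x|^{-1}$ recorded in the appendix, and $D^2\Delta G$ maps $L^{\frac43,1}\to L^\infty$ by the estimates of Appendix \ref{estap}), one gets
\[
\osc_{B_{\delta/2}\backslash B_{2t}} u \leq C\left(\|D\Delta w\|_{L^{\frac43,1}} + (\text{lower order cutoff terms})\right),
\]
where the lower order terms come from differentiating $\phi_\delta$ and $\phi_{2t}$ and are controlled by $\|D^2u\|_{L^2(B_{2\delta}\backslash B_t)} + \|Du\|_{L^4(B_{2\delta}\backslash B_t)}$ plus the $|B_{4\delta}|$ volume term (the latter absorbing the constant part of $w$). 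This is the duality argument Zhu uses in the second-order case, adapted to fourth order.

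Next I would compute $\Delta^2 w$ on the region where $w = u - \overline u$ using \eqref{deltasquaredeq}, and decompose $\beta = D\Delta w \wedge u - \Delta w \wedge Du$. On the support of $\tilde\phi := \phi_\delta(1-\phi_{2t})$, the structural identity \eqref{biharmoniceq} says $d^*(D\Delta u \wedge u - \Delta u \wedge Du) = f\wedge u$, so the ``genuine'' (cutoff-independent) part of $d^*\beta$ is $f\wedge u$ plus terms produced by $D\tilde\phi$, $D^2\tilde\phi$, $D^3\tilde\phi$ hitting $u$, $Du$, $\Delta u$, $D\Delta u$. Writing the Hodge decomposition $\beta = D\alpha + d^*\gamma + h$ with $h$ harmonic, I would bound $\|D\alpha\|_{L^{\frac43,1}}$ from $d^*\beta$ via the Calderón–Zygmund/Lorentz estimates of the appendix: $d^*\beta$ consists of $f\wedge u$ (handled in $L\log L$, giving an $L^{\frac43,1}$ potential bound) together with the cutoff commutator terms, each of which carries an extra negative power of the relevant radius and is therefore controlled by $\|D^2u\|_{L^2}$, $\|Du\|_{L^4}$, $\|D\Delta u\|_{L^{\frac43}}$ on the appropriate dyadic annuli. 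The co-exact part $d^*\gamma$ is governed by $d\beta$; here the wedge structure is essential, since $d(D\Delta w\wedge u) = D\Delta w \wedge Du$ up to cutoff terms, so $d\beta$ is again quadratic in $(Du,\Delta u,D\Delta u)$ with no bad terms, giving an $L^{\frac43,1}$ bound. The harmonic part $h$ is estimated by the mean-value property as in Proposition \ref{thirdderivs}, producing the $|B_{4\delta}|$-type term. Assembling, $\|D\Delta w\|_{L^{\frac43,1}} \lesssim \|\beta\|_{L^{\frac43,1}} + (\text{cutoff terms}) \lesssim$ the right-hand side of the lemma, where the term $\|D\Delta u\|_{L^{\frac43,1}(B_{2t}\backslash B_t)}$ appears precisely from the innermost cutoff annulus $B_{2t}\backslash B_t$ on which we cannot trade a factor of the radius for smallness and must keep the $L^{\frac43,1}$ norm, and $\|D\Delta u\|_{L^{\frac43}(B_{2\delta}\backslash B_t)}$ (rather than $L^{\frac43,1}$) suffices on the outer region because there we can exploit the radial weights.

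The main obstacle I anticipate is controlling the Hodge-decomposition error terms uniformly in the two independent scales $t$ and $\delta$: the cutoff $\tilde\phi$ has derivatives concentrated on two far-apart annuli, $B_{2t}\backslash B_t$ and $B_{2\delta}\backslash B_\delta$, each producing commutator terms like $D^3\tilde\phi \cdot (u-\overline u)$, $D^2\tilde\phi\cdot Du$, $D\tilde\phi\cdot \Delta u$, and making these land in $L^{\frac43,1}$ (not merely $L^{\frac43}$) with the correct scale-invariant bound — in particular showing that the lowest-order term $D^3\tilde\phi\cdot(u-\overline u)$ is absorbed by a Poincaré inequality on the relevant annulus — requires care with the Lorentz-space Sobolev and convolution estimates from the appendix. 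A secondary subtlety is the passage from $f\wedge u \in L\log L$ to an $L^{\frac43,1}$ bound on the corresponding piece of $D\Delta w$; this uses the embedding $L\log L \ast D^3G \hookrightarrow L^{\frac43,1}$ established in the appendix and Lemma \ref{fL1lemma}, exactly as in the $\psi$-estimate of Proposition \ref{thirdderivs}. Once these estimates are in place, the lemma follows by combining the oscillation-duality bound with the $L^{\frac43,1}$ estimate for $D\Delta w$.
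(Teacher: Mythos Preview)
Your overall architecture matches the paper's: reduce the oscillation to $\|D\Delta w\|_{L^{\frac43,1}}$, set $\beta = D\Delta w\wedge u - \Delta w\wedge Du$, Hodge-decompose $\beta$ on $B_{2\delta}$, and bound the exact, co-exact, and harmonic pieces separately. There is, however, one genuine gap. You write ``Assembling, $\|D\Delta w\|_{L^{\frac43,1}} \lesssim \|\beta\|_{L^{\frac43,1}} + (\text{cutoff terms})$'', but this passage is not automatic: $\beta$ only captures the \emph{tangential} part of $D\Delta w$ relative to $u\in\Ss^n$. On $B_\delta\backslash B_{2t}$ one has $D\Delta w\wedge u = \beta + \Delta u\wedge Du$, which controls $(D\Delta u)^T$, but to recover the full $D\Delta w$ you must also handle the \emph{normal} component $\langle D\Delta u, u\rangle u$. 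The paper does this via the identity
\[
\langle D\Delta u,u\rangle = D\langle\Delta u,u\rangle - \langle\Delta u,Du\rangle = -D|Du|^2 - \langle\Delta u, Du\rangle,
\]
which is where the sphere target is used a second time (beyond \eqref{biharmoniceq}). This gives $|(D\Delta u)^N|\le C|D^2u|\,|Du|$ pointwise, hence the required $L^{\frac43,1}$ control by the product estimate in Appendix~\ref{estap}. Without this step your argument cannot close the loop from $\beta$ back to $D\Delta w$.

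Two smaller points. First, the paper subtracts an \emph{affine} map $u_1(\xX)=\mathbf b + A\xX$ (not just a constant $\overline u$) before the inner cutoff, so that both $u-u_1$ and $D(u-u_1)$ have zero mean on $B_{2t}\backslash B_t$ and Poincar\'e can be iterated down to $\|D^2u\|_{L^2}$; the outer cutoff is then applied separately after a second affine correction $w_1$. Your single combined cutoff $\phi_\delta(1-\phi_{2t})(u-\overline u)$ can be made to work, but the inner commutator bookkeeping is cleaner with the affine subtraction. Second, your attribution of the $|B_{4\delta}|$ term is off: it does not come from the harmonic piece. It arises from the $\mathcal H^1$ estimate of Lemma~\ref{lphardy} applied to the source term $\gamma$ in the $\Psi$-equation (Proposition~\ref{psiest}); the harmonic part $H$ is instead bounded via the a priori $L^{\frac43}$ estimate on $\beta$ together with the mean value property, and contributes the same annular energy terms already present on the right-hand side.
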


Consider the map $u_1:B_1 \to \Real^{n+1}$ such that $u_1(\xX) = \mathbf b + A\xX$ where $\mathbf b \in \Real^{n+1}$ and $A$ is an $(n+1)\times 4$ matrix with
\[
A:= \mint_{B_{2t}\backslash B_t} Du, \; \; \mathbf b:= \mint_{B_{2t}\backslash B_t} (u(\xX) - A\xX) dVol(\xX).
\]Then by construction
\[
\mint_{B_{2t}\backslash B_t} u - u_1=0, \; \; \; \mint_{B_{2t}\backslash B_t} Du- Du_1=0, \; \; \; D^k u_1 \equiv 0 \text{ for all } k \geq 2.
\]Set $w = (1-\phi_t)(u-u_1)$. 
Let $w_1:B_1 \to \Real^{n+1}$ such that $w_1(\xX) = \mathbf m + N\xX$ where
\[
N:= \mint_{B_{\delta}\backslash B_{\delta/2} }Dw, \; \; \mathbf m:= \mint_{B_{\delta}\backslash B_{\delta/2}} (w(\xX) - N\xX) dVol(\xX).
\]
Let $\tilde w =(w-w_1) \phi_{\delta/2}$ so $\tilde w= w-w_1$ on $B_{\delta/2}$ and the support of $\tilde w$ is contained in ${B_\delta}$.

By definition
\begin{align*}
\sup_{x,y \in B_{\delta/2} \backslash B_{2t}} |u(x)-u(y)| &= \sup_{x,y \in B_{\delta/2} \backslash B_{2t}} |w(x)-w(y)+u_1(x)-u_1(y)|\\
& =  \sup_{x,y \in B_{\delta/2} \backslash B_{2t}} \left| (\tilde w + u_1+w_1)(x) - (\tilde w + u_1+w_1)(y)\right|\\
& \leq 2 \sup_{x \in B_{\delta/2} \backslash B_{2t}} \left| \tilde w(x)-\tilde w(0) +(A+N)x\right|. 
\end{align*}
We first observe that outside of $B_{2t}$, $w=u-u_1$ so the definition of $N$ implies that
\[
A+N = A+\mint_{B_{\delta}\backslash B_{\delta/2} } Du - \mint_{B_{\delta}\backslash B_{\delta/2} } A = \mint_{B_{\delta}\backslash B_{\delta/2} } Du.
\] Thus, for $x \in B_{\delta/2}$, H\"older's inequality implies that
\[
|(A+N)x| \leq C \delta^{-3} \int_{\Bd} |Du| \leq C \|Du\|_{L^4(\Bd)}.
\]
As before, let $G$ be the distribution in $\Real^4$ such that $\Delta^2 G = \delta_0$. Then $G(x) = C\log |x|$ and recall that $DG\in L^{4,\infty}(\Real^4)$. It is enough to show that
\begin{claim}
\[| \tilde w(x) - \mint_{\Real^4}\tilde w| \leq C \|D\Delta \tilde w\|_{L^{\frac 43,1}(\Real^4)}\]
\end{claim}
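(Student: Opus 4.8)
The plan is to represent $\tilde w$ via the fundamental solution of the bidharmonic operator and then exploit Lorentz space duality. Since $\tilde w$ has compact support in $B_\delta$, we may write $\tilde w = G \ast \Delta^2 \tilde w$, and hence, after integrating by parts three times to move three derivatives onto $G$,
\[
\tilde w(x) - \mint_{\Real^4} \tilde w = \int_{\Real^4} \left( D^3 G(x-y) - \mint_{\Real^4} D^3 G(z-y)\, dz \right) \cdot D\Delta \tilde w(y)\, dy,
\]
where I have used that $\Delta^2 \tilde w = d^*(D\Delta \tilde w)$ up to the appropriate arrangement of derivatives (equivalently, $\Delta^2 = \Delta \circ \Delta$ and one writes $\Delta^2 \tilde w$ as three derivatives of $D\Delta\tilde w$ in the sense of distributions). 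The point of subtracting the average is that the kernel $K(x,y) := D^3G(x-y) - \mint D^3 G(z-y)\, dz$, viewed for fixed $x$ as a function of $y$, lies in the Lorentz space $L^{4,\infty}$ uniformly in $x$: indeed $D^3 G(x) = C|x|^{-3}$, which is exactly the borderline $L^{4,\infty}(\Real^4)$ function, and subtracting a constant (the average over the bounded set where $\tilde w$ is supported) does not affect membership in $L^{4,\infty}$, only the bounded low-frequency part. So $\|K(x,\cdot)\|_{L^{4,\infty}(\Real^4)} \le C$ with $C$ independent of $x$.

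Granting that uniform bound, the claim follows from the duality pairing $|\langle g, h\rangle| \le C\|g\|_{L^{4,\infty}}\|h\|_{L^{4/3,1}}$ (the duality $(L^{4/3,1})^* = L^{4,\infty}$, recorded in the appendix): applying it with $g = K(x,\cdot)$ and $h = D\Delta \tilde w$ gives
\[
\left| \tilde w(x) - \mint_{\Real^4} \tilde w \right| \le C\, \|K(x,\cdot)\|_{L^{4,\infty}(\Real^4)}\, \|D\Delta \tilde w\|_{L^{4/3,1}(\Real^4)} \le C\, \|D\Delta \tilde w\|_{L^{4/3,1}(\Real^4)},
\]
which is exactly the assertion of the claim. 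Taking the supremum over $x$ is harmless since the bound is uniform.

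The main obstacle is the justification of the integral representation together with the uniform $L^{4,\infty}$ bound on the averaged kernel. One has to be careful that $\tilde w \in W^{2,2}$ with compact support is regular enough that $\Delta^2 \tilde w$ makes sense distributionally and that the threefold integration by parts producing $D^3 G$ against $D\Delta \tilde w$ is legitimate — this is standard (approximate $\tilde w$ by smooth compactly supported functions, or note $D\Delta\tilde w \in L^{4/3}$ by Proposition \ref{thirdderivs}-type estimates and $D^3 G \in L^{4,\infty}_{loc}$ so the convolution is well-defined). The estimate $\|D^3 G(x-\cdot) - \mint D^3 G\|_{L^{4,\infty}} \le C$ uniformly in $x$ reduces to: $|x|^{-3} \in L^{4,\infty}(\Real^4)$ with norm independent of translation (translation invariance of the Lorentz quasinorm), plus the fact that the subtracted average is a finite constant whose contribution on the bounded support region is controlled. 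Everything else is bookkeeping with the Lorentz duality inequality from the appendix.
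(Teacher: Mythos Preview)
Your overall strategy (fundamental solution of $\Delta^2$ plus Lorentz duality) is the same as the paper's, but there is a derivative-counting error that makes the proof as written incorrect.

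Starting from $\tilde w = G * \Delta^2 \tilde w$, you want to reach $D\Delta \tilde w$ on the right. Since $\Delta^2$ is fourth order and $D\Delta$ is third order, you move \emph{one} derivative onto $G$, obtaining the kernel $DG$, not $D^3G$. Your formula pairs $D^3G$ with $D\Delta\tilde w$, which accounts for six derivatives where only four are available. More seriously, your claim that $D^3G(x)=C|x|^{-3}$ lies in $L^{4,\infty}(\Real^4)$ is false: in $\Real^4$ one has $|x|^{-\alpha}\in L^{4/\alpha,\infty}$, so $|x|^{-3}\in L^{4/3,\infty}$, which is the \emph{wrong} side of the duality you want to invoke. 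The function that belongs to $L^{4,\infty}(\Real^4)$ is $|x|^{-1}$, and that is precisely $|DG|$. With the correct kernel $DG$, the duality $L^{4,\infty}\times L^{4/3,1}$ applies and the claim follows---this is exactly what the paper does, in three lines, after translating so that $x=0$:
\[
\Big|\tilde w(0)-\mint \tilde w\Big| = \Big|\int DG(y)\cdot D\Delta\tilde w(y)\,dy\Big| \le C\|DG\|_{L^{4,\infty}}\|D\Delta\tilde w\|_{L^{4/3,1}}.
\]
Your additional device of subtracting the average of the kernel is not needed (and is ill-defined as written, since $\mint_{\Real^4}D^3G$ is a divergent integral); the paper simply uses translation invariance and the pairing with $DG$ directly.
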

Since all of the above quantities are translation invariant, we may assume $x=0$. Then
\begin{align*}
\left| \tilde w(0)-\mint \tilde w\right|&= \left |\int_{\Real^4} \Delta^2 G(y)(\tilde w (y)-\mint \tilde w)dV(y)\right| \\&= \left |\int_{\Real^4} D G(y) D\Delta\tilde w (y) dV(y)\right| \\&\leq C \|DG\|_{L^{4,\infty}(\Real^4)} \|D\Delta \tilde w\|_{L^{\frac 43,1}(\Real^4)}.
\end{align*}
Using the definition of $\tilde w$,
\begin{align*}
\|D\Delta \tilde w\|_{L^{\frac 43,1}(\Real^4)} \leq &C\|\left(\delta^{-3}|w-w_1| + \delta^{-2}|D(w-w_1)| + \delta^{-1}|D^2w|\right)\|_{L^{\frac 43,1}(\Bd)} \\
& + C\|D\Delta w\|_{L^{\frac 43,1}(B_{\delta})}.
\end{align*}Interpolation techniques and Poincar\'e's inequality imply that
\begin{align*}
\|\delta^{-3} (w-w_1)\|_{L^{\frac 43,1}(\Bd)} &\leq C \|\delta^{-2} D(w-w_1)\|_{L^{\frac 43,1}(\Bd)}\\
&\leq C \|\delta^{-1} D^2w\|_{L^{\frac 43,1}(\Bd)}.
\end{align*}Moreover, the embedding theorems for Lorentz spaces imply that
\[
\|\delta^{-1} D^2w\|_{L^{\frac 43,1}(\Bd)}\leq C\|D^2 w\|_{L^2(\Bd)}.
\]
Therefore,
\begin{equation}\label{thirdwest}
\|D\Delta \tilde w\|_{L^{\frac 43,1}(\Real^4)}\leq C \|D^2 w\|_{L^2(\Bd)} + C\|D\Delta w\|_{L^{\frac 43,1}(B_{\delta})}.
\end{equation}
Since $D^2 w = D^2u$ on $B_\delta \backslash B_{2t}$, we conclude
\begin{equation}\label{oscest}
\osc_{ B_{\delta/2} \backslash B_{2t}} u\leq C\left(\|D\Delta w\|_{L^{\frac 43,1}(B_{\delta})} +  \|D^2 u\|_{L^2(\Bd)}+\|Du\|_{L^4(\Bd)}\right).
\end{equation}The remainder of the proof will be devoted to bounding the $D\Delta w$ term.

We define $\beta = D\Delta w \wedge u - \Delta w \wedge Du$. Then $\beta^{ij}:= u^jD\Delta w^i  -u^i D\Delta w^j - \Delta w^i Du^j + \Delta w^j Du^i \in \Omega^1\Real^4$ for $i,j = 1, \dots, n+1$. By definition $\beta = D\Delta u \wedge u - \Delta u \wedge Du$ in $B_\delta \backslash B_{2t}$ and thus $d^*\beta = f \wedge u$ in $B_\delta \backslash B_{2t}$. We will require an $L^{\frac 43}$ bound for $\beta$ and to that end note that
\begin{align}
\begin{split}
\label{betaest}\|\beta\|_{L^{\frac 43}(B_{2\delta})} & \leq C\left(\|D\Delta w\|_{L^{\frac 43}(B_{2\delta})} + \|\Delta w \wedge Du\|_{L^{\frac 43}(B_{2\delta})} \right)\\
&\leq C\left(\|D\Delta w\|_{L^{\frac 43}(B_{2\delta})}+ \|\Delta w\|_{L^2(B_{2\delta})}\|Du\|_{L^4(B_{2\delta})}\right) \\
&\leq C\left(\|D\Delta u\|_{L^{\frac 43}(B_{2\delta}\backslash B_{t})} + \|D^2 u\|_{L^2(B_{2\delta} \backslash B_t)} \right).
\end{split}
\end{align}For the last inequality, $\|Du\|_{L^4(B_{2\delta})}$ is bounded and is absorbed into the constant. In addition, we use the definition of $w$ and repeated applications of Poincar\'e and H\"older to determine
\[
\begin{array}{l}
\|D\Delta w\|_{L^{\frac 43}(B_{2\delta})} \leq C\left(\|D^2u\|_{L^2(\Bt)} + \|(1-\phi_t) D\Delta u\|_{L^{\frac 43}(B_{2\delta})} \right),\\
\|\Delta w\|_{L^2(B_{2\delta})} \leq C \|D^2 u\|_{L^2(B_{2\delta}\backslash B_t)}.
\end{array}
\]
 Set 
\[\gamma:=d^*\left(D\Delta(w-u)\wedge u - \Delta (w-u)\wedge Du\right).
\]Then
\[
d^* \beta = f \wedge u + \gamma; \qquad \qquad d\beta =-2D\Delta w \wedge Du,
\]
\[
\Delta \beta = (dd^* + d^*d) \beta = d(f \wedge u + \gamma) + d^*(-2D\Delta w \wedge Du).
\]We consider a decomposition for each component $\beta^{ij}$ such that $\beta^{ij} = H^{ij} + d\Psi^{ij} + d^* \Phi^{ij}$ where $H^{ij}$ is a harmonic one-form and $\Phi, \Psi$ satisfy appropriate partial differential equations. Our objective is to bound $\|D\Delta w\|_{L^{\frac 43,1}}$ by $\|\beta\|_{L^{\frac 43,1}}$ and to that end we determine such bounds for $d\Psi, d^* \Phi, H$.

\begin{rem}
For the intrinsic case, we modify a few definitions. Let $\beta_I := \beta + 2|Du|^2 Dw_I \wedge u$ where $w_I = (1-\phi_t)(u - \mathbf d)$ and $\mathbf d:= \mint_{\Bt} u$. Using the definition of $w_I$, we get the bound $\|\beta_I\|_{L^{\frac 43}(B_{2\delta})}\leq \|\beta\|_{L^{\frac 43}(B_{2\delta})}+ C\|Du\|_{L^4(B_{2\delta}\backslash B_t)}$ by using H\"older's inequality and Poincar\'e's inequality. We then define $\gamma_I:= \gamma + d^*(2|Du|^2 D(w_I-u) \wedge u)$ and thus 
\[
d^*\beta_I = f \wedge u + \gamma_I; \qquad \qquad d\beta_I = d\beta + D(|Du|^2)Dw_I \wedge u - |Du|^2 Dw_I \wedge Du.
\]
\end{rem}
We now continue with the proof for the extrinsic case.
 \begin{prop}\label{psiest}Let $\Psi^{ij}$ be a function on $B_{2\delta}$ satisfying 
\[
\left\{\begin{array}{ll} \Delta \Psi^{ij} = f^iu^j-f^ju^i+\gamma^{ij}& \text{in } B_{2\delta}\\
\Psi^{ij} = 0 & \text{on } \partial B_{2\delta}.
\end{array}\right.
\]Then
\begin{align*}
\|d \Psi^{ij}\|_{L^{\frac 43,1}(B_{2\delta})} \leq& C\left(\|D^2u\|_{L^2(\Bt)} + \|Du\|_{L^4(\Bt)} + \|D\Delta u\|_{L^{\frac 43}(\Bt)}\right. \\
& \qquad\left. + \|f\|_{L\log L(B_{2\delta})}+|B_{4\delta}|\right).
\end{align*}
\end{prop}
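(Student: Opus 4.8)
The plan is to estimate $\|d\Psi^{ij}\|_{L^{\frac 43,1}(B_{2\delta})}$ by exploiting the fact that $\Psi^{ij}$ solves a second order Poisson equation with right-hand side $f^iu^j - f^ju^i + \gamma^{ij}$. The fundamental solution of $\Delta$ in $\Real^4$ has gradient in $L^{2,\infty}$, so $d\Psi^{ij} = D\Psi^{ij}$ is given (up to boundary corrections coming from the Dirichlet condition on $\partial B_{2\delta}$) by convolution of the right-hand side with a kernel mapping $L^1 \to L^{\frac 43, \infty}$ and, more importantly for our purposes, mapping $L^{\frac 43}$-type and $L^1$-type data into $L^{\frac 43,1}$ after pairing against the improved integrability of $f$. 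Concretely, I would split the source into the \emph{torsion part} $f^iu^j - f^ju^i$ and the \emph{commutator/error part} $\gamma^{ij}$ and handle each separately.

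For the torsion part, since $|u|=1$ pointwise on $\Ss^n$, we have $|f^iu^j - f^ju^i| \leq 2|f|$, and $f \in L\log L(B_{2\delta})$. The key input from Appendix \ref{estap} is that the Green operator for $\Delta$ on a ball, applied to an $L\log L$ function, gains one derivative into $L^{\frac 43,1}$; that is, $\|D(\Delta^{-1}g)\|_{L^{\frac 43,1}(B_{2\delta})} \leq C\|g\|_{L\log L(B_{2\delta})}$, exactly as in the estimates for $\psi$ in the proof of Proposition \ref{thirdderivs}. This yields the $\|f\|_{L\log L(B_{2\delta})}$ term. The appearance of $|B_{4\delta}|$ is a harmless additive error absorbing boundary terms and the fact that the Green kernel estimates on a bounded domain carry lower order contributions proportional to the volume.

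For $\gamma^{ij}$, recall $\gamma = d^*(D\Delta(w-u)\wedge u - \Delta(w-u)\wedge Du)$, and $w - u = -\phi_t(u-u_1) - (1-\phi_t)u_1$ is supported where $\phi_t \not\equiv 1$, i.e. essentially in $\Bt$ together with the far region where $1-\phi_t \neq 1$; but on $B_{2\delta} \setminus B_t$ the term $D\Delta(w-u)$ is concentrated on $\Bt$ because $w = u - u_1$ there and $D^k u_1 \equiv 0$ for $k\geq 2$. Since $\gamma$ has the divergence structure $d^*(\text{something})$, when we solve $\Delta\Psi^{ij} = \cdots + \gamma^{ij}$ and take $D\Psi^{ij}$, the operator $D\Delta^{-1}d^*$ is a singular integral of Calder\'on--Zygmund type, bounded on $L^{\frac 43,1}$, and we can move everything onto the $\Bt$-supported quantity $D\Delta(w-u)\wedge u - \Delta(w-u)\wedge Du$ directly. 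Estimating that quantity in $L^{\frac 43,1}(\Bt)$ via the Lorentz embedding $L^2 \hookrightarrow L^{\frac 43,1}$ on the fixed-geometry annulus, together with Poincar\'e and H\"older as in \eqref{betaest} and the displayed estimates for $\|D\Delta w\|_{L^{\frac 43}(B_{2\delta})}$, produces the terms $\|D^2u\|_{L^2(\Bt)} + \|Du\|_{L^4(\Bt)} + \|D\Delta u\|_{L^{\frac 43}(\Bt)}$.

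The main obstacle will be keeping careful track of the cutoff-induced error terms: the difference $w - u$ involves both the cutoff $\phi_t$ near the inner annulus $\Bt$ and the affine corrections $u_1$, so $\gamma$ is genuinely supported on the inner annulus but its expression mixes derivatives of $\phi_t$ (which scale like negative powers of $t$) against $u - u_1$ (which, by the choice of $\mathbf b$ and $A$, has vanishing average and vanishing average gradient on $\Bt$, so Poincar\'e applies twice). One must verify that every factor of $t^{-1}$ from differentiating $\phi_t$ is matched by a Poincar\'e gain, so that all $\Bt$-terms reduce cleanly to $\|D^2 u\|_{L^2(\Bt)}$ and lower, with no residual negative powers of $t$. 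Once that bookkeeping is done, assembling the pieces — torsion via the $L\log L$ Green estimate, commutator via the $L^{\frac 43,1}$ boundedness of $D\Delta^{-1}d^*$ applied to the $\Bt$-supported one-form — gives the claimed bound, with $|B_{4\delta}|$ absorbing the boundary and lower-order terms.
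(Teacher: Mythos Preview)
Your treatment of the torsion part $f^iu^j-f^ju^i$ matches the paper's. The gap is in the handling of $\gamma^{ij}$. You assert that the one-form $\alpha:=D\Delta(w-u)\wedge u-\Delta(w-u)\wedge Du$ is ``$\Bt$-supported'' and then invoke the $L^{\frac43,1}$-boundedness of $D\Delta^{-1}d^*$ on $\alpha$. But $\alpha$ is supported on all of $B_{2t}$, not just the annulus: since $w-u=-\phi_t(u-u_1)-u_1$ and $\phi_t\equiv 1$ on $B_t$, one has $\Delta(w-u)=-\Delta u$ and $D\Delta(w-u)=-D\Delta u$ throughout $B_t$, so there $\alpha=-(D\Delta u\wedge u-\Delta u\wedge Du)$. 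Your Calder\'on--Zygmund step would thus require control of $\|D\Delta u\|_{L^{\frac43,1}(B_t)}$ and $\|\Delta u\,Du\|_{L^{\frac43,1}(B_t)}$, neither of which is bounded by the annulus quantities on the right-hand side. The ``Poincar\'e bookkeeping'' you identify as the main obstacle only takes care of the terms carrying derivatives of $\phi_t$; it does nothing for the solid-ball contribution coming from $\phi_t$ itself.

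The paper sidesteps this by estimating $\gamma^{ij}$ directly in $\mathcal H^1$ rather than lifting to $\alpha$. The key observation is that on $B_t$ one has $\gamma^{ij}=-(\Delta^2 u\wedge u)^{ij}=-(f\wedge u)^{ij}$, which lies in $L\log L$. Since $\gamma$ is supported in $B_{2t}$ with $\int\gamma=0$, Lemma~\ref{lphardy} gives
\[
\|\gamma\|_{\mathcal H^1}\le C\Big(t\,\|\gamma-\phi_t(u\wedge\Delta^2 u)\|_{L^{\frac43}(B_{2t})}+\|\phi_t(f\wedge u)\|_{L\log L(B_{2\delta})}+|B_{4t}|\Big);
\]
the first piece is now genuinely supported on $\Bt$ and reduces, via your Poincar\'e argument, to $\|D^2u\|_{L^2(\Bt)}+\|Du\|_{L^4(\Bt)}+\|D\Delta u\|_{L^{\frac43}(\Bt)}$. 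One then uses $\|D^2\Psi_1\|_{L^1}\le C\|\gamma\|_{\mathcal H^1}$ and the embedding $W^{1,1}\hookrightarrow L^{\frac43,1}$. In particular, the $\|f\|_{L\log L}$ term arises from $\gamma$ as well as from the torsion, and the $|B_{4\delta}|$ term is the residue of Lemma~\ref{lphardy}, not a boundary correction.
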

\begin{proof}We separate $\Psi^{ij} = \Psi^{ij}_1 + \Psi^{ij}_2$ so that 
\[
\left\{\begin{array}{ll} \Delta \Psi^{ij}_1 = \gamma^{ij}& \text{in } B_{2\delta}\\
\Psi^{ij}_1 = 0 & \text{on } \partial B_{2\delta}.
\end{array}\right.
\]

 Following classical arguments 
 \[
 \|D^2 \Psi_1^{ij}\|_{L^1(B_{2\delta})} \leq C \|\gamma^{ij}\|_{\mathcal H^1(B_{2\delta})}.
 \]
Thus the embedding theorems imply that $\|D\Psi_1^{ij}\|_{L^{\frac 43,1}(B_{2\delta})} \leq C \|\gamma^{ij}\|_{\mathcal H^1(B_{2\delta})}$. Now we consider the $\mathcal H^1$ norm of $\gamma^{ij}$. 
By definition
\begin{align*}\gamma^{ij}&= d^*\left(D\Delta (w^i-u^i)u^j-D\Delta (w^j-u^j)  u^i -\left[\Delta (w^i-u^i)Du^j-\Delta (w^j-u^j)Du^i \right]\right)\\
&=\Delta^2(w^i-u^i)u^j- \Delta^2(w^j-u^j)u^i  -( \Delta(w^i-u^i)\Delta u^j- \Delta(w^j-u^j)\Delta u^i). 
\end{align*}Recall that $w := (1-\phi_t)(u -u_1)$. So
\begin{align*}
\Delta(w^j-u^j) &= -\Delta \phi_t(u^j-u_1^j) -2D\phi_t\cdot D(u^j-u_1^j) - \phi_t \Delta u^j\\
\Delta^2(w^j-u^j)&=-\Delta^2 \phi_t (u^j-u_1^j) - \Delta\phi_t \Delta u^j  - 2 D\Delta \phi_t D(u^j-u_1^j) \\ &\qquad-2 \Delta(D\phi_t \cdot D(u^j-u_1^j))
 - \Delta \phi_t \Delta u^j -2 D\phi_t D\Delta u^j - \phi_t \Delta^2 u^j.
\end{align*}

Combining all of the terms we estimate
\begin{align*}
|\gamma^{ij}| \leq&C |D^4\phi_t|\,|u-u_1|+ C|D^3 \phi_t|\,|D(u-u_1)|+C|D^2 \phi_t|\,|D^2 u| \\
&+ C |D\phi_t| \left(|D\Delta u|+ |D(u-u_1)|\,|\Delta u|\right) + |\phi_t|\left| u^i\Delta^2 u^j-u^j\Delta^2 u^i\right| 
\end{align*}The definition of $\gamma^{ij}$ implies that 
$\gamma^{ij} = 0$ on $\Real^4 \backslash B_{2t}$ and 
\[\int_{\Real^4} \gamma^{ij}= \int_{\partial B_{2t}} \left(D\Delta(w-u)\wedge u - \Delta (w-u)\wedge Du\right)^{ij}\cdot \mathbf n =0.\]
The estimate from Lemma \ref{lphardy} implies that
\begin{align*}
\|\gamma^{ij}\|_{\mathcal H^1(B_{2\delta})} \leq &c\left(t\|\gamma^{ij}-\phi_t(u^j\Delta^2 u^i-u^i\Delta^2 u^j)\|_{L^{\frac 43}(B_{2t})} \right.\\
&\left.\qquad+ \|\phi_t(u^j\Delta^2 u^i-u^i\Delta^2 u^j)\|_{L \log L(B_{2\delta})}+|B_{4\delta}|\right).
\end{align*}
Repeating techniques used previously, we bound the first three terms of $|\gamma^{ij}|$:
\begin{align*}
t^{-4}\|u-u_1\|_{L^{\frac 43}(\Bt)} &\leq Ct^{-3}\|D( u-u_1)\|_{L^{\frac 43}(\Bt)}\\& \leq Ct^{-2}\|D^2 u\|_{L^{\frac 43}(\Bt)} \leq Ct^{-1}\|D^2 u\|_{L^2(\Bt)}.
\end{align*}
We will preserve the term
\[
t^{-1}\|D\Delta u\|_{L^{\frac 43}(\Bt)}
\]as our energy quantization result implies that this term will vanish when taking limits. H\"older's inequality and the fact that $ \|D(u-u_1)\|_{L^{4}(\Bt)} \leq C\|Du\|_{L^4(\Bt)}$ implies that
\begin{align*}
\|D(u-u_1)\Delta u\|_{L^{\frac 43}(\Bt)}&\leq C \|D(u-u_1)\|_{L^{4}(\Bt)}\|D^2 u\|_{L^{2}(\Bt)}\\
& \leq C \|D^2 u\|_{L^2(\Bt)}.
\end{align*}For the last term, since $u$ is an $f$-approximate biharmonic map into $\Ss^n$,
\[
\|\phi_t(\Delta^2 u \wedge u)\|_{L\log L(B_{2\delta})} \leq  \|f\wedge u\|_{L\log L(B_{2\delta})} \leq  \|f\|_{L\log L(B_{2\delta})}.
\]
All of the above estimates imply that
\begin{align*}
\|\gamma^{ij}\|_{\mathcal H^1(B_{2\delta})} \leq C\left(\right.&\left.\|D^2u\|_{L^2(\Bt)} + \|Du\|_{L^4(\Bt)} + \|D\Delta u\|_{L^{\frac 43}(\Bt)} \right. \\ & \qquad +  \left. \|f\|_{L\log L(B_{2\delta})}+|B_{4\delta}|\right).
\end{align*}
Finally consider 
\[
\left\{\begin{array}{ll} \Delta \Psi^{ij}_2 = f^iu^j-u^if^j& \text{in } B_{2\delta}\\
\Psi^{ij}_2 = 0 & \text{on } \partial B_{2\delta}.
\end{array}\right.
\]Then classical results give $\|\Psi^{ij}_2\|_{W^{2,1}(B_{2\delta})} \leq C \|f\|_{\mathcal H^1(B_{2\delta})} \leq C\|f\|_{L \log L(B_{2\delta})}$. Thus
\[
\|d\Psi^{ij}_2\|_{W^{1,1}(B_{2\delta})} \leq C\|f\|_{L \log L(B_{2\delta})}
\] and the embedding theorems in $\Real^4$ imply that
\[
\|d\Psi^{ij}_2\|_{L^{\frac 43,1}(B_{2\delta})} \leq C\|f\|_{L \log L(B_{2\delta})}.
\]
\end{proof}
\begin{rem}
For the intrinsic case, we define
\begin{align*}\gamma_I& = \gamma + d^*(2|Du|^2 D(w_I-u) \wedge u)\\
& = \gamma -2\phi_t d^*(|Du|^2Du\wedge u)+ 2|Du|^2 \left( \Delta \phi_t (\mathbf d-u) \wedge u-D\phi_t\cdot Du \wedge (\mathbf d +u)\right)\\
& \qquad + 2 D|Du|^2 \cdot D\phi_t (\mathbf d-u) \wedge u.
\end{align*} We bound $\|\gamma_I\|_{\mathcal H^1}$ by making the following observations. First, $-2\phi_t d^*(|Du|^2Du\wedge u)$ is added to the term $-\phi_t\Delta^2 u \wedge u$ that appears in the expansion of $\gamma$. We then make the substitution $-\phi_t f \wedge u$ as in the extrinsic case. Second, using Poincar\'e's inequality, H\"older's inequality, and the global energy bound for $u$, the $L^{\frac 43}$ norm of what remains is bounded by
$Ct^{-1}\left(\|Du\|_{L^4(\Bt)}+ \|D^2u\|_{L^2(\Bt)}\right)$. Finally, observe that by construction, $\gamma_I$ is supported on $B_{2t}$ and $\int_{\Real^4} \gamma_I = 0$ so the estimate used for $\|\gamma\|_{\mathcal H^1}$ still applies.
\end{rem}
\begin{prop}\label{phiest}
Let $\Phi^{ij}\in \Omega^2\Real^4$ be the solution to the system
\[
\left\{\begin{array}{ll} \Delta \Phi^{ij}= -2 (D\Delta w^i Du^j-D\Delta w^j Du^i)& \text{in } B_{2\delta}\\
\Phi^{ij} = 0 & \text{on } \partial B_{2\delta}.
\end{array}\right.
\]Then
\begin{equation}
\|d^* \Phi^{ij}\|_{L^{\frac 43,1}(B_{2\delta})} \leq  C\left(\|D^2 u\|_{L^2(\Bt)} + \|D\Delta u\|_{L^{\frac 43}(B_{2\delta} \backslash B_{t})}\right).
\end{equation}
\end{prop}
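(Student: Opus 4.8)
The plan is to exploit the div--curl (Jacobian) structure of the source term and then run the same elliptic regularity plus embedding argument used for $\Psi_1^{ij}$ in Proposition~\ref{psiest}. Observe first that, because $\Delta$ commutes with $d^*$, the form $d^*\Phi^{ij}$ arises from the $d^*d\beta$ part of the decomposition of $\beta^{ij}$, so $\Phi^{ij}$ solves a Poisson equation whose right--hand side is $-2\bigl(d(\Delta w^i)\wedge du^j-d(\Delta w^j)\wedge du^i\bigr)$; each component is, up to sign, the $2\times 2$ Jacobian determinant of the pair $(\Delta w^i,u^j)$ (resp.\ $(\Delta w^j,u^i)$) in a pair of coordinate directions. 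Since $D\Delta w\in L^{\frac43}$ by Proposition~\ref{thirdderivs} and the definition $w=(1-\phi_t)(u-u_1)$, while $Du\in L^4$, with $\frac34+\frac14=1$, the Coifman--Lions--Meyer--Semmes compensated--compactness estimate applies; after multiplying by a fixed cutoff supported in $B_{2\delta}$, exactly as in the treatment of $\gamma^{ij}$ above, this gives
\[
\|\Delta\Phi^{ij}\|_{\mathcal H^1(B_{2\delta})}\le C\,\|D\Delta w\|_{L^{\frac43}(B_{2\delta})}\,\|Du\|_{L^4(B_{2\delta})}.
\]

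Next I would note that $\|Du\|_{L^4(B_{2\delta})}\le\Lambda$ is absorbed into the constant, and that the estimate for $\|D\Delta w\|_{L^{\frac43}(B_{2\delta})}$ already derived in this section (using $w=(1-\phi_t)(u-u_1)$, H\"older, Poincar\'e, and the facts that $1-\phi_t\equiv 0$ on $B_t$ and $|1-\phi_t|\le 1$) yields
\[
\|D\Delta w\|_{L^{\frac43}(B_{2\delta})}\le C\bigl(\|D^2u\|_{L^2(\Bt)}+\|D\Delta u\|_{L^{\frac43}(B_{2\delta}\backslash B_t)}\bigr).
\]
Classical $\mathcal H^1$ estimates for the Dirichlet problem then give $\|\Phi^{ij}\|_{W^{2,1}(B_{2\delta})}\le C\|\Delta\Phi^{ij}\|_{\mathcal H^1(B_{2\delta})}$, and hence $\|d^*\Phi^{ij}\|_{W^{1,1}(B_{2\delta})}\le C\|\Delta\Phi^{ij}\|_{\mathcal H^1(B_{2\delta})}$. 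Finally the critical Sobolev embedding $W^{1,1}\hookrightarrow L^{\frac43,1}$ in dimension four (Appendix~\ref{estap}) upgrades this to $\|d^*\Phi^{ij}\|_{L^{\frac43,1}(B_{2\delta})}\le C\|d^*\Phi^{ij}\|_{W^{1,1}(B_{2\delta})}$. Chaining the four displays produces the claimed bound.

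The main obstacle is the first step: securing the $\mathcal H^1$ estimate. One must write each component of $d(\Delta w^i)\wedge du^j$ explicitly as a two--dimensional Jacobian determinant, check that the exponent pair $\bigl(\tfrac43,4\bigr)$ is admissible in the compensated--compactness estimate (the $L^{\frac43}$ control of $D\Delta w$ being exactly what Proposition~\ref{thirdderivs} supplies), and justify localizing the global $\mathcal H^1$ bound to $B_{2\delta}$ --- the same device already invoked for $\gamma^{ij}$ and for $f$ in Proposition~\ref{psiest}. Once this compensated estimate is in hand, the remaining steps are the routine elliptic estimate, Poincar\'e inequality, and Lorentz--Sobolev embedding used repeatedly throughout this section.
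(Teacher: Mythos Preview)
Your proposal is correct and follows essentially the same approach as the paper: bound $\|\Delta\Phi^{ij}\|_{\mathcal H^1}$ via the Coifman--Lions--Meyer--Semmes estimate applied to the Jacobian $D\Delta w\wedge Du$ with the dual exponent pair $(\tfrac43,4)$, invoke the already--derived bound on $\|D\Delta w\|_{L^{4/3}(B_{2\delta})}$, absorb $\|Du\|_{L^4}$ into the constant, and then pass from $\mathcal H^1$ data to $L^{\frac43,1}$ control of the first derivative via $W^{2,1}$ regularity and the embedding $W^{1,1}\hookrightarrow L^{\frac43,1}$ in dimension four. The paper's own proof is just a compressed three--line version of this chain.
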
 
\begin{proof}Using the same techniques and estimates as in the previous proposition we note that
\begin{align*}
\|d\Phi^{ij}\|_{L^{\frac 43,1}(B_{2\delta})} &\leq C\|D\Delta w \wedge Du\|_{\mathcal H^1(B_{2\delta})}\\
&\leq C\|D\Delta w\|_{L^{\frac 43}(B_{2\delta})}\|Du\|_{L^4(B_{2\delta})}\\
&\leq C\left(\|D^2 u\|_{L^2(\Bt)} + \|D\Delta u\|_{L^{\frac 43}(B_{2\delta} \backslash B_{t})}\right).
\end{align*}
\end{proof}
\begin{rem}
In the intrinsic setting the steps of the proof are the same, though the equation for $\Delta \Phi^{ij}_I$ includes the terms $D(|Du|^2)Dw_I \wedge u - |Du|^2 Dw_I \wedge Du$. Since $\|Dw_I\|_{L^4(B_{2\delta})} \leq C\|Du\|_{L^4(B_{2\delta} \backslash B_{t})}$, one can quickly show the intrinsic bound has the form
\[\|d^*\Phi_I\|_{L^{\frac 43,1}(B_{2\delta})}\leq \|d^*\Phi\|_{L^{\frac 43,1}(B_{2\delta})}+ C\|Du\|_{L^4(B_{2\delta} \backslash B_{t})}.
\]
\end{rem}
Now consider the harmonic one form 
\[
H^{ij} = \beta^{ij} -  d^* \Phi^{ij} - d\Psi^{ij}.
\]Propositions \ref{psiest} and \ref{phiest}, along with \eqref{betaest} imply that
\begin{align*}
\|H\|_{L^{\frac 43}(B_{2\delta})} & \leq \|\beta\|_{L^{\frac 43}(B_{2\delta})} + \|d^*\Phi\|_{L^{\frac 43}(B_{2\delta})} + \|d\Psi\|_{L^{\frac 43}(B_{2\delta})}\\
& \leq C\left( \|D^2u\|_{L^2(B_{2\delta} \backslash B_t)} + \|Du\|_{L^4(B_{2\delta} \backslash B_t)} \right.\\
& \qquad \left.+ \|D \Delta u\|_{L^{\frac 43}(B_{2\delta} \backslash B_{t})}+\|f\|_{L \log L(B_{2\delta})}+|B_{4\delta}|\right).
\end{align*}The mean value property and H\"older's inequality together imply that
\begin{align*}
\|H^{ij}\|_{C^0(B_{\delta})} &\leq \frac{C}{\delta^3}\left(  \|D^2u\|_{L^2(B_{2\delta} \backslash B_t)} + \|Du\|_{L^4(B_{2\delta} \backslash B_t)} \right. \\& \qquad \left.+ \|D \Delta u\|_{L^{\frac 43}(B_{2\delta} \backslash B_{t})}+\|f\|_{L \log L(B_{2\delta})}+|B_{4\delta}|\right).
\end{align*}
Moreover, a straightforward calculation implies that
\[
\|H^{ij}\|_{L^{\frac 43,1}(B_\delta)} \leq C\delta^{3}\|H^{ij}\|_{C^0(B_\delta)}.
\]
Thus, 
\begin{align*}
\|\beta\|_{L^{\frac 43,1}(B_\delta)} &\leq C\left(  \|D^2u\|_{L^2(B_{2\delta} \backslash B_t)} + \|Du\|_{L^4(B_{2\delta} \backslash B_t)} \right.\\
& \qquad \left.+ \|D \Delta u\|_{L^{\frac 43}(B_{2\delta} \backslash B_{t})}+\|f\|_{L \log L(B_{2\delta})}+|B_{4\delta}|\right).
\end{align*}Using the appropriate harmonic one form $H_I$, we produce the identical estimate for $\beta_I$.

We now use the definitions of $w$ and $\beta$ to determine a bound on $\|D\Delta w\|_{L^{\frac 43,1}(B_\delta)}$. First we consider the function on $B_{2t}$:
\begin{align*}
\|D\Delta w\|_{L^{\frac 43,1}(B_{2t})}&\leq \|C\left( t^{-3}|u-u_1| + t^{-2}|D(u-u_1)| + t^{-1}|D^2u|\right)\|_{L^{\frac 43,1}({\Bt})} \\& \qquad
+\| (1-\phi_t)D\Delta u\|_{L^{\frac 43,1}(B_{2t})}\\
&\leq C\|D^2 u\|_{L^2(\Bt)} + C\|D\Delta u\|_{L^{\frac 43,1}(\Bt)}.
\end{align*}

On $B_{\delta} \backslash B_{2t}$, $w=u-u_1$ so $D\Delta w \equiv D\Delta u$. We first decompose $D\Delta u$ into tangential and normal parts with tangency relative to the target manifold $\Ss^n$. Then
\[
D\Delta u = D\Delta u^T + D\Delta u^N = D\Delta u \wedge u.u + \langle D\Delta u, u\rangle u.
\]Here we define $\langle Dv, u\rangle:= \sum_{i,k} \frac{\partial v^k}{\partial x_i} u^k dx_i$. 
On $B_{\delta}\backslash B_{2t}$, $D\Delta u \wedge u = \beta + \Delta u \wedge Du$ and thus
\[
|(D\Delta u)^T| \leq |\beta| + |\Delta u|\,|Du|.
\]Since
\begin{align*}
\langle D\Delta u, u\rangle &= D\langle \Delta u, u\rangle -\langle\Delta u, Du\rangle\\ &= D( d^*\langle Du,u\rangle-|Du|^2)-\langle \Delta u, Du\rangle = -D|Du|^2 - \langle\Delta u, Du\rangle
\end{align*}we estimate

\begin{align*}
\|D\Delta w\|_{L^{\frac 43,1}(B_{\delta}\backslash B_{2t})} &\leq C\|\beta\|_{L^{\frac 43,1}(B_{\delta})} +C \|D^2 u\|_{L^2(B_{\delta}\backslash B_{2t})} \|Du\|_{L^4(B_{\delta}\backslash B_{2t})}\\
&\leq C\left(  \|D^2u\|_{L^2(B_{2\delta} \backslash B_t)} + \|Du\|_{L^4(B_{2\delta} \backslash B_t)} + \|D \Delta u\|_{L^{\frac 43}(B_{2\delta} \backslash B_{t})}\right.\\
&\qquad + \left. \|f\|_{L \log L(B_{2\delta})}+|B_{4\delta}|\right).
\end{align*}
Thus, 
\begin{align*}
\|D\Delta w\|_{L^{\frac 43,1}(B_\delta)} \leq &C\left( \|D^2u\|_{L^2(B_{2\delta} \backslash B_t)} + \|Du\|_{L^4(B_{2\delta} \backslash B_t)} +\|f\|_{L \log L(B_{2\delta})} \right.\\
&\qquad \left. +\|D \Delta u\|_{L^{\frac 43,1}(B_{2t} \backslash B_{t})} +\|D \Delta u\|_{L^{\frac 43}(B_{2\delta} \backslash B_{t})}+|B_{4\delta}| \right).
\end{align*}
Inserting this inequality into \eqref{oscest} proves the oscillation lemma.
\begin{rem}
To complete the proof in the intrinsic case, observe that on $B_{\delta} \backslash B_{2t}$, $D\Delta w \wedge u = D\Delta u \wedge u = \beta + \Delta u \wedge Du+ 2|Du|^2 Du \wedge u$. This changes the $L^\infty$ estimate for $|(D\Delta u)^T|$ on $B_{\delta} \backslash B_{2t}$ but using embedding theorems for Lorentz spaces we note that the $L^{\frac 43,1}$ estimate is unchanged.
\end{rem}
\section{No neck property -- Proof of Theorem \ref{neckthm}}\label{no_neck_section}

The proof of the no neck property now follows easily from combining the energy quantization and the oscillation bounds.

\begin{proof}As we may use induction to deal with the case of multiple bubbles, we prove the theorem for one bubble. 
Let $\lambda_k$ be such that $\tilde u_k(x):= u_k(\lambda_k x) \to \omega(x) \in W^{2,2}_{loc}(\Real^4, \Ss^n)$. Since each of the $u_k \in W^{2,2}(B_1, \Ss^n)$ are $f_k$-approximate biharmonic maps with $f_k \in L \log L(B_1, \Real^{n+1})$ and have uniform energy bounds, Lemma \ref{osclemma} implies that
\begin{align*}
\sup_{x,y \in B_{\delta/2}\backslash B_{2\lambda_kR}}\left|u_k(x)-u_k(y)\right| & \leq C\left( \|D^2u_k\|_{L^2(B_{2\delta} \backslash B_{{\lambda_kR}/{2}})}+\|Du_k\|_{L^4(B_{2\delta} \backslash B_{{\lambda_kR}/{2}})} \right. \\ & \qquad \left. +\|f_k\|_{L \log L(B_{2\delta})}+ \|D \Delta u_k\|_{L^{\frac 43,1}(B_{\lambda_kR} \backslash B_{{\lambda_kR}/{2}})}\right.\\
& \qquad \left. + \|D \Delta u_k\|_{L^{\frac 43}(B_{2\delta} \backslash B_{{\lambda_kR/2}})}+|B_{4\delta}|\right).
\end{align*}

Theorem \ref{bubblethm} implies that
\[
\lim_{\substack{\delta\to 0}}\lim_{\substack{R\to \infty}}\lim_{\substack{k\to \infty}}\left(\|D^2u_k\|_{L^2(B_{2\delta} \backslash B_{{\lambda_kR}/{2}})}+\|Du_k\|_{L^4(B_{2\delta} \backslash B_{{\lambda_kR}/{2}})} + \|D \Delta u_k\|_{L^{\frac 43}(B_{2\delta} \backslash B_{{\lambda_kR/2}})}\right)=0.
\]
Further, \eqref{epsreg1} and H\"older's inequality imply that
\begin{align*}
\|D\Delta u_k\|_{L^{\frac 43,1}(B_{\lambda_kR} \backslash B_{{\lambda_kR}/{2}})} \leq C&\left( \|D u_k\|_{L^{4}((B_{2\lambda_kR} \backslash B_{{\lambda_kR}/{4}})}\right.\\
& \left.+\|D^2u_k\|_{L^{2}((B_{2\lambda_kR} \backslash B_{{\lambda_kR}/{4}})}+ \|f_k\|_{L\log L(B_{2\lambda_kR})}\right).
\end{align*}Since we presume the $L \log L$ norm of $f_k$ does not concentrate
\[
 \lim_{\substack{\delta\to 0}}\lim_{\substack{R\to \infty}}\lim_{\substack{k\to \infty}}\|f_k\|_{L \log L(B_{2\delta})} =0.
\]

Therefore, 
\[
 \lim_{\substack{\delta\to 0}}\lim_{\substack{R\to \infty}}\lim_{\substack{k\to \infty}}\|D \Delta u_k\|_{L^{\frac 43,1}(B_{\lambda_kR} \backslash B_{{\lambda_kR}/{2}})}=0.
\]Taking all of the estimates together implies that
\[
 \lim_{\substack{\delta\to 0}}\lim_{\substack{R\to \infty}}\lim_{\substack{k\to \infty}}\sup_{x,y \in B_{\delta/2}\backslash B_{2\lambda_kR}}\left|u_k(x)-u_k(y)\right| =0.
\]Thus, no neck occurs in the blowup.
\end{proof}
\begin{rem}
For $f_k \in \phi(L)$, we use the following estimate
\begin{align*}
\|f_k\|_{L \log L(B_{2\delta})} &= \int_{B_{2\delta} \cap \{|f_k|\leq \delta^{-1}\}} |f_k|\log(2+ |f_k|) dx + \int_{|f_k|> \delta^{-1}}|f_k|\log(2+ |f_k|) dx \\
&\leq C \delta^3 \log(2+ \delta^{-1}) + \sup_{\substack{t>\delta^{-1}}}\frac{t \log (2+t)}{\phi(t)} \int_{|f_k|> \delta^{-1}}\phi(|f_k|) dx\\
& \leq C\delta^3 \log(2+ \delta^{-1}) + \sup_{\substack{t>\delta^{-1}}}\frac{t \log (2+t)}{\phi(t)} \Lambda.
\end{align*}Since we presumed $\lim_{t \to \infty} \frac {\phi(t)}{t\log t} =\infty$ we determine
\[
\lim_{\delta \to 0} \sup_k \|f_k\|_{L \log L(B_{2\delta})}=0.
\]
\end{rem}

\appendix
\section{Necessary Background}
\subsection{Hardy Spaces, Lorentz Spaces, $L \log L$, and Orlicz Spaces}

Let $T:=\{ \Phi \in C^\infty(\Real^4) | \spt(\Phi) \subset B_1, \|\nabla \Phi\|_{L^\infty(\Real^4)} \leq 1\}$. For any $\Phi \in T$, let $\Phi_t(x):= t^{-4}\Phi\left(\frac xt\right)$. For each $f \in L^1(\Real^4)$, let
\[
f_*(x)= \sup_{\Phi \in T}\sup_{t>0}\left|(\Phi_t \ast f)(x)\right|.
\] Then $f$ is in the \emph{Hardy space} $\mathcal H^1(\Real^4)$ if $f_* \in L^1(\Real^4)$ and
\[
\|f\|_{\mathcal H^1(\Real^4)} = \|f_*\|_{L^1(\Real^4)}.
\]Thus, one has the continuous embedding $\mathcal H^1 \hookrightarrow L^1$.

For a measurable function $f:\Omega \to \Real$, let $f^*$ denote the non-increasing rearrangement of $|f|$ on $[0, |\Omega|)$ such that
\[
\left|\{x \in \Omega | \, |f(x)|\geq s\}\right| = \left| \{t \in (0, |\Omega|) | \, f^*(t) \geq s\}\right|.
\]
Let 
\[
f^{**}(t) := \frac 1t \int_0^t f^*(s) ds.
\]For $p \in (1, \infty)$, let
\begin{equation*}
\|f\|_{L^{p,q}}=\left\{ \begin{array}{ll}
\int_0^\infty t^{\frac 1p-1} f^{**}(t) dt,& \text{ if } q=1,\\
\sup_{t>0} t^{\frac 1p}f^{**}(t), & \text{ if } q = \infty.
\end{array}\right.
\end{equation*}We will also occasionally exploit the fact that one may understand $\|f\|_{L^{p, \infty}}$ by understanding instead its semi-norm
\[
\|f\|_{L^{p,\infty}}^*:= \sup_{\lambda>0}\lambda\left|\{x: |f(x)|> \lambda\}\right|^{\frac 1p}.
\]
We define the Banach spaces 
\[
L^{p,q}:= \{ f | \, \|f\|_{L^{p,q} } < \infty\}.
\]The spaces $L^{p,1}, L^{p,\infty}$ are examples of \emph{Lorentz spaces} and can be thought of as interpolation spaces between the standard $L^p$ spaces. For example, one observes that the following embeddings are all continuous:
\[
L^r(B_1) \hookrightarrow L^{p,1}(B_1) \hookrightarrow L^{p,p}(B_1)=L^p(B_1) \hookrightarrow L^{p,\infty}(B_1) \hookrightarrow L^q(B_1)
\]for all $q<p<r$, \cite{Helein}.

We define
\[
L \log L:= \{ f | \, \int |f(x)| \log (2 +|f(x)|)dx < \infty\}.
\]Since the above is non-linear, we will use the following semi-norm which is equivalent to the norm for $L\log L$
\[
\|f\|_{L \log L} := \int f^*(t) \log (2 + \frac 1t) dt.
\]
We also note that $L^p(B_1) \hookrightarrow L \log L(B_1) \hookrightarrow L^1(B_1)$ are continuous embeddings for all $p>1$. Finally, we say $f$ is in $\mathcal H^1(B_1)$ if
\[
\left(f - \mint_{B_1} f(x)\,dx\right) \chi_{B_1} \in \mathcal H^1(\Real^4).
\]We record here the often used estimate
\begin{equation}
\|f\|_{\mathcal H^1(B_1)} \leq C \|f \|_{L \log L(B_1)}.
\end{equation}

Finally, for any increasing function $\phi:[0, \infty) \to [0, \infty)$ we define the Orlicz space
\[
\phi(L):= \{ f | \,\|f\|_{\phi(L)}:=\int \phi(|f|) dx < \infty\}. 
\]Examples include the $L^p$ spaces for $\phi(t)=t^p$ and $L \log L$ when $\phi(t) = t \log(2+t)$.

\subsection{Embeddings and Estimates for Lorentz spaces}\label{estap}
We will frequently use the following facts about Lorentz spaces:
\begin{enumerate}
\item $L^{p,q} \cdot L^{p',q'}$ continuously embeds into $L^{r,s}$ for $\frac 1p+ \frac 1{p'} \leq 1$ where
\[
\frac 1r = \frac 1p + \frac 1{p'}, \quad \frac 1s = \frac 1q + \frac 1{q'}
\]with
\[
\|fg\|_{L^{r,s}} \leq C\|f\|_{L^{p,q}} \|g\|_{L^{p',q'}}.
\]
\item For $f \in L^2, g \in W^{1,2}$
\[
\|fg\|_{L^{{\frac 43},1}} \leq C \|f\|_{L^2}\|g\|_{W^{1,2}}.
\]
\item \label{emb1} $W^{1,1}(\Real^4) \hookrightarrow L^{{\frac 43},1}(\Real^4)$ and $W^{1,2}(\Real^4) \hookrightarrow L^{4,2}(\Real^4)$ are continuous embeddings.
\item $L^{2,1}$ and $L^{2,\infty}$ are dual spaces, as are $L^{4,\infty}$, $L^{{\frac 43},1}$ and $L^{4,1}$, $L^{{\frac 43},\infty}$.
\item \label{Grafakos} For all $0<p,r<\infty$ and $0< q \leq \infty$, (see \cite{Grafakos}, Section 1.4.2)
\[\|f^r\|_{L^{p,q}}= \|f\|^r_{L^{pr,qr}}.\]

\item \label{Ziemer}Let $f \in L^{p,q}(\Real^4), g \in L^{p',q'}(\Real^4)$ with $\frac 1p + \frac 1{p'} >1$. Then $h= f \ast g \in L^{r,s}(\Real^4)$ where $\frac 1r = \frac 1p + \frac 1{p'} -1$ and $s$ is a number such that $\frac 1q + \frac 1{q'} \geq \frac 1s$. Moreover,
\[
\|h\|_{L^{r,s}(\Real^4)} \leq c \|f\|_{L^{p,q}(\Real^4)} \|g\|_{L^{p',q'}(\Real^4)}.
\]For a proof, see \cite{Ziemer}.

\vskip .1in
Let 
$G$ be the distribution such that $\Delta^2 G = \delta_0$. Then, $D^2G \in L^{2,\infty}(\Real^4)$ and $D^3G \in L^{{\frac 43},\infty}(\Real^4)$. Moreover, $D G \in L^{4,\infty}(\Real^4)$.

Using \eqref{Ziemer}, and considering $D^2G, D^3G$ as operators by convolution,

\item $D^2 G:L^{{\frac 43},1}(\Real^4) \to L^{4,1}(\Real^4)$, $D^3G:L^{{\frac 43},1}(\Real^4) \to L^{2,1}(\Real^4)$ are bounded operators.

\end{enumerate}

\subsection{Scaling and estimates for $L\log L$ and $\mathcal H^1$}\label{scalingappend}
We first prove an essential but technical lemma that is probably well known, though we have not found a reference in the literature. (We prove the lemma for our particular setting though a more general result is true.)
\begin{lem}\label{lphardy}
Let $f=f_1+f_2$ where $f_1\in L^{\frac 43}(B_R)$ and $f_2 \in L\log L(B_R)$ be a compactly supported function with $\spt(f) \subset B_R$ and $\int_{\Real^4} f(x)dx =0$. Then $f\in \mathcal{H}^1(B_R)$ and there exists $C>0$ such that
\begin{equation}\label{HLest}
\|f\|_{\mathcal H^1(B_R)} \leq C\left(R\|f_1\|_{L^{\frac 43}(B_{R})}+ \|f_2\|_{L\log L(B_R)}+ |B_{2R}|\right).
\end{equation}
\end{lem}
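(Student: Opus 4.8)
The plan is to reduce the estimate \eqref{HLest} to the two classical facts recorded in the appendix: that $\mathcal{H}^1(B_R)$ (after subtracting the mean and extending by zero) sits inside $L^1$, and the two standard inclusions $L^p(B_R) \hookrightarrow \mathcal{H}^1(B_R)$ for $p>1$ and $L\log L(B_R) \hookrightarrow \mathcal{H}^1(B_R)$, together with the scaling behaviour of the Hardy norm. First I would dispose of the mean-value issue: since $\int_{\Real^4} f = 0$ and $\spt f \subset B_R$, the function $f$ already has the correct cancellation, so $f = (f - \mint_{B_R} f)\chi_{B_R} + (\mint_{B_R} f)\chi_{B_R}$, and the second term contributes only $|\mint_{B_R} f|\,\|\chi_{B_R}\|_{\mathcal H^1}$, which is controlled by $\|f\|_{L^1(B_R)}$ times a dimensional constant, i.e. absorbed into the $R\|f_1\|_{L^{4/3}}+|B_{2R}|$ terms by H\"older. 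So it suffices to bound $\|(f-\mint f)\chi_{B_R}\|_{\mathcal H^1(\Real^4)}$, and by linearity of the Hardy norm it suffices to handle $f_1$ and $f_2$ separately, each after subtracting its own mean over $B_R$.

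Next I would handle the scaling. Rescale to the unit ball by setting $\tilde f_i(x) := R^4 f_i(Rx)$ (or the analogous normalization that preserves $L^1$ mass), so that $\spt \tilde f_i \subset B_1$. A direct computation with the definition of $f_*$ and the test functions $\Phi_t$ shows $\|f_i\|_{\mathcal H^1(B_R)}$ equals $\|\tilde f_i\|_{\mathcal H^1(B_1)}$ up to the natural power of $R$; one tracks how $\|f_1\|_{L^{4/3}}$ and $\|f_2\|_{L\log L}$ transform under the same rescaling and checks that the powers of $R$ match those on the right-hand side of \eqref{HLest} (this is where the factor $R$ in front of $\|f_1\|_{L^{4/3}}$ and the constant $|B_{2R}|$ appear). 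After this reduction one only needs the estimate on $B_1$.

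On $B_1$, I would invoke the two embeddings directly: $\|(\tilde f_1 - \mint_{B_1}\tilde f_1)\chi_{B_1}\|_{\mathcal H^1(\Real^4)} \leq C\|\tilde f_1\|_{L^{4/3}(B_1)}$ and $\|(\tilde f_2 - \mint_{B_1}\tilde f_2)\chi_{B_1}\|_{\mathcal H^1(\Real^4)} \leq C\|\tilde f_2\|_{L\log L(B_1)}$, the latter being exactly the recorded inequality $\|f\|_{\mathcal H^1(B_1)} \leq C\|f\|_{L\log L(B_1)}$. Summing the two pieces, undoing the rescaling, and collecting the mean-value contributions into the $|B_{2R}|$ term gives \eqref{HLest}.

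The main obstacle, and the reason the lemma is stated rather than quoted, is the bookkeeping in the scaling step combined with the fact that the decomposition $f = f_1 + f_2$ is not canonical and the two pieces scale with different homogeneities: one must verify that subtracting means does not destroy the compact support/cancellation structure needed to apply the $B_1$ embeddings, and that the error terms genuinely fit under $R\|f_1\|_{L^{4/3}} + \|f_2\|_{L\log L} + |B_{2R}|$ rather than requiring, say, an $\|f\|_{L^1}$ term with the wrong power of $R$. Everything else is a routine assembly of facts already collected in Appendix \ref{estap}.
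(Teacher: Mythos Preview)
Your reduction is correct but takes a genuinely different route from the paper. The paper argues directly: it splits $\|f_*\|_{L^1(\Real^4)}$ into the contribution over $B_{2R}$ and over $\Real^4\setminus B_{2R}$. On $B_{2R}$ it uses $f_*\le cMf$, the Hardy--Littlewood maximal inequality $\|Mf_1\|_{L^{4/3}}\le c\|f_1\|_{L^{4/3}}$ together with H\"older to produce $R\|f_1\|_{L^{4/3}}$, and Stein's inequality $\|Mf_2\|_{L^1(B_{2R})}\le c\|f_2\|_{L\log L}+c|B_{2R}|$ for the second piece (this is where the $|B_{2R}|$ term actually enters, not from scaling as you suggest). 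On $\Real^4\setminus B_{2R}$ it exploits the cancellation $\int f=0$ via a first-order Taylor expansion of $\Phi_t$ to get the pointwise decay $f_*(x)\lesssim R|x|^{-5}\|f\|_{L^1}$, which is integrable. Your argument instead packages all of this into the single recorded embedding $L\log L(B_1)\hookrightarrow\mathcal H^1(B_1)$ and transports it to $B_R$ by the $L^1$-preserving rescaling $\tilde f(x)=R^4 f(Rx)$; the factor $R$ on $\|f_1\|_{L^{4/3}}$ then falls out of $\|\tilde f_1\|_{L^{4/3}(B_1)}=R\|f_1\|_{L^{4/3}(B_R)}$, and Lemma~\ref{fLlogLlemma} handles $\|\tilde f_2\|_{L\log L(B_1)}\le C\|f_2\|_{L\log L(B_R)}$. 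Two small corrections: the $L^{4/3}(B_1)\hookrightarrow\mathcal H^1(B_1)$ embedding is not recorded in the appendix --- you must route it through $L^{4/3}(B_1)\hookrightarrow L\log L(B_1)$ --- and your scaling step only works verbatim for $R\le 1/2$ since Lemma~\ref{fLlogLlemma} is stated under that restriction. This suffices for every application in the paper, and in that range your argument even drops the $|B_{2R}|$ term; the paper's direct proof, by contrast, is self-contained and valid for all $R$.
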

\begin{proof}
First note that
\begin{align}
\|f_*\|_{L^1} = \int_{B_{2R}}f_*(x)dx +\int_{\Real^4 \backslash B_{2R}} f_* (x)dx. \label{lphardy1}
\end{align}
Since $f_1\in L^{\frac 43}(\Real^4)$ and $f_2 \in L\log L(\Real^4)$ we see that $f\in L^1_{{loc}}(\Real^4)$ and therefore $f_*(x)\le cMf(x)$ for every $x\in \Real^4$. Here $Mf:\Real^4 \to \Real$ is the maximal function defined by
\[
Mf(x)=\sup_{r>0} \frac{1}{|B_r(x)|} \int_{B_r(x)} |f(y)| dy.
\]Using the above, H\"older's inequality and the estimates $\|Mf_1\|_{L^{\frac 43}} \leq c \|f_1\|_{L^{\frac 43}}$, $\|Mf_2\|_{L^1(B_{2R})} \leq c \|f_2\|_{L \log L(B_{2R})}+c|B_{2R}|$,
\begin{align}
\int_{B_{2R}}f_*(x)dx
\le &cR\|(f_1)_*\|_{L^{\frac 43}} + \|(f_2)_*\|_{L^1} \nonumber \\
\le& c R\|Mf_1\|_{L^{\frac 43}}+ c\|Mf_2\|_{L^1}\nonumber \\
\le& cR\|f_1\|_{L^{\frac 43}}+c\|f_2\|_{L\log L}+c|B_{2R}|. \label{lphardy2}
\end{align}
Now we calculate for $\phi \in T$ and $x\in \Real^4$
\begin{align*} 
|\phi_t \star f(x)| &= |\int_{B_R} \phi_t(x-y)f(y)dy|\\
&= |\int_{B_R} (\phi_t(x-y)-\phi_t(x))f(y) dy| \\
&\le \|\nabla \phi_t\|_{L^\infty}\int_{B_R} |y| |f(y)|dy,
\end{align*}
where we used the mean value theorem and the cancellation property $\int_{\Real^4}f(y)dy=0$. Since $\|\nabla \phi_t\|_{L^\infty}\le \frac{1}{t^{5}}$, for $t>0$, we estimate
\begin{align}
|\phi_t \star f(x)| &\le \frac{R}{t^{5}}\int_{B_R} |f(y)|dy \nonumber \\
&\le \frac{cR^{2}}{t^{5}}\|f_1\|_{L^{\frac 43}} + \frac{cR}{t^{5}}\|f_2\|_{L\log L}. \label{lphardy3}
\end{align}
Assuming now that $|x|\ge 2R$ we can apply a technical result to get
\begin{align}
f_*(x)&= \sup_{\phi \in T} \sup_{t> \frac{|x|}{2}}|\phi_t \star f(x)| \nonumber \\
&\le \frac{cR^{2}}{|x|^{5}}\|f_1\|_{L^{\frac 43}}+\frac{cR}{|x|^{5}}\|f_2\|_{L\log L}. \label{lphardy4}
\end{align}
Inserting \eqref{lphardy2} and \eqref{lphardy4} into \eqref{lphardy1} we conclude
\begin{align}
\|f_*\|_{L^1} &\le cR\|f_1\|_{L^{\frac 43}}+ +c\|f_2\|_{L\log L}+c|B_{2R}|\\
& \quad \quad +\left(cR^{2}\|f_1\|_{L^{\frac 43}}+cR\|f_2\|_{L\log L}\right)\int_{\Real^4 \backslash B_{2R}} \frac{1}{|x|^{5}}dx\nonumber \\
&\le  cR\|f_1\|_{L^{\frac 43}}+ c\|f_2\|_{L \log L}+c|B_{2R}|. \label{lphardy5}
\end{align} 
\end{proof}

We also note two important inequalities (with proofs following those of \cite{ST}).
\begin{lem}\label{fL1lemma}
Let $f\in L\log L(B_r(x_0))$ for $r \in (0,1/2]$. There exists $C>0$ such that
\begin{equation}
\|f\|_{L^1(B_r(x_0))} \leq C \left(\log\left(\frac 1r\right)\right)^{-1} \|f\|_{L \log L(B_r(x_0))}.
\end{equation}
\end{lem}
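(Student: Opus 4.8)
The plan is to compare the two quantities directly via the non-increasing rearrangement, using the integral representations recorded in the appendix. Set $c_0:=|B_1|$, the volume of the unit ball in $\Real^4$, so that $|B_r(x_0)|=c_0r^4$, and let $f^*$ denote the non-increasing rearrangement of $|f|$ on $[0,c_0r^4)$. Since rearrangement preserves the $L^1$ norm and $\|\cdot\|_{L\log L}$ is given by the stated weighted integral of $f^*$, we have
\[
\|f\|_{L^1(B_r(x_0))}=\int_0^{c_0r^4}f^*(t)\,dt,\qquad
\|f\|_{L\log L(B_r(x_0))}=\int_0^{c_0r^4}f^*(t)\,\log\!\Bigl(2+\tfrac1t\Bigr)\,dt.
\]
So the lemma reduces to bounding the first integral by $C(\log(1/r))^{-1}$ times the second.

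First I would establish a pointwise lower bound on the weight over the range of integration. Because $r\le\tfrac12$ forces $c_0r^4\le c_0/16<1$, every $t\in(0,c_0r^4)$ satisfies $1/t>(c_0r^4)^{-1}>1$, and hence
\[
\log\!\Bigl(2+\tfrac1t\Bigr)\ \ge\ \log\tfrac1t\ >\ \log\tfrac1{c_0r^4}\ =\ 4\log\tfrac1r-\log c_0\ \ge\ c\,\log\tfrac1r
\]
for a universal constant $c>0$ (for instance $c=1$), where the last step absorbs the additive term $-\log c_0$ using $\log(1/r)\ge\log 2$. Multiplying this inequality by $f^*(t)\ge0$ and integrating over $(0,c_0r^4)$ gives
\[
\|f\|_{L\log L(B_r(x_0))}\ \ge\ c\,\log\tfrac1r\int_0^{c_0r^4}f^*(t)\,dt\ =\ c\,\log\tfrac1r\,\|f\|_{L^1(B_r(x_0))},
\]
and dividing by $c\log(1/r)>0$ --- legitimate exactly because $r\le\tfrac12$ --- yields the assertion with $C=c^{-1}$.

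I do not expect a genuine obstacle in this argument; it is a direct manipulation of the rearrangement formulas. The only points requiring care are the dimensional constant $c_0$ entering through $|B_r(x_0)|=c_0r^4$ and the fact that $\log(1/r)$ must stay bounded away from $0$ so that neither $\log c_0$ nor the ``$+2$'' inside the logarithm weakens the lower bound; both are taken care of by the standing hypothesis $r\le\tfrac12$. Replacing $4$ by the ambient dimension gives the same statement in any $\Real^N$.
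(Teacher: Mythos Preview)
Your proof is correct and follows essentially the same approach as the paper: both arguments compare the weight $\log(2+1/t)$ on $(0,|B_r|)$ against a constant multiple of $\log(1/r)$ and then integrate against $f^*$. The paper reaches this comparison via a change of variables $s=r^4t$ and an auxiliary inequality $\tfrac{2}{r^4}+\tfrac{1}{s}\le(2+\tfrac{1}{s})^C$, whereas you obtain it by the direct pointwise lower bound $\log(2+1/t)\ge\log(1/t)\ge 4\log(1/r)-\log c_0\ge c\log(1/r)$; your route is slightly more streamlined but the underlying idea is identical.
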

\begin{proof}
Start by observing that
\begin{align*}
0 & \leq r^4 \int_0^{|B_1|} f^*(r^4 t) \log\left(2 + \frac 1t\right) dt\\
&= \int_0^{|B_r(x_0)|}f^*(s) \log \left(2 + \frac{r^4}s\right) ds\\
&= \int_0^{|B_r(x_0)|}f^*(s)\log(r^4) ds + \int_0^{|B_r(x_0)|}f^*(s) \log \left(\frac{2}{r^4} + \frac 1s\right) ds\\
& \leq -4 \log\left(\frac 1r\right)\|f\|_{L^1(B_r(x_0))} + C \|f\|_{L \log L(B_r(x_0))}.
\end{align*}The last inequality follows from the fact that there exists a fixed $C$ such that
\[
\frac {2}{r^4} + \frac 1s \leq \frac{2\omega_4 + 1}{s} \leq \left(2+ \frac 1s\right)^C
\]for all $s \leq \omega_4 r^4$. 
\end{proof}

Let $u$ be an $f$-approximate biharmonic map on $B_1$ with $f \in L\log L(B_1)$. For $x_0 \in B_1$ and $R >0$ such that $B_R(x_0) \subset B_1$ define $\hat u(x):= u(x_0 + Rx), \hat f(x) := R^4 f(x_0 + Rx)$. Then $\hat u$ is an $\hat f$-approximate biharmonic map. Moreover, we note that for any $r \in (0,1)$, $p\geq 1$, and $k=1,2,3$,
\begin{enumerate}
\item $\|D^k\hat u\|_{L^{4/k}(B_r)}=\|D^ku\|_{L^{4/k}(B_{rR}(x_0))}$.
\item $\|\hat f\|_{L^p(B_r)} = R^{4(1-1/p)}\|f\|_{L^p(B_{rR}(x_0))}$.
\end{enumerate}
\begin{lem}\label{fLlogLlemma}
Let $f \in L\log L(B_r(x_0))$ where $r \in(0, 1/2]$ and define $\hat f(x):= r^4f(x_0 + rx)$. Then there exists $C>0$ such that
\[
\|\hat f\|_{L \log L(B_1)} \leq C \|f\|_{L \log L(B_r(x_0))}.
\]
\end{lem}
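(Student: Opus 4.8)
The plan is to reduce the statement to an explicit computation of the non‑increasing rearrangement of $\hat f$ in terms of that of $f$, after which the desired inequality drops out of a single change of variables together with the monotonicity of $\log$. Since $\hat f(x) = r^4 f(x_0 + rx)$ and the affine map $y = x_0 + rx$ carries $B_1$ onto $B_r(x_0)$ with Jacobian $r^4$ in $\Real^4$, the first step is to record that for every $\lambda > 0$
\[
\bigl|\{x \in B_1 : |\hat f(x)| > \lambda\}\bigr| = r^{-4}\bigl|\{y \in B_r(x_0) : |f(y)| > r^{-4}\lambda\}\bigr|.
\]

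From this identity I would deduce the pointwise rearrangement formula $\hat f^*(t) = r^4\, f^*(r^4 t)$ for $t \in [0,|B_1|)$. This is the one spot that deserves a little care: I would verify it by unwinding the infima in $f^*(t) = \inf\{\lambda \ge 0 : |\{|f|>\lambda\}| \le t\}$ and $\hat f^*(t) = \inf\{\lambda \ge 0 : |\{|\hat f|>\lambda\}| \le t\}$, substituting the distribution‑function relation above, and tracking the scaling factors $r^{\pm 4}$ so that they land in the right places.

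Plugging this into the $L\log L$ semi‑norm and substituting $s = r^4 t$ (noting $r^4|B_1| = |B_r(x_0)|$ and $1/t = r^4/s$) gives
\[
\|\hat f\|_{L\log L(B_1)}^* = \int_0^{|B_1|} r^4 f^*(r^4 t)\,\log\!\Bigl(2 + \tfrac1t\Bigr)\,dt = \int_0^{|B_r(x_0)|} f^*(s)\,\log\!\Bigl(2 + \tfrac{r^4}{s}\Bigr)\,ds.
\]
Since $r \le \tfrac12 < 1$ we have $r^4 \le 1$, hence $\log(2 + r^4/s) \le \log(2 + 1/s)$ for all $s > 0$, and the right‑hand side is bounded by $\|f\|_{L\log L(B_r(x_0))}^*$; in fact $C = 1$ works.

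I do not anticipate a genuine obstacle: the proof is a bookkeeping exercise, with the only subtlety being the derivation of $\hat f^*(t) = r^4 f^*(r^4 t)$. It is worth noting that the hypothesis $r \le 1/2$ (really just $r \le 1$) is precisely what makes the constant universal — for $r > 1$ the same computation would only yield a constant depending on $r$ through the term $\log(2 + r^4/s)$.
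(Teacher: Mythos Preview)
Your proof is correct and follows essentially the same route as the paper: establish the rearrangement identity $\hat f^*(t) = r^4 f^*(r^4 t)$, change variables $s = r^4 t$, and use $r^4 \le 1$ to conclude with $C = 1$. The only difference is that you spell out the distribution-function computation behind the rearrangement identity, which the paper simply asserts.
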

\begin{proof}
First note that using the definition of $\hat f$, one can immediately show that $\hat f^*(t) = r^4 f^*(r^4t)$. Thus,
\begin{align*}
\int_0^{|B_1|} \hat f^*(t) \log \left(2 + \frac 1t\right) dt &= \int_0^{|B_1|} r^4 f^*(r^4t) \log\left(2 + \frac 1t\right) dt\\
&= \int_0^{|B_r(x_0)|} f^*(s) \log\left(2 + \frac{r^4}{s}\right) ds\\
& \leq \int_0^{|B_r(x_0)|} f^*(s) \log\left(2 + \frac{1}{s}\right) ds.
\end{align*}
\end{proof}
\bibliographystyle{amsplain}
\bibliography{Biblio}

\end{document}